
\pdfoutput=1
\documentclass[11pt,oneside]{amsart}
\usepackage[
paper=a4paper,
headsep=20pt,text={136mm,208mm},centering,includehead
]{geometry}
\usepackage{graphicx}
\usepackage[dvipsnames,usenames]{xcolor}
\usepackage[colorlinks=true,urlcolor=ForestGreen,linkcolor=ForestGreen,citecolor=ForestGreen]{hyperref}
\hypersetup{
	linkbordercolor={1 0 0}, 
	citebordercolor={0 1 0} 
}
\usepackage{amssymb,mathtools,mathrsfs,stmaryrd,wasysym,bbm}
\usepackage[UKenglish]{babel}
\usepackage[noabbrev]{cleveref}
\usepackage{tikz}
\usetikzlibrary{trees,decorations.pathmorphing,decorations.markings,matrix,shapes}
\usepackage[backgroundcolor=green,linecolor=green,bordercolor=green]{todonotes}
\usepackage[shortlabels]{enumitem}
\usepackage{float}

\usepackage{libertine}
\usepackage[vvarbb,bigdelims,cmintegrals,libertine]{newtxmath}


\iftrue
\makeatletter
\def\@settitle{%
	\vspace*{-20pt}
	\begin{flushleft}%
		\baselineskip14\p@\relax
		\normalfont\bfseries\LARGE
		\@title
	\end{flushleft}%
}
\def\@setauthors{%
	\begingroup
	\def\thanks{\protect\thanks@warning}%
	\trivlist
	\large \@topsep30\p@\relax
	\advance\@topsep by -\baselineskip
	\item\relax
	\author@andify\authors
	\def\\{\protect\linebreak}%
	\authors
	\ifx\@empty\contribs
	\else
	,\penalty-3 \space \@setcontribs
	\@closetoccontribs
	\fi
	\normalfont
	\@setaddresses
	\endtrivlist
	\endgroup
}
\def\@setaddresses{\par
	\nobreak \begingroup\raggedright
	\small
	\def\author##1{\nobreak\addvspace\smallskipamount}%
	\def\\{\unskip, \ignorespaces}%
	\interlinepenalty\@M
	\def\address##1##2{\begingroup
		\par\addvspace\bigskipamount\noindent
		\@ifnotempty{##1}{(\ignorespaces##1\unskip) }%
		{\ignorespaces##2}\par\endgroup}%
	\def\curraddr##1##2{\begingroup
		\@ifnotempty{##2}{\nobreak\noindent\curraddrname
			\@ifnotempty{##1}{, \ignorespaces##1\unskip}\/:\space
			##2\par}\endgroup}%
	\def\email##1##2{\begingroup
		\@ifnotempty{##2}{\smallskip\nobreak\noindent E-mail address%
			\@ifnotempty{##1}{, \ignorespaces##1\unskip}\/:\space
			\ttfamily##2\par}\endgroup}%
	\def\urladdr##1##2{\begingroup
		\def~{\char`\~}%
		\@ifnotempty{##2}{\nobreak\noindent\urladdrname
			\@ifnotempty{##1}{, \ignorespaces##1\unskip}\/:\space
			\ttfamily##2\par}\endgroup}%
	\addresses
	\endgroup
	\global\let\addresses=\@empty
}
\def\@setabstracta{%
	\ifvoid\abstractbox
	\else
	\skip@25\p@ \advance\skip@-\lastskip
	\advance\skip@-\baselineskip \vskip\skip@
	\box\abstractbox
	\prevdepth\z@ 
	\vskip-15pt
	\fi
}
\renewenvironment{abstract}{%
	\ifx\maketitle\relax
	\ClassWarning{\@classname}{Abstract should precede
		\protect\maketitle\space in AMS document classes; reported}%
	\fi
	\global\setbox\abstractbox=\vtop \bgroup
	\normalfont\small
	\list{}{\labelwidth\z@
		\leftmargin0pc \rightmargin\leftmargin
		\listparindent\normalparindent \itemindent\z@
		\parsep\z@ \@plus\p@
		
	}%
	\item[\hskip\labelsep\bfseries\abstractname.]%
}{%
	\endlist\egroup
	\ifx\@setabstract\relax \@setabstracta \fi
}

\def\ps@headings{\ps@empty
	\def\@evenhead{%
		\setTrue{runhead}%
		\normalfont\scriptsize
		\rlap{\thepage}\hfill
		\def\thanks{\protect\thanks@warning}%
		\leftmark{}{}}%
	\def\@oddhead{%
		\setTrue{runhead}%
		\normalfont\scriptsize
		\def\thanks{\protect\thanks@warning}%
		\rightmark{}{}\hfill \llap{\thepage}}%
	\let\@mkboth\markboth
}\ps@headings

\def\section{\@startsection{section}{1}%
	\z@{-1.2\linespacing\@plus-.5\linespacing}{.8\linespacing}%
	{\normalfont\bfseries\Large}}
\def\subsection{\@startsection{subsection}{2}%
	\z@{-.8\linespacing\@plus-.3\linespacing}{.3\linespacing\@plus.2\linespacing}%
	{\normalfont\bfseries\large}}
\def\subsubsection{\@startsection{subsubsection}{3}%
	\z@{.7\linespacing\@plus.1\linespacing}{-1.5ex}%
	{\normalfont\bfseries}}
\def\@secnumfont{\bfseries}
\makeatother
\fi 


\makeatother

\newtheorem{theorem}{Theorem}[section]
\newtheorem*{theorem*}{Theorem}
\newtheorem*{corollary*}{}
\newtheorem{proposition}[theorem]{Proposition}
\newtheorem{corollary}[theorem]{Corollary}
\newtheorem{lemma}[theorem]{Lemma}

\newtheorem*{conjecture*}{Conjecture}

\newtheorem{question}{Question}

\newtheorem*{remark}{Remark}

\theoremstyle{definition}

\newenvironment{definition}
{\pushQED{\qed}\defin}
{\popQED\enddefin}

\makeatletter

\providecommand{\proofname}{Proof}
\makeatother


\numberwithin{equation}{section}


\newcommand{\cdkh}{CDKh}
\newcommand{\dkh}{DKh}

\newcommand{\Q}{\mathbb{Q}}
\newcommand{\Z}{\mathbb{Z}}
\newcommand{\N}{\mathbb{N}}
\newcommand{\sg}{\mathfrak{s}}
\newcommand{\vup}{v^{\text{u}}_+}
\newcommand{\vum}{v^{\text{u}}_-}
\newcommand{\vlp}{v^{\ell}_+}

\newcommand{\dpl}{\overset{\bullet}{+}}
\newcommand{\dm}{\overset{\bullet}{-}}

\begin{document}
	
	\vspace*{-50pt}
	\title{Ascent concordance}
	
	\author{William Rushworth}
	\address{
		Department of Mathematics and Statistics, McMaster University
	}
	\email{\href{mailto:will.rushworth@math.mcmaster.ca}{will.rushworth@math.mcmaster.ca}}
	
	\def\subjclassname{\textup{2010} Mathematics Subject Classification}
	\expandafter\let\csname subjclassname@1991\endcsname=\subjclassname
	\expandafter\let\csname subjclassname@2000\endcsname=\subjclassname
	\subjclass{57M25, 57M27, 57N70}
	
	\keywords{link concordance, links in thickened surfaces, Slice-Ribbon Conjecture}

\begin{abstract}
A cobordism between links in thickened surfaces consists of a surface \( S \) and a \(3\)-manifold \( M \), with \( S \) properly embedded in \( M \times I \). We show that there exist links in thickened surfaces such that if \( (S,M) \) is a cobordism between them in which \( S \) is simple, then \( M \) must be complex. That is, there are cases in which low complexity of the surface does not imply low complexity of the \(3\)-manifold.

Specifically, we show that there exist concordant links in thickened surfaces between which a concordance can only be realised by passing through thickenings of higher genus surfaces. We exhibit an infinite family of such links that are detected by an elementary method and other families of links that are not detectable in this way. We investigate an augmented version of Khovanov homology, and use it to detect these families. Such links provide counterexamples to an analogue of the Slice-Ribbon conjecture.
\end{abstract}

\maketitle

\section{Introduction}\label{1Sec:intro}
This paper is concerned with the relationship between different measures of complexity of link cobordisms. As described below, a cobordism between links in thickened surfaces consists of a surface \( S \) and a \(3\)-manifold \( M \), with \( S \) properly embedded in \( M \times I \). We show that there exist links in thickened surfaces such that if \( (S,M) \) is a cobordism between them in which \( S \) is simple, then \( M \) must be complex. That is, there are cases in which low complexity of the surface does not imply low complexity of the \(3\)-manifold.

We do so by exhibiting concordant links in thickened surfaces between which a concordance can only be realised by passing through thickenings of higher genus surfaces. We exhibit infinite families of such links. One family is detected by an elementary method, to which the other families are not amenable. We investigate the affect of various types of cobordisms on an augmented version of Khovanov homology due to Manturov and the author \cite{Rushworth2017b}, and show that it detects these families. These infinite families provide counterexamples to an analogue of the Slice-Ribbon conjecture for knots in \( S^3 \).

The complexity of a cobordism, \( S \), between two knots in \( S^3 \) may be measured along two axes: the complexity of \( S \) as an abstract surface, and that of the embedding \( S \hookrightarrow S^3 \times I \). The former is a measure of intrinsic complexity, the latter an extrinsic measure. A cobordism of minimal intrinsic complexity is an annulus, known as a \emph{concordance}. One way to measure extrinsic complexity is via Morse theory: with this measure \( S \) is of minimal extrinsic complexity if it possesses no index \( 2 \) Morse critical points (see \cite[Chapter \(3\)]{LivingstonNaik}, for example).

The Slice-Ribbon Conjecture posits that if there exists a cobordism of minimal intrinsic complexity from a knot to the unknot, then there exists a cobordism of minimal intrinsic and extrinsic complexity.
\begin{conjecture*}[Slice-Ribbon]
Let \( K \) be a knot in \( S^3 \). If there exists a concordance from \( K \) to the unknot, then there exists a concordance with no index \(2\) Morse critical points.
\end{conjecture*}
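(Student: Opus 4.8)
The displayed statement is the Slice--Ribbon Conjecture, a long-standing open problem, so rather than a complete argument I will describe the shape a proof attempt would take and where it stalls.

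\textbf{Reducing to a statement in \( S^3 \).} The first step is the standard Morse-theoretic translation. A concordance from \( K \) to the unknot can be isotoped to be Morse with respect to the radial function on \( S^3 \times I \), and read as a movie it consists of births, saddles, and deaths; such a concordance with no index \(2\) critical point is exactly a presentation of \( K \) as a \emph{ribbon knot}, i.e.\ \( K \) bounds a disk immersed in \( S^3 \) whose only singularities are ribbon singularities (equivalently, the corresponding slice disk in \( B^4 \) admits a handle decomposition with \(0\)- and \(1\)-handles only, built from the minimum of the radial function). So the conjecture becomes: every knot that bounds an embedded disk in \( B^4 \) bounds a ribbon disk.

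\textbf{The cancellation strategy, and the obstruction route.} Given a slice disk \( D \subset B^4 \), one would make it radially Morse and try to cancel each index \(2\) critical point against an index \(1\) one using handle slides, finger and Whitney moves, and movie isotopies --- the four-dimensional analogue of simplifying a handle decomposition of a contractible manifold, pushing all maxima of the movie below the saddles until \( D \) is built from bands attached to trivial circles. The alternative is to produce an invariant that obstructs ribbon-ness strictly more than it obstructs slice-ness and show it vanishes on slice knots; the difficulty is that every slice obstruction presently available --- Fox--Milnor, Casson--Gordon, the correction terms of the double branched cover, the Heegaard Floer invariants \( \tau \) and \( \Upsilon \), Rasmussen's \( s \) --- is not known to separate slice from ribbon. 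This is why the conjecture has been confirmed only in families (two-bridge knots, various pretzel and Montesinos knots, knots of small crossing number), each by an ad hoc lattice-embedding or explicit-construction argument rather than a uniform mechanism.

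\textbf{Main obstacle.} There is no known four-dimensional input that forces the index \(2\) critical points of a slice disk to be removable, and --- as the present paper shows --- the optimistic expectation that minimal intrinsic complexity forces minimal extrinsic complexity genuinely fails in the closely related setting of links in thickened surfaces. A proof in \( S^3 \) would therefore have to exploit some feature special to \( B^4 \) that is absent there, and no such feature is currently in hand; consequently I would not expect to prove the statement outright, but would instead either try to enlarge the list of verified families or, guided by the counterexamples constructed in this paper, search for a slice knot that is not ribbon.
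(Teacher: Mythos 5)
This statement is the Slice--Ribbon Conjecture, which the paper records purely as motivation and offers no proof of; recognising it as a long-standing open problem and declining to ``prove'' it is exactly the right call. Your Morse-theoretic translation (a concordance to the unknot with no index \(2\) critical points is precisely a ribbon presentation, equivalently a slice disk in \( B^4 \) built from \(0\)- and \(1\)-handles only) and your summary of why current slice obstructions do not separate slice from ribbon are accurate and consistent with how the paper uses the conjecture as an analogy for its ascent/descent dichotomy.
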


This paper is concerned with the complexity of cobordisms between links in \(3\)-manifolds other than \( S^3\). Specifically, we consider links in thickened surfaces: let \( \Sigma_g \) be a closed orientable surface of genus \( g \), and \( L_1 \hookrightarrow \Sigma_{g_1} \times I \) and  \( L_2 \hookrightarrow \Sigma_{g_2} \times I \) links in thickenings of \( \Sigma_{g_1} \) and \( \Sigma_{g_2} \). A concordance between \( L_1 \) and  \( L_2 \) is a pair \( \left( S, M \right) \), where \( M \) is a compact orientable \(3\)-manifold with \( \partial M = \Sigma_{g_1} \sqcup \Sigma_{g_2} \), and \( S \) a disjoint union of annuli properly embedded in \( M \times I \) such that each annulus has a boundary component in both \( L_1 \) and \( L_2 \) \cite{TuraevCob}.

In addition to analysing the surface \( S \), we may pose new questions about the \(3\)-manifold \( M \). In this new setting we must alter the definition of extrinsic complexity of a concordance, taking into account the complexity of the target \( M \times I \). The measure of extrinsic complexity splits into two distinct aspects:
\begin{enumerate}[(i)]
	\item the complexity of the target \( M \times I \)
	\item the complexity of the image of the embedding \( S \hookrightarrow M \times I \).
\end{enumerate}
The measure of intrinsic complexity and measure (ii) may be naturally carried over from the classical case. It remains to choose measure (i). In this paper we study the complexity of \( M \times I \) by considering the surfaces appearing as level sets of a Morse function on \( M \). A natural generalization of the Slice-Ribbon conjecture to links in thickened surfaces, therefore, states that if there exists an intrinsically simple cobordism between two links, then there exists a cobordism which is intrinsically simple and simple with respect to both extrinsic measures (i) and (ii). We exhibit counterexamples to this generalization, presenting concordant links in thickened surfaces that are not concordant via simple cobordisms with respect to measure (i).

Let us outline our choice of measure (i). Given a concordance \( ( S, M ) \), let \( f \) be a Morse function on \( M \). Up to isotopy we may assume that \( S \) is transverse to the \( I \) factor of \( M \times I \). Under the convention \( f \left( \Sigma_{g_1} \right) = \lbrace 1 \rbrace \) and \( f \left( \Sigma_{g_2} \right) = \lbrace 0 \rbrace \), when traversing the concordance from \( \Sigma_{g_1} \) to \( \Sigma_{g_2} \) an index \(2\) critical point of \( f \) is a \(3\)-dimensional \(1\)-handle addition. That is, the genus of level surfaces of \( f \) increases by \(1\) when passing an index \(2\) critical point. Postponing the precise definition until \Cref{2Subsec:cobordism}, we say that an index \(2\) critical point is \emph{exceeding} if the genus of level surfaces appearing immediately after it is greater than both \( g_1 \) and \(g_2\). Concordances for which \( M \) does not contain an exceeding index \( 2\) critical point are declared to be of minimal complexity with respect to measure (i), and are known as \emph{descent}. Therefore a concordance that is not descent must pass through surfaces of greater genus than both the initial and terminal surface.

Our main result establishes that there exist representatives of the same concordance class that cannot be seen to be concordant without introducing exceeding critical points. That is, we show that the existence of an intrinsically simple cobordism does not imply the existence of an intrinsically simple cobordism that is also simple with respect to measure (i).
\begin{theorem*}\label{1Thm:main}
There exist concordant links in thickened surfaces such that if \( \left( S, M \right) \) is a concordance between them, then \( M \) contains an exceeding index \(2\) critical point. Such links are said to be \emph{ascent concordant}. Indeed, there exist infinite families of ascent concordant links.
\end{theorem*}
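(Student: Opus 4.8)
The plan is to produce explicit candidate links in thickened surfaces and then obstruct all descent concordances between them using the augmented Khovanov homology $\vkh$ referenced above. First I would construct the examples: take a knot $K$ in $\Sigma_g \times I$ that is null-homotopic but whose diagram genuinely uses the handles of $\Sigma_g$ (for instance, a ``virtual'' unknot that bounds a disc only after adding handles), together with the honest unknot $U$ in $S^2 \times I$ (or in $\Sigma_g \times I$ viewed via the trivial stabilization). These two links are concordant — indeed a concordance is realised by a sequence of (de)stabilizations of the surface, which forces passage through level surfaces of genus larger than $\max(g_1,g_2)$, i.e.\ through an exceeding index $2$ critical point. This establishes the ``concordant'' half of the statement essentially by construction; the content is the ``only if'' direction.

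Next I would set up the invariance machinery. The key step is a careful bookkeeping lemma: if $(S,M)$ is a concordance that is \emph{descent} (no exceeding index $2$ critical point), then the level surfaces of a Morse function on $M$ all have genus $\le \max(g_1,g_2)$, and hence the cobordism map induced on $\vkh$ factors through the $\vkh$ of links living in thickened surfaces of bounded genus. I would then show that $\vkh$, suitably graded (by the quantum grading together with the homological grading and the extra grading coming from the augmentation of \cite{Rushworth2017b}), is functorial under such bounded-genus cobordisms, so that a descent concordance from $L_1$ to $L_2$ induces an isomorphism between the relevant graded pieces of $\vkh(L_1)$ and $\vkh(L_2)$. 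The invariance of the maps under handle slides and Morse cancellations away from exceeding points is the technical heart here, and it is where I expect the main obstacle to lie: one must check that the chain-level maps assigned to births, deaths, saddles, and the $3$-dimensional $1$- and $2$-handles all commute up to homotopy \emph{without} ever invoking a stabilization of the ambient surface beyond genus $\max(g_1,g_2)$, and that the augmentation grading is preserved by all of these moves.

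Finally I would compute, or at least extract enough of, $\vkh(K)$ and $\vkh(U)$ to see that no grading-preserving isomorphism of the requisite type exists — for example by exhibiting a nontrivial class in $\vkh(K)$ supported in a grading (coming from the handles of $\Sigma_g$) that simply has no counterpart in $\vkh(U)$, or by a rank/Euler-characteristic count in a fixed grading. This contradiction shows that no descent concordance between $K$ and $U$ can exist, so any concordance must contain an exceeding index $2$ critical point; $K$ and $U$ are therefore ascent concordant. To upgrade to \emph{infinite} families I would replace $K$ by a sequence $K_n$ — for instance by taking connected sums, or by iterating the handle-using construction $n$ times, or by adding $n$ full twists along an essential curve — arranged so that the distinguishing $\vkh$ grading (or the rank in that grading) grows with $n$; this simultaneously shows the $K_n$ are pairwise non-descent-concordant and produces infinitely many ascent concordant pairs. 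The expected difficulty, beyond the functoriality check above, is choosing the family $K_n$ so that the $\vkh$ computation stays tractable while the obstruction genuinely persists — this is where the dichotomy mentioned in the introduction (one family detected by an elementary argument, the others only by $\vkh$) will come from, the elementary family being one where a classical homological or linking-number obstruction already suffices.
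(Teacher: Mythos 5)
There is a genuine gap, and it occurs at both ends of your argument. First, your choice of examples --- a knot \( K \hookrightarrow \Sigma_g \times I \) versus the unknot \( U \) --- is precisely the case that the paper's machinery cannot handle and explicitly leaves open (Question \ref{1Q:ascentslice}): it is unknown whether any knot is ascent concordant to the unknot, and the augmented invariant \( \dkh''(-,\gamma) \) is stated to be unsuited to knots because of a dimension result tied to its \( \Z_2 \)-cohomology input. The paper instead works with two-component links in \( \Sigma_1 \times I \) related by Dehn twists (\Cref{4Fig:links1,4Fig:links2,4Fig:links3}), where the ``totally nontrivial'' condition (\Cref{4Def:tnt}) can actually be verified by computing generators in specific \((j,c)\)-bidegrees. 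Moreover, for your proposed pair you do not actually establish the concordance half: a knot that bounds a disc only after stabilization need not be concordant to the unknot at all, and even when \( g_1 > g_2 \) a concordance can simply destabilize monotonically, so nothing about ``using the handles'' forces exceeding critical points by construction.

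Second, your obstruction mechanism --- that a descent concordance induces a grading-preserving isomorphism on the augmented homology, with the extra grading intact --- is not what the paper proves and runs into a structural problem: the extra (\(c\)-)grading is defined relative to a class \( \gamma \in H^1(\Sigma_g;\Z_2) \) of the ambient surface, and this class does not transport across a destabilizing index \(1\) critical point (the curve carrying \( \gamma \) may be exactly what gets destabilized). The paper only has invariance for strict and pseudostrict concordances (\Cref{3Thm:isos,3Cor:iso}); for destabilizations it argues differently: a totally nontrivial link must meet every destabilizing attaching sphere (\Cref{4Prop:attaching}), and --- this is the technical heart, \Cref{4Prop:gen0tnt}, proved via a \( \chi(S) \bmod 4 \) grading computation on the filtered cobordism maps --- this intersection property persists for any link obtained from \( L_1 \) by cutting a concordance open before its first non-strict critical point. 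Hence the first (de)stabilizing critical point in any concordance from a totally nontrivial link cannot be a destabilization, so it is a stabilization, which is exceeding when \( g_1 \geq g_2 \) (\Cref{4Thm:ascent}); in the equal-genus examples one must additionally rule out pseudostrict concordances, which the paper does by a homotopy-class argument. Your proposal is missing this intersection-obstruction idea entirely, and the ``bounded-genus functoriality plus graded isomorphism'' step you flag as the technical heart would not go through as stated.
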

This result follows from \Cref{4Thm:ascent}, and pairs of ascent concordant links are given in \Cref{4Fig:links1,4Fig:links2,4Fig:links3}. Note that the qualifier exceeding must be added to make the problem nontrivial: there are concordant links \( L_1 \hookrightarrow \Sigma_{g_1} \times I \) and  \( L_2 \hookrightarrow \Sigma_{g_2} \times I \) with \( g_1 < g_2 \), so that any concordance between them must contain an index \(2\) critical point (not necessarily exceeding).

Denote by \( J \) and \( J' \) the links depicted in \Cref{4Fig:links2}. As described below, we use an augmented version of Khovanov homology to prove that \( J \) and \( J' \) are ascent concordant. During this proof a concordance between them, \( (S,M) \), is described. In this concordance \( M \) is equal to \( \Sigma_1 \times I \) in a non-minimal handle decomposition: \( M \) contains a cancelling pair of handles. This in turn yields a non-minimal handle decomposition of \( M \times I \), into which \( S \) is properly embedded. As described on \cpageref{4Page:handles}, the result that \( J \) and \( J' \) are ascent concordant implies that the non-minimal handle decomposition of \( M \times I \) cannot be simplified in the complement of \( S \). Our main result therefore yields an application of a link homology theory to a problem of knotted surfaces. In particular, it is evidence that the augmented version of Khovanov homology contains subtle information regarding the complements of such surfaces.

Let \( \mathcal{L}_g \) denote the set of links in thickened surfaces of genus less than or equal to \( g \), and \( \mathcal{C}_g \) the quotient of \( \mathcal{L}_g \) obtained by identifying two links if they are concordant but not ascent concordant. Our main result demonstrates that \( \mathcal{C}_g \) is not a proper subset of \( \mathcal{C}_{g'} \), for \( g < g' \), in general. This is an instance of the ubiquitous phenomenon of `increase-before-decrease', as exhibited by classical knot diagrams \cite{Kauffman2012hard}, presentations of groups, and handle decompositions of manifolds.

In contrast, let \( K_1 \hookrightarrow \Sigma_{0} \times I \), \( K_2 \hookrightarrow \Sigma_{0} \times I \) be knots in the thickened \(2\)-sphere. A result of Boden and Nagel \cite{Boden2016} implies that if \( K_1 \) and \( K_2 \) are concordant, then they are not ascent concordant. We exhibit ascent concordant links with ambient space \( \Sigma_1 \times I \), so that the ascent phenomenon is seen to occur as soon as one passes to thickened surfaces of nonzero genus.

One way to approach the Slice-Ribbon Conjecture is to attempt to produce invariants of ribbon concordance that are not invariant under generic concordance. The case of ascent concordance is similar. As outlined above (and described in \Cref{2Subsec:cobordism}), a concordance of links in thickened surfaces \( \left( S , M \right) \) is descent if \( M \) does not contain an exceeding index \( 2 \) critical point; descent is the analogue of ribbon in the classical case. We are therefore interested in invariants that obstruct descent concordance but not ascent concordance.

Our main tool is an augmented version of Khovanov homology, defined by Manturov and the author \cite{Rushworth2017b}. It associates to a link in a thickened surface \( L \hookrightarrow \Sigma_g \times I \) and \( \gamma \in H^1 \left( \Sigma_g ; \Z_2 \right) \) a trigraded Abelian group, \( \dkh '' ( L, \gamma ) \), the \emph{totally reduced homology of \(L\) with respect to \( \gamma \)}. As described in \Cref{4Sec:ascent}, the totally reduced homology is invariant under certain concordances but not generic concordances. Crucially, it contains information regarding the intersection of \( L \) with attaching spheres of destabilizing handles (index \( 1 \) critical points of \( M \)). In \Cref{4Prop:gen0tnt} we show that this information is also robust under certain genus \( 0 \) cobordisms i.e.\ cobordisms of the form \( \left( S , M \right) \) with \( g ( S ) = 0 \).

Given concordant links \( L_1 \hookrightarrow \Sigma_{g_1} \times I \) and  \( L_2 \hookrightarrow \Sigma_{g_2} \times I \) with \( g_1 > g_2 \), the above properties allow us to prove that, if \(  \dkh '' ( L_1 , \gamma ) \) satisfies a certain condition, any concordance from \( L_1 \) to \( L_2 \) is ascent. Specifically, we show that \( L_1 \) cannot be made disjoint to the attaching sphere of a destabilizing handle within a descent concordance. It follows that if \( \left( S , M \right) \) is a concordance from \( L_1 \) to \( L_2 \) then \( M \) contains an index \( 2 \) critical point, exceeding by construction. If \( g_1 = g_2 \) and we can show that every concordance from \( L_1 \) to \( L_2 \) contains a (de)stabilizing handle (this can be done using \( \dkh '' ( L_1 , \gamma ) \) or \( \pi_1 ( \Sigma_{g_1} ) \), for example), then an identical argument shows that \( L_1 \) and \( L_2 \) are ascent concordant.

The totally reduced homology may be viewed as a generalization of the Rasmussen invariant extracted from the Lee homology of a knot in \( S^3 \) \cite{Rasmussen2010,Lee2005}. While the Rasmussen invariant contains information regarding the genus of surfaces appearing as cobordisms between two knots in \( S^3 \), \( \dkh '' ( L, \gamma ) \) contains information regarding the \(3\)-manifolds appearing in cobordisms between links in thickened surfaces (in addition to the cobordism surfaces).

Although our totally reduced homology is strong enough to detect ascent concordant links, there are a number of questions that may require even stronger invariants to be resolved.

\begin{question}\label{1Q:ascentslice}
Do there exist ascent concordant knots?
\end{question}
In particular, it is unknown if there exist knots that are ascent concordant to the unknot in \( \Sigma_0 \times I \). The totally reduced homology takes as input an element \( \gamma \in H^1 \left( \Sigma_g ; \Z_2 \right) \), and due to a result on the dimension of the totally reduced homology of a knot, the choice of \( \Z_2 \) coefficients renders it unsuited to the knot case. An upgrade of the construction that takes as input an element of the integral cohomology of \( \Sigma_g \) has the potential to detect ascent concordant knots.

Let \( L_1 \) and \( L_2 \) be concordant links. Denote by \( Ex ( L_1, L_2 ) \) the minimum number of exceeding index \( 2 \) critical points in a concordance between \( L_1 \) and \( L_2 \). In this paper we provide the first examples of pairs of links with \( Ex ( L_1, L_2 ) > 0 \). It is natural to ask if \( Ex ( L_1, L_2 ) \) need be arbitrarily large.
\begin{question}\label{1Q:arbitrary}
Given a positive integer \( m \), does there exist a pair of concordant nonsplit links, \( L_1 \) and \( L_2 \), with \( Ex ( L_1, L_2 ) > m \)?
\end{question}

This paper is organised as follows. In \Cref{2Sec:cobordism} we describe cobordism and concordance of links in thickened surfaces, and define ascent and descent concordance. \Cref{3Sec:trh} contains an overview of the construction of the totally reduced homology, and gives some of its properties. We employ the totally reduced homology in \Cref{4Sec:ascent}, and establish that the set of ascent concordant links is nonempty, using \Cref{4Thm:ascent}. We work in the smooth category throughout.

\subsubsection*{Acknowledgements} We thank Hans Boden and Andrew Nicas for their encouragement, and many helpful conversations and comments. We thank Robin Gaudreau, Gabriel Islambouli, Patrick Orson, and the anonymous referees for perspicacious comments on earlier versions of this work.

\section{Cobordism of links in thickened surfaces}\label{2Sec:cobordism}
In this section we define links in thickened surfaces and their diagrams, before describing cobordism and concordance of such objects. We also introduce the notions of \emph{ascent} and \emph{descent} cobordism.

\subsection{Links in thickened surfaces}\label{2Subsec:lits}
We denote by \( \Sigma_g \) a closed orientable surface of genus \( g \), not necessarily connected. For concreteness we state the definition of the genus of a disconnected surface. For \( S \) a closed orientable surface, the genus of \( S \) is given by
\begin{equation*}
	g ( S ) = \dfrac{2 c ( S ) - \chi ( S )}{2}
\end{equation*}
for \( c ( S ) \) the number of connected components of \( S \).

A \emph{link in a thickened surface} (henceforth simply \emph{link}) is an embedding \( L : \bigsqcup S^1 \hookrightarrow \Sigma_g \times I \), considered up to isotopy. We abuse notation to denote by \( L \hookrightarrow \Sigma_g \times I \) a link in \( \Sigma_g \times I \). Links in \( S^3 \) appear as links in \( \Sigma_0 \times I \), and are referred to as classical links. We refer to the unique knot in \( \Sigma_0 \times I \) that bounds a disc as \emph{the unknot}.

Given a link \( L \hookrightarrow \Sigma_g \times I \) a regular projection to \( \Sigma_g \) yields a \(4\)-valent graph on \( \Sigma_g \) whose vertices may be decorated with the under-\ or overcrossing decoration of classical knot theory. Such a decorated graph is known as a \emph{diagram of \( L \)}; an example is given in \Cref{2Fig:21lift}.

Two diagrams on \( \Sigma_g \) represent the same link if and only if they are related by a finite sequence of Reidemeister moves (where such moves occur in disc neighbourhoods on \( \Sigma_g \)). Notice that as we are considering links up to isotopy only, given two diagrams \( D_1 \looparrowright \Sigma_g \), \( D_2 \looparrowright \Sigma_g \), we may compare them - that is, pose the question `does \( D_1 \) represent the same link as \( D_2 \)?' - only when their ambient spaces are identical (as opposed to being merely diffeomorphic).

While other authors have considered links in thickened surfaces up to self-diffeomorphism of the (thickened) surface, working as we do at the level of isotopy is well-established. See Asaeda-Przytycki-Sikora \cite{Asaeda2004}, Queffelec-Wedrich \cite{Wedrich18}, and references therein, for example.

\begin{figure}
	\includegraphics[scale=0.65]{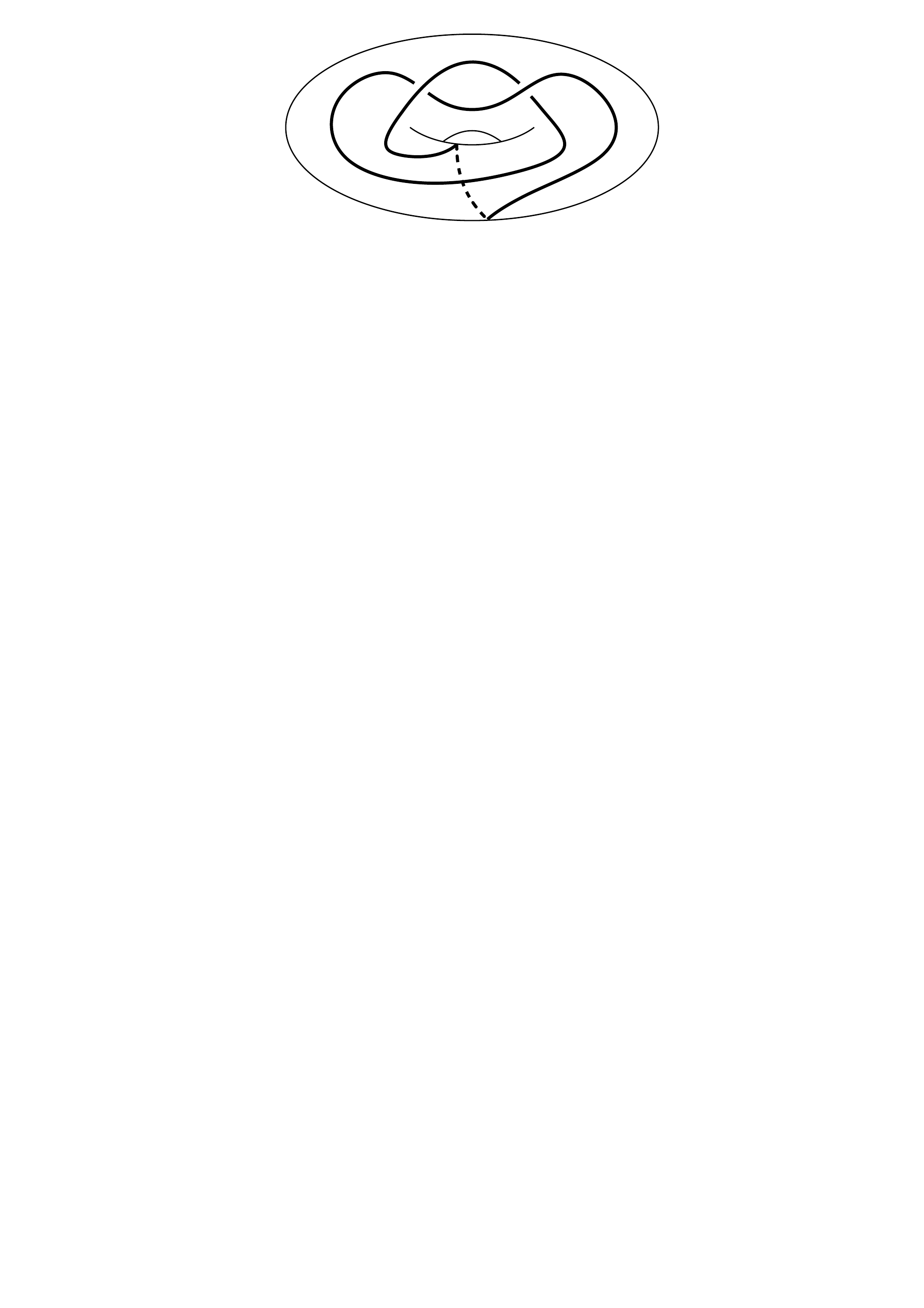}
	\caption{A diagram of a knot in \( \Sigma_1 \times I \).}
	\label{2Fig:21lift}
\end{figure}

\subsection{Cobordism}\label{2Subsec:cobordism}
We define cobordism and concordance of links in thickened surfaces, following Turaev \cite{TuraevCob} (note that he uses the term cobordism for what we refer to as a concordance).

\begin{definition}[Cobordism]\label{2Def:cob}
Let \( L_1 \hookrightarrow \Sigma_{g_1} \times I \) and \( L_2\hookrightarrow \Sigma_{g_2} \times I \) be links.
A \emph{cobordism} from \( L_1 \) to \( L_2 \) is a pair, \( (S, M) \), consisting of a compact orientable \(3\)-manifold \( M \), with \( \partial M = \Sigma_{g_1} \sqcup \Sigma_{g_2} \), and a compact orientable surface \( S \) properly embedded in \( M \times I \), with \( \partial S = L_1 \sqcup L_2 \). If such a pair \( (S,M) \) exists we say that \( L_1\) and \( L_2 \) are \emph{cobordant}. We refer to \( S \) as the \emph{cobordism surface}.

If \( S \) is a disjoint union of annuli such that each annulus has a boundary component in both \( \Sigma_{g_1} \times I \) and \( \Sigma_{g_2} \times I \), we say that \( (S,M) \) is a \emph{concordance}, and that \( L_1\) and \( L_2 \) are \emph{concordant}.
\end{definition}
A schematic picture of a cobordism is given in \Cref{1Fig:schematic}. Notice that if \(L_1\) and \(L_2\) are concordant then \( | L_1 | = | L_2 | \) (for \( | L | \) the number of components of \( L \)). There is an important distinction between concordance and genus \(0\) cobordism: two links and are genus \(0\) cobordant if there is a cobordism between them, \( (S,M) \), with \( g(S)=0 \). Notice that it is possible that \( | L_1 | \neq | L_2 | \) and that \( S \) is not a disjoint union of annuli in this case.

\begin{figure}
\includegraphics[scale=0.5]{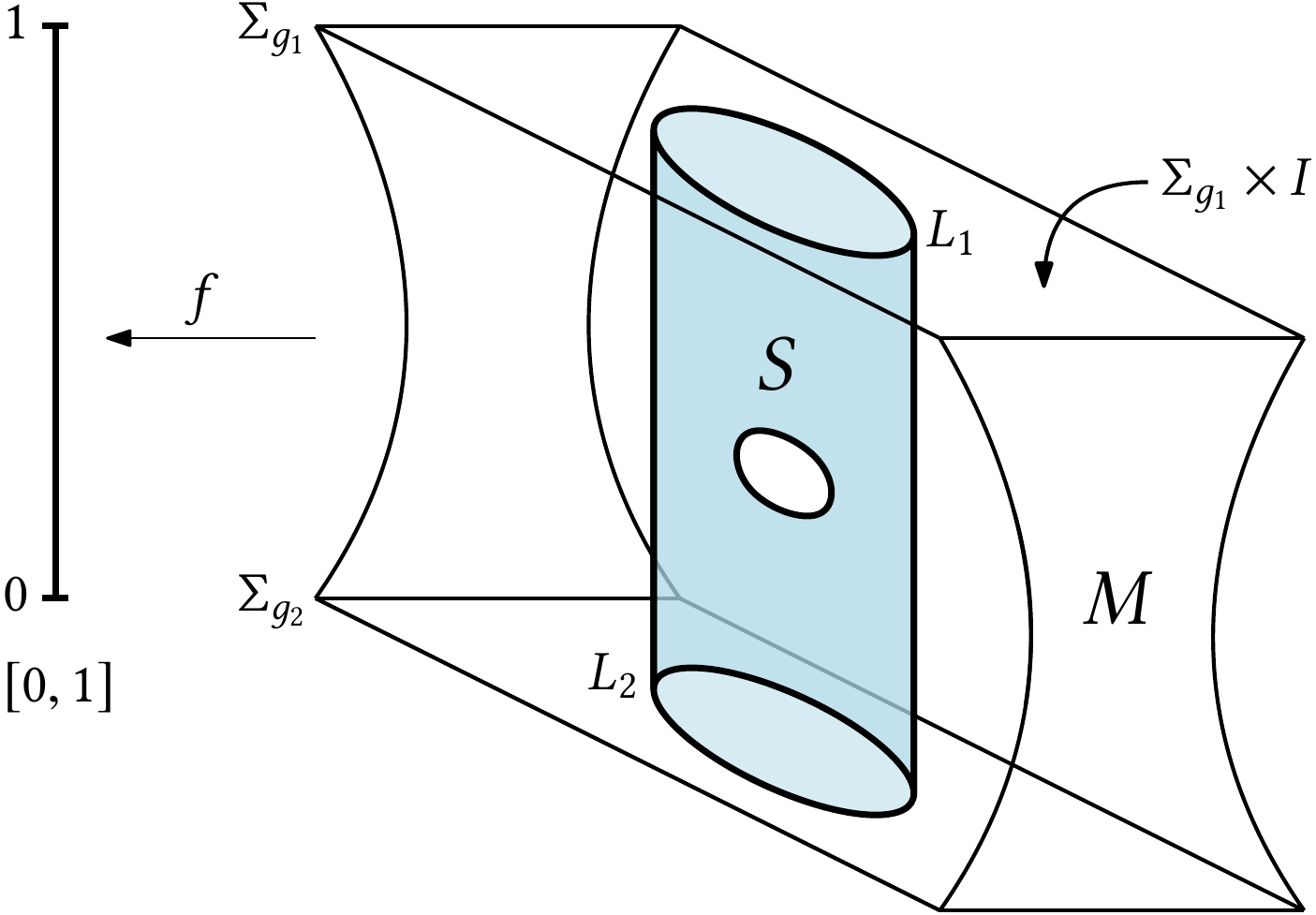}
\caption{A cobordism between links in thickened surfaces.}
\label{1Fig:schematic}
\end{figure} 

\begin{proposition}\label{2Prop:cobunknot}
Any two links are cobordant.
\end{proposition}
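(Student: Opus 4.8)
The plan is to reduce everything to empty links. First I would record that ``cobordant'' is an equivalence relation on links in thickened surfaces: reflexivity is clear, symmetry amounts to relabelling the two ends of \( M \), and transitivity follows by gluing \( M\cup_\Sigma M' \) along a common boundary surface \( \Sigma \) while simultaneously gluing \( S\cup_L S' \) along the common link \( L\subset\Sigma\times I \), after which one checks that the result is again a properly embedded surface in a collar-straightened \( (M\cup M')\times I \). I would also note that the empty link in \( \Sigma_{h}\times I \) and the empty link in \( \Sigma_{h'}\times I \) are cobordant for all \( h,h' \): take \( S=\emptyset \) and \( M=H_{h}\sqcup H_{h'} \), the disjoint union of handlebodies, so that \( \partial M=\Sigma_{h}\sqcup\Sigma_{h'} \). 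Granting these reductions, it suffices to prove that every link \( L\hookrightarrow\Sigma_g\times I \) is cobordant to an empty link.

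To do this, fix an orientation of \( L \) and a diagram \( D \) of \( L \) on \( \Sigma_g \). Smoothing the crossings of \( D \) one at a time, each along its oriented (Seifert) smoothing, exhibits \( L \) as cobordant --- through a cobordism whose \(3\)-manifold is the constant \( \Sigma_g\times I \) and whose surface is the usual trace of finitely many band moves inside \( (\Sigma_g\times I)\times I \) --- to the union of Seifert circles \( \Gamma=\gamma_1\sqcup\dots\sqcup\gamma_k \), a disjoint collection of embedded simple closed curves on a level copy of \( \Sigma_g \). Now I build the desired cobordism directly. Let
\[
M=(\Sigma_g\times[0,1])\ \cup\ \big(\text{\(3\)-dimensional \(2\)-handles attached along }\gamma_1,\dots,\gamma_k\subset\Sigma_g\times\{1\}\big),
\]
a compact orientable \(3\)-manifold with \( \partial M=(\Sigma_g\times\{0\})\sqcup\Sigma' \), where \( \Sigma' \) is the closed orientable surface obtained from \( \Sigma_g \) by surgery along the \( \gamma_i \). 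Regard \( L \) as lying in \( \Sigma_{g_1}\times I:=(\Sigma_g\times\{0\})\times I\subset\partial(M\times I) \). Inside \( M\times I \) take the surface consisting of (i) the band-move trace above, which lives in \( (\Sigma_g\times[0,1])\times I \) and runs from \( L \) at the \( \Sigma_g\times\{0\} \) end to \( \Gamma \) sitting (at a fixed \( I \)-level) inside \( \Sigma_g\times\{1\} \), and (ii) for each \( i \) the core disk \( \Delta_i \) of the \( i \)-th \(2\)-handle, which is an embedded disk in \( M \) with \( \partial\Delta_i=\gamma_i \); the \( \Delta_i \) are pairwise disjoint since the handles are. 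Gluing (i) and (ii) along \( \Gamma \) and pushing interiors slightly off \( \partial(M\times I) \) produces a properly embedded, orientable surface \( S\subset M\times I \) with \( \partial S=L \), entirely contained in \( \Sigma_{g_1}\times I \). Thus \( (S,M) \) is a cobordism from \( L \) to the empty link in \( \Sigma'\times I \), and composing with the equivalence-relation remarks above completes the argument.

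The step I expect to be the real content --- and the reason this is not a one-line argument --- is the construction of \( M \). One cannot simply let \( M \) be a handlebody bounding \( \Sigma_g \) and try to cap \( L \) off: the class \( [L]\in H_1(\Sigma_g\times I) \) need not vanish in \( H_1(M) \), which is a genuine obstruction to \( L \) bounding any surface in \( M\times I \). Replacing \( L \) by its Seifert circles and attaching \(2\)-handles along them is precisely the device that kills the offending homology, by literally making those curves bound disks; in homological language it realises the class in \( H_2\big(M\times I,\partial(M\times I)\big) \) whose boundary is \( [L] \). The remaining verifications --- orientability of the assembled surface (immediate, since all pieces are planar or are Seifert-type traces glued along circles), proper embeddedness after straightening collars, and the gluing lemmas behind symmetry and transitivity of cobordism --- are routine, and I would only sketch them.
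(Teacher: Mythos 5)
Your proof is correct, and its first half is the same as the paper's: both start by smoothing every crossing of a diagram through a strict cobordism in \( \Sigma_g \times I \), leaving a disjoint union of embedded circles on \( \Sigma_g \). After that the routes diverge. The paper keeps those circles in the thickened surface and drives the \emph{surface} down: it observes that one can always find destabilizing curves on \( \Sigma_g \) disjoint from the smoothed diagram, destabilizes until the level surface is \( \Sigma_0 \), caps off all but one circle there, and ends at the unknot, after which transitivity through the unknot finishes. You instead kill the circles with the \(3\)-manifold: you attach \(3\)-dimensional \(2\)-handles along the Seifert circles themselves and cap them with the core disks, ending at the empty link in the surgered surface \( \Sigma' \); this forces two extra (easy) steps the paper does not need, namely that empty links in different thickened surfaces are cobordant via disjoint handlebodies and that cobordism is transitive under gluing. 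What your version buys is that you never have to argue that destabilizing curves disjoint from the diagram exist, and your homological remark correctly identifies why one cannot simply take \( M \) to be a handlebody and cap \( L \) off; the cost is the detour through empty links and a possibly disconnected \( \Sigma' \), which is harmless since the paper allows disconnected surfaces and does not require \( M \) to be connected. Both constructions produce a compact orientable pair \( (S, M) \) as required, so your argument is a valid alternative to the paper's.
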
 
\begin{proof}
We prove that any link is cobordant to the unknot. The proposition then follows by transitivity.

Given a diagram \( D \) of \( L \hookrightarrow \Sigma_g \times I \), one may remove all of its crossings via repeated iterations of a cobordism \( ( S , \Sigma_g \times I ) \), where \( S \) is constructed as follows:
\begin{center}
\includegraphics[scale=1]{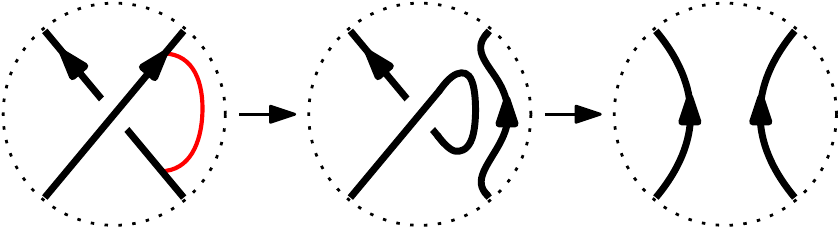}
\end{center}
(this will add genus to the cobordism surface, in general). The resulting diagram, \( D ' \), is a disjoint union of circles on \( \Sigma_g \). Thus there is an available sequence of destabilizations taking \( \Sigma_g \) to \( \Sigma_0 \), the attaching spheres of which are disjoint to \( D ' \). After making these destabilizations we are left with a disjoint union of circles on \( \Sigma_0 \); cap off all but one of these circles. We have just described a cobordism from \( L \) to the unique knot in \( \Sigma_0 \times I \) that bounds a disc, the unknot.
\end{proof}
For an alternative proof of \Cref{2Prop:cobunknot} see \cite{Kauffman1998c}.

In contrast, not all links are concordant. For instance, it can be shown that the knot depicted in \Cref{2Fig:21lift} is not concordant to any knot in \( \Sigma_0 \times I \) \cite[Section \(4.3\)]{Rushworth2017}.

As described in \Cref{1Sec:intro}, the goal of this paper is to show that there exist pairs of concordant links such that any concordance between them, \( ( S, M ) \), passes through surfaces of genus higher than that of the initial and terminal surface. This is equivalent to \( M \) possessing a certain type of index \(2\) Morse critical point.

We now concretise the notion of `passing through' used above.
\begin{definition}[Exceeding critical point]\label{2Def:exceeding}
Let \( (S,M) \) be a cobordism from \( L_1 \hookrightarrow \Sigma_{g_1} \times I \) to \( L_2 \hookrightarrow \Sigma_{g_2} \times I \). Let \( f \) be a Morse function on \( M \). Up to isotopy we may assume that \( S \) is transverse to the \( I \) factor of \( M \times I \). That is, if \( t \) is a regular value of \( f \), then \( S \) intersects \( f^{-1} \left( t \right) \times I \) transversely.

Under the convention \( f \left( \Sigma_{g_1} \right) = \lbrace 1 \rbrace \) and \( f \left( \Sigma_{g_2} \right) = \lbrace 0 \rbrace \), and reading the cobordism by starting at \(L_1 \) and ending at \( L_2 \), an index \(2\) critical point of \( f \) is a \(3\)-dimensional \(1\)-handle addition. From this viewpoint the genus of surfaces appearing as level sets increases by \(1\) when passing an index \(2\) critical point. Similarly, the genus of surfaces appearing as level sets decreases by \(1\) when passing an index \(1\) critical point. See \Cref{1Fig:levelsets}.

Suppose \( p \in M \) is an index \(2\) critical point of \(f\). We say that \( p \) is \emph{exceeding} if \( f^{-1} \left( f(p)-\epsilon \right) = \Sigma_{k} \) with \( k > g_1, g_2 \) for \( \epsilon > 0 \) arbitrarily small. That is, \(p\) is exceeding if the genus of level surfaces appearing immediately after \(p\) is greater than both \( g_1 \) and \(g_2\).
\end{definition}

The convention that \( f \left( \Sigma_{g_1} \right) = \lbrace 1 \rbrace \) and \( f \left( \Sigma_{g_2} \right) = \lbrace 0 \rbrace \) follows that given by Gordon in the case of ribbon concordance of classical knots \cite{Gordon1981} (further employed in \cite{Baker16,Hayden19,LivingstonNaik,Miyazaki98,Miyazaki18} among others\footnote{However, in some recent papers another convention has been used \cite{Levine19,Zemke19}.}). Under this convention index \( i \) critical points of \( M \) correspond to \(3\)-dimensional \(( 3 - i )\)-handle additions. As described above, an index \( 1 \) critical point corresponds to a \(3\)-dimensional \(2\)-handle addition, with attaching sphere a simple closed curve on the level surface immediately preceding the critical point. Similarly an index \( 2 \) critical point corresponds to a \(3\)-dimensional \(1\)-handle addition. We say that a \(2\)-handle addition is \emph{destabilizing} if it reduces the genus of the level surface, and that a \(1\)-handle addition is \emph{stabilizing} if it increases the genus of the level surface. Henceforth we shall not distinguish between critical points and the handle additions they correspond to.

\begin{figure}
\includegraphics[scale=0.5]{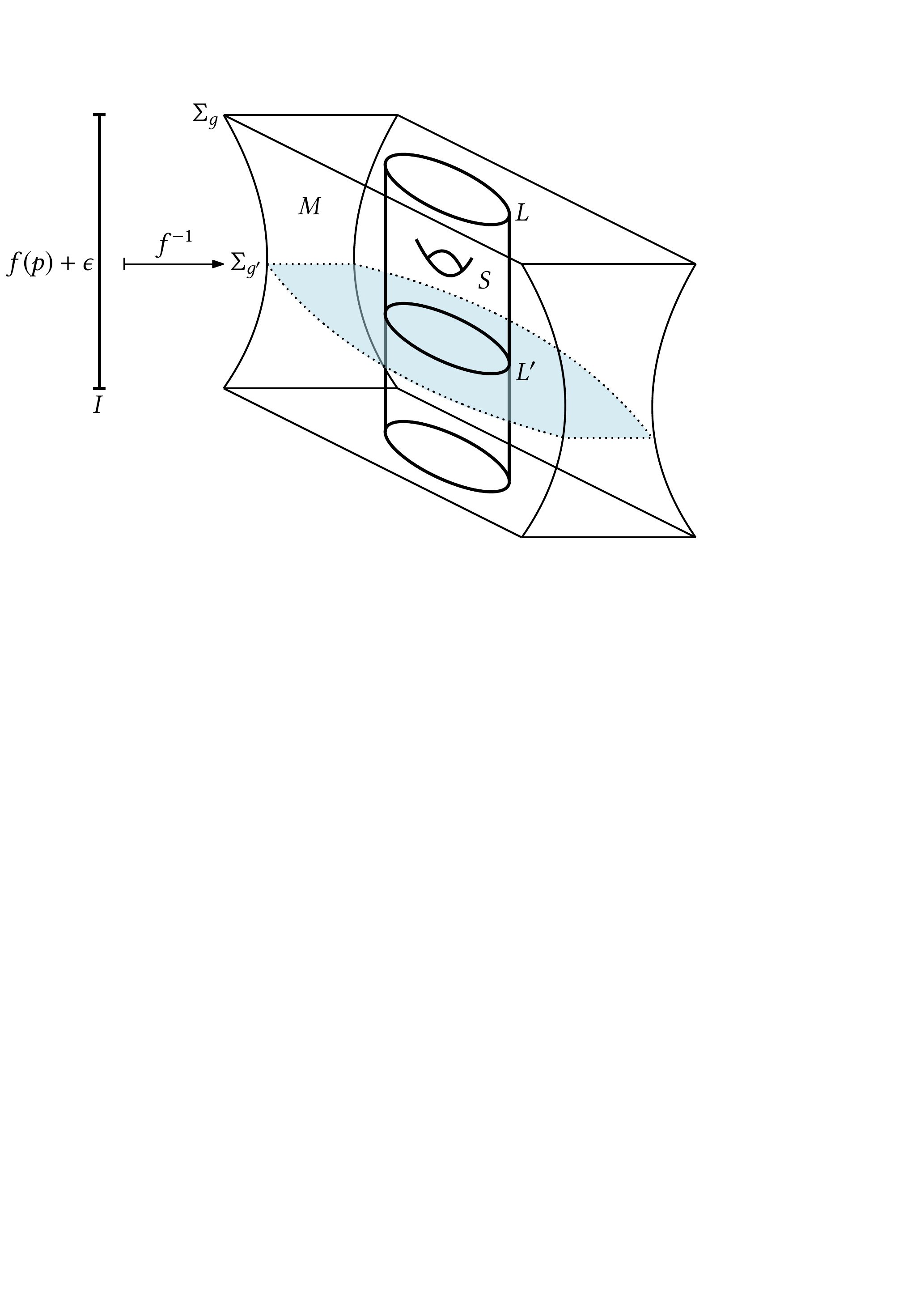}
\caption{A thickening of a level set of \(f \), in blue.}
\label{1Fig:levelsets}
\end{figure}

\begin{definition}[Pseudostrict, strict cobordism]\label{2Def:strict}
Let \( (S,M) \) be a cobordism. We say that \( (S,M) \) is \emph{strict} if \( M = \Sigma_g \times I \). We say that \( ( S, M ) \) is \emph{pseudostrict} if \( M \) admits a Morse function with critical points of the following types: index \( 0 \), index \( 1 \) with attaching sphere a separating curve, index \( 2 \) such that the handle is attached between disjoint components, or index \( 3 \).
\end{definition}
Notice that the genus of level surfaces does not change within a pseudostrict cobordism, but the number of connected components may.

\begin{definition}[Ascent, descent cobordism]
We say that \( (S,M) \) is \emph{descent} if \(M\) admits a Morse function without exceeding index \(2\) critical points. We say that \( (S,M) \) is \emph{ascent} if \(M\) admits a Morse function with an exceeding index \(2\) critical point.
\end{definition}

Notice that a pseudostrict cobordism is descent, but that the converse is not necessarily true. Two links are said to be \emph{descent\slash pseudostrictly\slash strictly} cobordant if there exists a descent\slash pseudostrict\slash strict cobordism between them. Two links are said to be \emph{ascent cobordant} if they are cobordant but not descent cobordant. Ascent\slash descent\slash pseudostrict\slash strict concordance and genus \( 0 \) cobordism are defined likewise, so that two links are said to be \emph{ascent concordant} if they are concordant but not descent concordant.

\section{Totally reduced homology}\label{3Sec:trh}
We outline the difficulties encountered when extending Khovanov homology to links in thickened surfaces in \Cref{3Subsec:extending}, before reviewing the construction of the totally reduced homology in \Cref{3Subsec:doubled,3Subsec:perturb}. After describing the functorial nature of the theory in \Cref{3Subsec:func}, we highlight some of its important properties in \Cref{3Subsec:properties}.

\subsection{Extending Khovanov homology to thickened surfaces}\label{3Subsec:extending}
When constructing Khovanov homology for classical links the cube of resolutions possesses exactly two types of edge:
\begin{enumerate}[(i)]
\item An edge along which one circle splits into two.
\item An edge along which two circles merge into one.
\end{enumerate}
Passing to links in thickened surfaces causes a new type of edge to appear, along which one circle can be sent to one circle, as depicted in \Cref{3Fig:121} (the associated cobordism is a once punctured M\"{o}bius band). This is known as a \emph{single cycle smoothing}. A map must be assigned to these edges, that we denote \( \eta \). Let \( \mathcal{A} \) be the module assigned to one circle in the construction of classical Khovanov homology; if one attempts to assign \( \mathcal{A} \) to circles in this new situation, the map \( \eta : \mathcal{A} \rightarrow \mathcal{A} \) is forced to be the zero map for quantum grading reasons. This causes collateral damage to the chain complex, however, so that it is no longer well-defined (over any coefficient ring except \( \Z_2 \)). Extra technology must then be added to repair this, as is done by Manturov \cite{Manturov2006} and Tubbenhauer \cite{Tubbenhauer2014a} (see also \cite{Dye2014,Asaeda2004}).

We take a different approach, altering the module assigned to a circle. Specifically, we assign \( \mathcal{A} \oplus \left( \mathcal{A} \lbrace -1 \rbrace \right) \) (where \( \lbrace -1 \rbrace \) denotes a quantum grading shift by \( -1 \)). As is detailed in \cite{Rushworth2017}, this allows \( \eta \) to be nonzero, and yields a well-defined homology theory automatically; this homology theory is known as \emph{doubled Khovanov homology}. In the remainder of this section we describe an augmentation of doubled Khovanov homology that is more sensitive to the ambient thickened surface.

\begin{figure}
	\centering
	\begin{tikzpicture}[scale=1,
	roundnode/.style={}]
	
	\node[roundnode] (s0)at (-3,0)  {
		\includegraphics[scale=0.5]{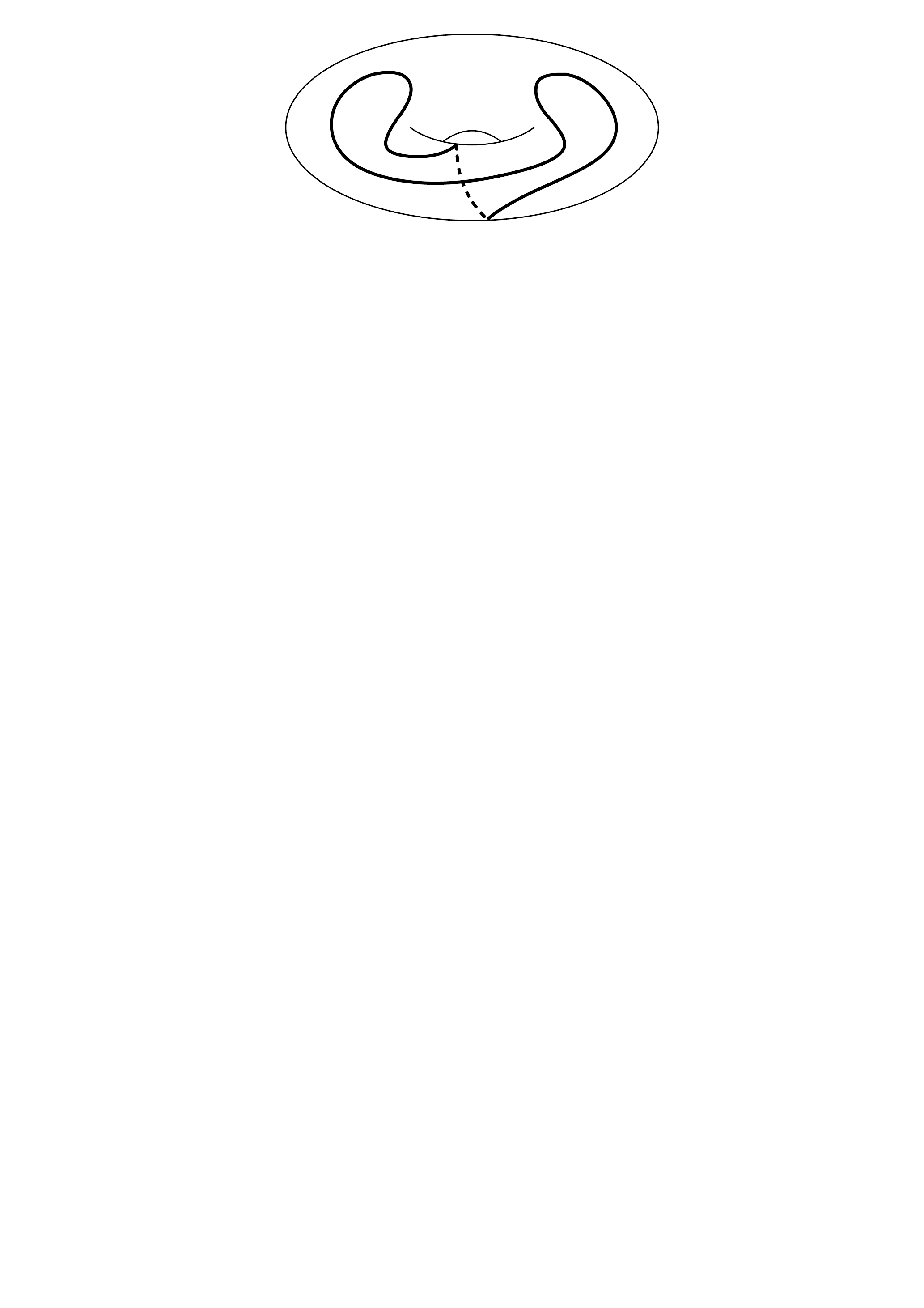}};
	
	\node[roundnode] (s1)at (3,0)  {
		\includegraphics[scale=0.5]{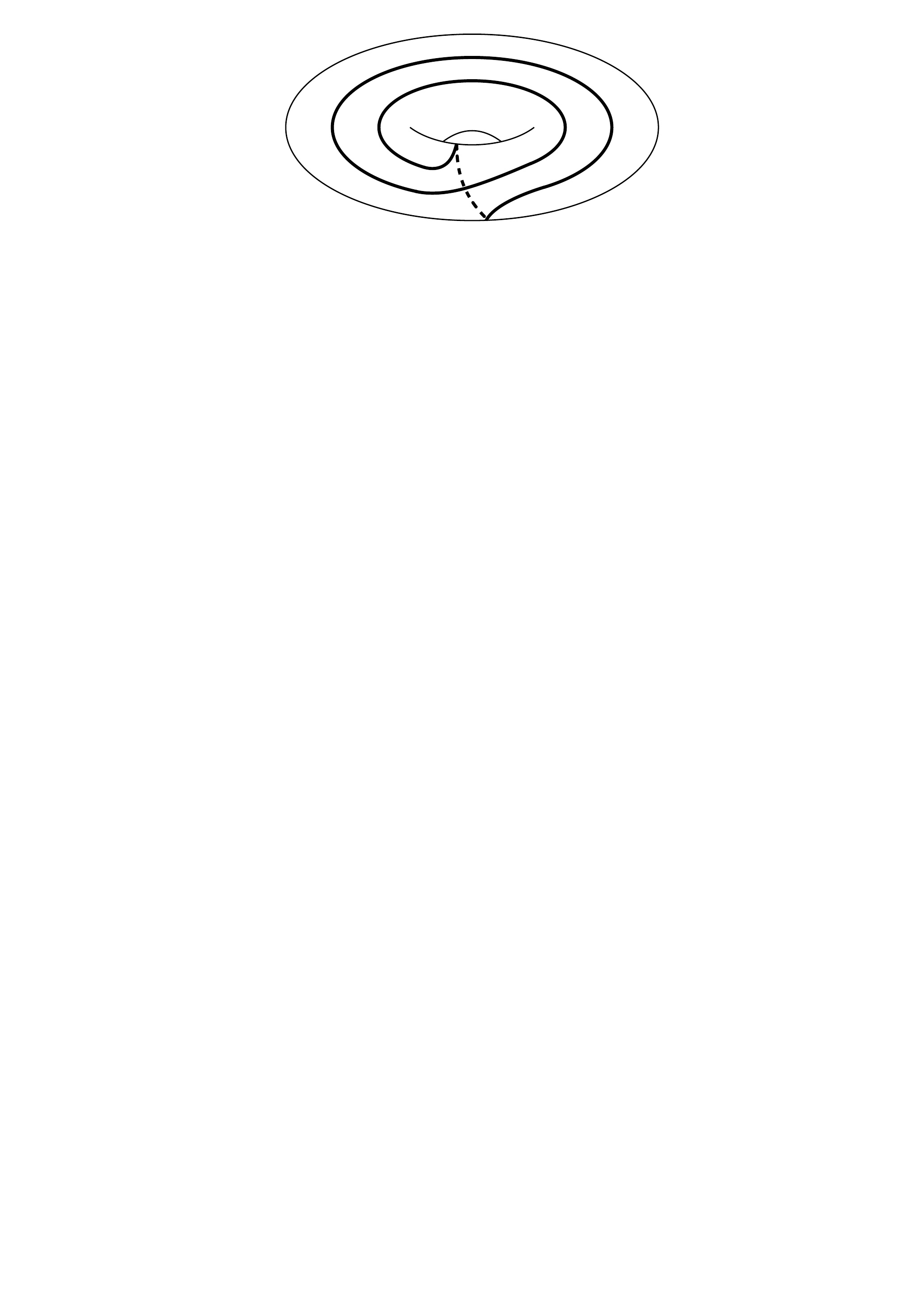}};
	
	\draw[->,thick] (s0)--(s1) node[above,pos=0.5]{\( \eta \)} ;
	\end{tikzpicture}
	\caption{The single cycle smoothing.}\label{3Fig:121}
\end{figure}

\begin{remark}
Doubled Khovanov homology has structural similarities to an instanton homology due to Kronheimer and Mrowka \cite{Kronheimer2019,Kronheimer2019b}, in that both contain information regarding the Tait colourings of trivalent graphs. Indeed, the four colour theorem may be restated in terms of the ranks of (perturbations of) either of these homology theories.
\end{remark}

\subsection{Doubled Khovanov homology of links in thickened surfaces}\label{3Subsec:doubled}
The definition of the totally reduced homology is given in \cite[Section \(3\)]{Rushworth2017b}, which itself relies on a generalization of doubled Khovanov homology \cite{Rushworth2017}. The construction is familiar from other theories in the Khovanov tradition: a cube of smoothings is associated to a diagram, and then turned into an algebraic chain complex. The chain homotopy equivalence class of this chain complex is an invariant of the link represented by the diagram, so that its homology is also.

\begin{definition}[Smoothing]
	\label{3Def:smoothing}
	Let \( D \) be a diagram of an oriented link \( L \hookrightarrow \Sigma_g \times I \). The crossings of \( D \) may be \emph{resolved} in one of two ways:
	\begin{center}
	\includegraphics[scale=0.75]{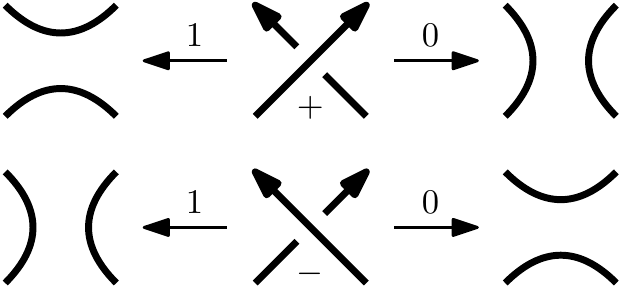}
	\end{center}
	The resolutions are known as the \(0\)-\ and the \(1\)-resolution, as depicted. A \emph{smoothing} of \( D \) is a diagram formed by arbitrarily resolving all of the crossings of \( D \); it is a disjoint union of circles in \( \Sigma_g \).

	Given a smoothing \( \mathscr{S} \) of \( D \), the \emph{height of \( \mathscr{S} \)}, denoted \( | \mathscr{S} | \), is defined as
	\begin{equation*}
	| \mathscr{S} | \coloneqq \# \left( 1 \text{-resolutions in}~ \mathscr{S} \right) - n_-
	\end{equation*}
	for \( n_- \) the number of negative crossings of \( D \).
\end{definition} 

First, smoothings of diagrams are used to decorate the vertices of an appropriate-dimensional cube.

\begin{definition}[Dotted cube of smoothings]
	\label{3Def:dottedcube}
	Let \( D \) be a diagram of an oriented link \( L \hookrightarrow \Sigma_g \times I \), with \( n \) crossings, of which \( n_- \) are negative. Arbitrarily label the crossings from \(1 \) to \( n \). Denote by \( e_1 e_2 \cdots e_n \in \lbrace 0, 1 \rbrace^{\times n} \) the smoothing obtained by resolving the \( k \)-th crossing into its \( e_k \)-resolution. Assign to the vertices of the cube \( \lbrace 0, 1 \rbrace^{\times n} \) the appropriate smoothings of \( D \); we no longer make a distinction between a vertex and the smoothing assigned to it. The result is known as the \emph{cube of smoothings of \( D \)}.

	Pick \( \gamma \in H^1 \left( \Sigma_g ; \Z_2 \right) \). Given a smoothing \( \mathscr{S} \) of \( D \), a circle within \( \mathscr{S} \) is decorated with a \emph{dot} if it has nonzero image under \( \gamma \). Repeat this for all of the vertices of the cube. The resulting assignment of dots is known as \emph{the dotting with respect to \( \gamma \)}. The fully decorated cube is referred to as the \emph{dotted cube of smoothings of \( D \) with respect to \( \gamma \)}, and is denoted \( \llbracket D, \gamma \rrbracket \).
\end{definition}
Two examples of dottings are given in \Cref{3Fig:d21cube}; green dots represent the dotting associated to Poincar\'e dual to the green simple closed curve, and the red simple closed curve does not produce any dots. 

\begin{figure}
	\begin{tikzpicture}[scale=1,
	roundnode/.style={}]
	
	\node[roundnode] (s0)at (-6.5,0)  {\includegraphics[scale=0.5]{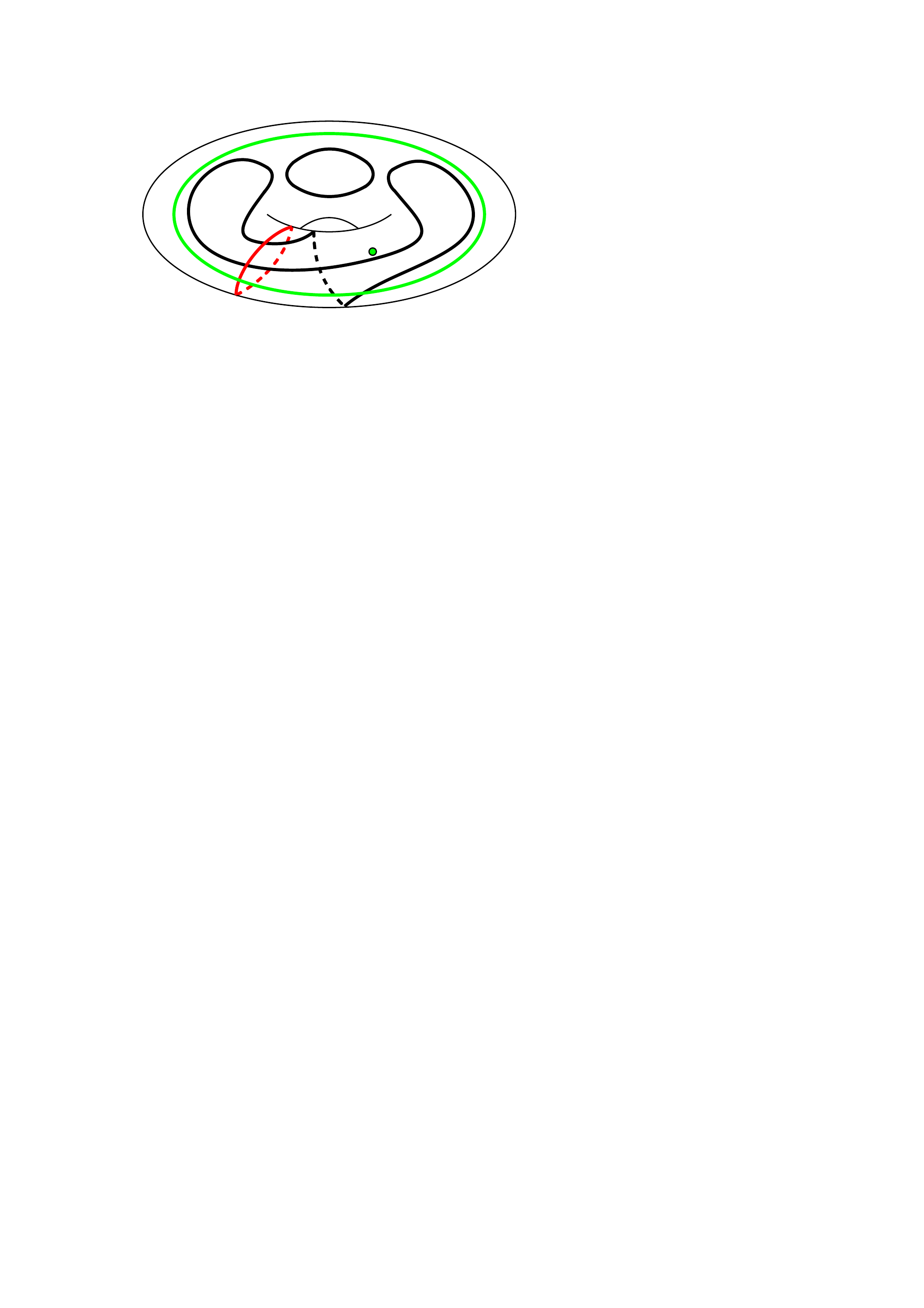}
	};
	
	\node[roundnode] (s1)at (-2,2.75)  {\includegraphics[scale=0.5]{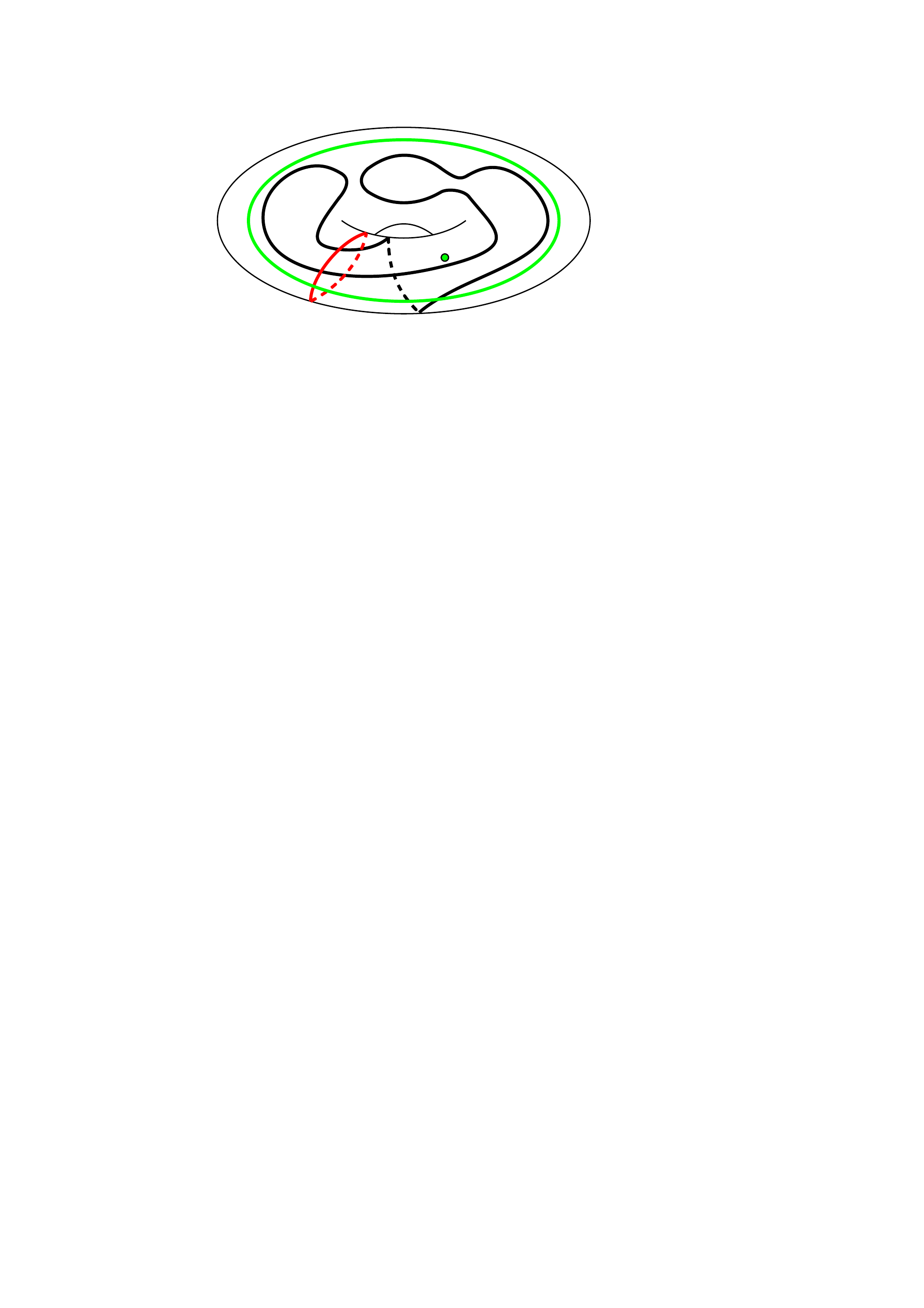}
	};
	
	\node[roundnode] (s2)at (-2,-2.75)  {\includegraphics[scale=0.5]{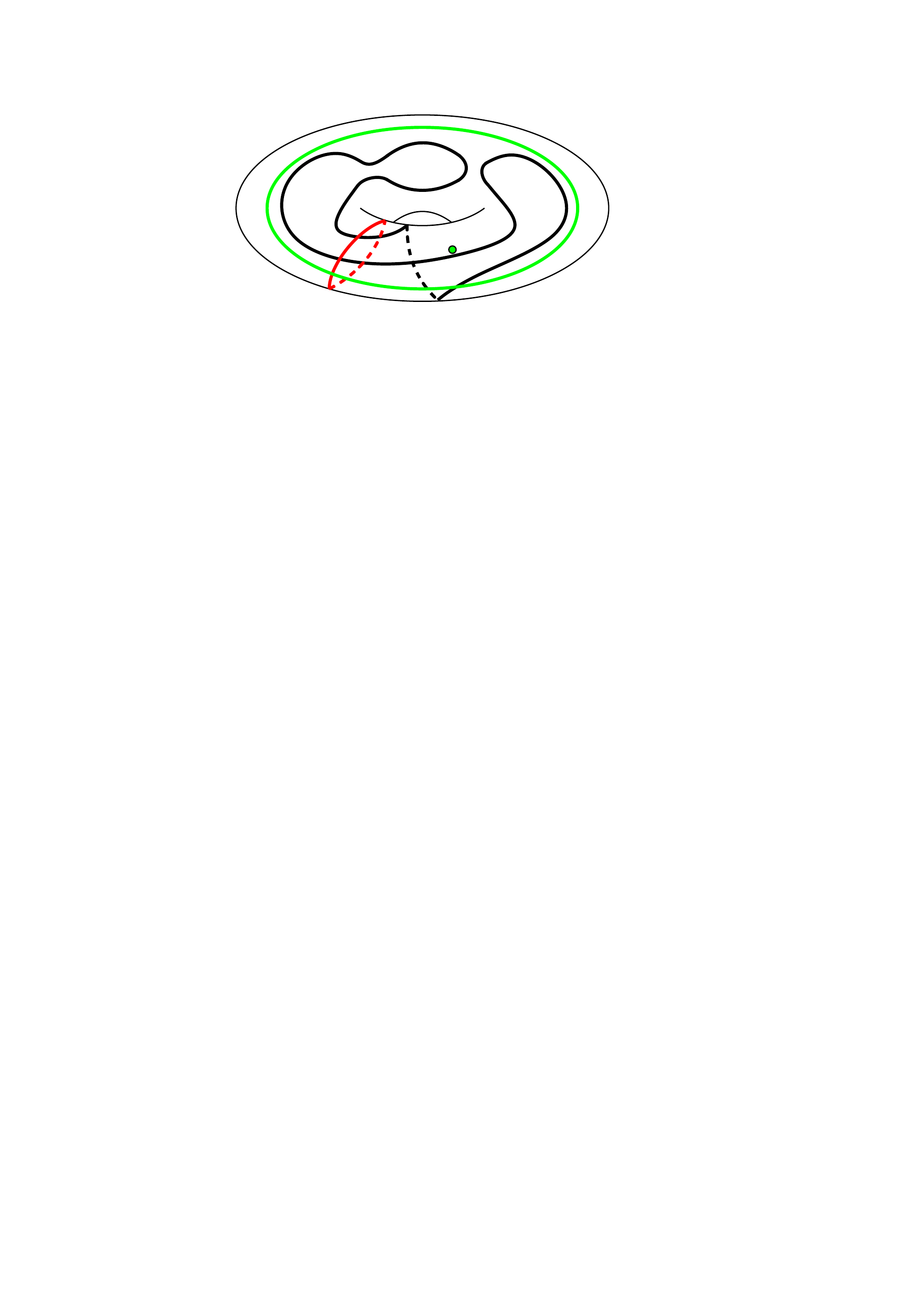}
	};
	
	\node[roundnode] (s3)at (2.5,0)  {\includegraphics[scale=0.5]{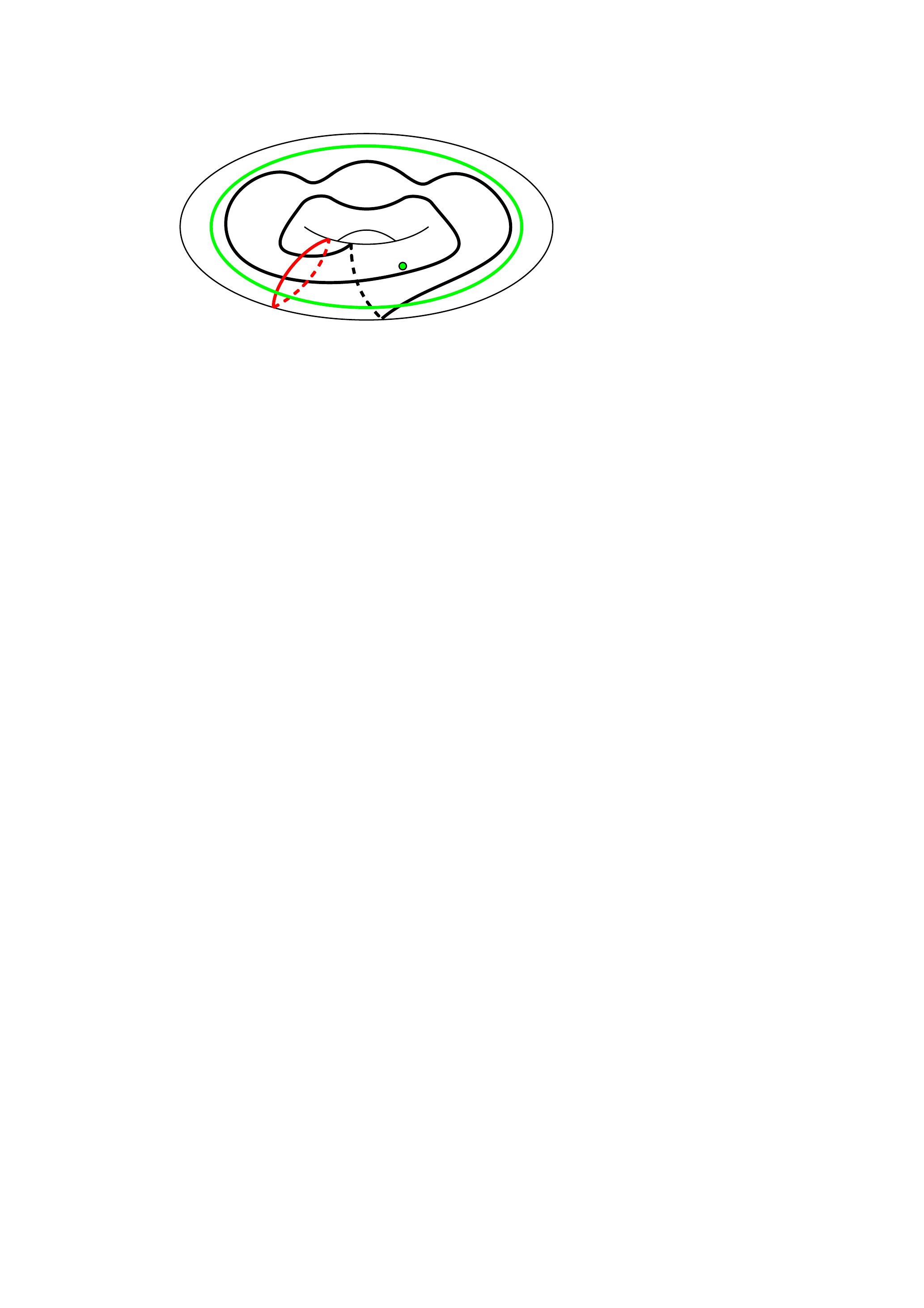}
	};

	\draw[->,thick] (s0)--(s1) node[above left,pos=0.6]{} ;
	
	\draw[->,thick] (s0)--(s2) node[below left,pos=0.6]{} ;
	
	\draw[->,thick] (s1)--(s3) node[above right,pos=0.4]{} ;
	
	\draw[->,thick] (s2)--(s3) node[below right,pos=0.3]{} ;			
	\end{tikzpicture}
	\caption{The dotted cube of smoothings of the diagram depicted in \Cref{2Fig:21lift}.}
	\label{3Fig:d21cube}
\end{figure}

Next, the fully decorated cube is converted into a chain complex.

\begin{definition}[Dotted complex]\label{3Def:dottedcomplex}
Let \( D \) be a diagram of an oriented link \( L \hookrightarrow \Sigma_g \times I \). Pick \( \gamma \in H^1 ( \Sigma_g ; \Z_2) \) and form the dotted cube \( \llbracket D, \gamma \rrbracket \) as in \Cref{3Def:dottedcube}. 

Let \( \mathscr{S} \) be a vertex of \( \llbracket D, \gamma \rrbracket \), made up of \( k \) circles. We assign a vector space to \(\mathscr{S} \) in the following manner
\begin{equation*}
\bigsqcup_k \bigcirc^{\left( \bullet \right)} \longmapsto \overset{\left( \bullet \right)\phantom{\left( \bullet \right)}}{\mathcal{A}^{\otimes k}} \oplus \left( \left( \overset{\left( \bullet \right)\phantom{\left( \bullet \right)}}{\mathcal{A}^{\otimes k}} \right) \left\lbrace -1 \right\rbrace \right)
\end{equation*}
where
\begin{equation*}
\mathcal{A} = {\Q [X]} / X^2 = \left\langle v_+ , v_- \right\rangle_{\Q}.\footnote{The construction is valid for coefficients in a commutative unital ring, but we shall only need \( \Q \).}
\end{equation*}
The vector space \( \mathcal{A} \) is graded with \( v_{\pm} \) of degree \( \pm 1 \); this grading extends linearly across tensor products. The braces \( \left\lbrace -1 \right\rbrace \) denote a grading shift by \( - 1 \).

Arbitrarily identify the tensorands and the circles of the smoothing: a dot, \( \bullet \), is added to the vector space if the associated circle possesses a dot. This decoration persists to the elements so that
\begin{equation*}
\overset{\bullet}{\mathcal{A}} = \left\langle v_{\dpl} , v_{\dm} \right\rangle_{\Q}.
\end{equation*}

We add a superscript to denote the shifted and unshifted copies of \( \mathcal{A} \). Specifically, we write
\begin{equation*}
\overset{\left( \bullet \right)}{\mathcal{A}} = \left\langle v^{\text{u}}_{\overset{\left( \bullet \right)}{+}} , v^{\text{u}}_{\overset{\left( \bullet \right)}{-}} \right\rangle_{\Q}
\end{equation*}
and
\begin{equation*}
\overset{\left( \bullet \right)}{\mathcal{A}} \left\lbrace -1 \right\rbrace = \left\langle v^{\ell}_{\overset{\left( \bullet \right)}{+}} , v^{\ell}_{\overset{\left( \bullet \right)}{-}} \right\rangle_{\Q}
\end{equation*}
and similarly for tensor products. A dot in parentheses, \( \left( \bullet \right) \), denotes a copy of \( \mathcal{A} \) that may or may not be dotted (likewise for elements such as \( v^{\ell}_{\overset{\left( \bullet \right)}{-}} \)).

Denote by \( \cdkh_i \left( D, \gamma \right) \) the direct sum of the vector spaces assigned to the vertices of height \( i \). The \emph{doubled Khovanov complex of \( D \) with respect to \( \gamma \)}, denoted \( \cdkh \left( D, \gamma \right) \) has chain spaces \( \cdkh_i \left( D, \gamma \right) \), and differentials matrices of maps, whose entries are determined by the edges of \( \llbracket D, \gamma \rrbracket \). The forms of these maps depend on the dotting, and are given by the maps \( m^0 \), \( \Delta^0 \), and \( \eta^0 \) in \Cref{3Fig:diffcomp}. Signs are added to the entries in the standard way.
\end{definition}

The assignment used in \Cref{3Def:dottedcomplex} is not a topological quantum field theory (TQFT) nor an unoriented TQFT in the sense of Turaev and Turner \cite{Turaev2006}: it fails the multiplicativity axiom. Nevertheless, it is functorial with respect to link cobordism, which we exploit in \Cref{4Sec:ascent}.

\begin{figure}
	\begin{tikzpicture}[scale=0.6,
	roundnode/.style={}]
	\node[roundnode] (s5)at (-8.5,-7) {\( \begin{matrix}
		\mathcal{A} \otimes \overset{{\color{green} \bullet}}{\mathcal{A}} \\
		\oplus \\
		\left (\mathcal{A} \otimes \overset{{\color{green} \bullet}}{\mathcal{A}} \right) \lbrace -1 \rbrace \\
		\end{matrix}\)
	};
	
	\node[roundnode] (s6)at (-1.5,-7) {\( \begin{matrix}
		\left(\begin{matrix}\overset{{\color{green} \bullet}}{\mathcal{A}} \\
		\oplus \\
		\overset{{\color{green} \bullet}}{\mathcal{A}} \lbrace -1 \rbrace\end{matrix}\right) \\
		\oplus \\
		\left(\begin{matrix}\overset{{\color{green} \bullet}}{\mathcal{A}} \\
		\oplus \\
		\overset{{\color{green} \bullet}}{\mathcal{A}} \lbrace -1 \rbrace\end{matrix}\right)
		\end{matrix} \)
	};
	
	\node[roundnode] (s7)at (5.5,-7) {\( \begin{matrix}
		\overset{{\color{green} \bullet}}{\mathcal{A}} \\
		\oplus \\
		\overset{{\color{green} \bullet}}{\mathcal{A}} \lbrace -1 \rbrace \\
		\end{matrix}\)
	};
	
	\node[roundnode] (s8)at(-8.5,-10.5) {\( -2 \)};
	
	\node[roundnode] (s9)at(-1,-10.5) {\( -1 \)};
	
	\node[roundnode] (s10)at(5.5,-10.5) {\( 0 \)};
	
	\draw[->,thick] (s5)--(s6) node[above,pos=0.5]{\( d_{-2} =  \begin{pmatrix}
		m^{0} \\
		m^{0}
		\end{pmatrix} \)
	};
	
	\draw[->,thick] (s6)--(s7) node[above,pos=0.5]{\( d_{-1} =  \left( \eta^{0}, - \eta^{0} \right) \)
	};
	\end{tikzpicture}
	\caption{The dotted complex associated to the cube of smoothings depicted in \Cref{3Fig:d21cube}, with respect to the Poincar\'e dual to the green simple closed curve.}
	\label{3Fig:dottedcomp}
\end{figure}

There is a distinguished basis of \( \cdkh \left( D, \gamma \right) \), on which we define three gradings.

\begin{definition}[States]\label{3Def:states}
Let \( D \) be a diagram of an oriented link \( L \hookrightarrow \Sigma_g \times I \). Let \( \mathscr{S} \) be a smoothing of \( D \) whose circles are decorated with exactly one of \( + \) and \( - \) (in addition to the dotting with respect to \( \gamma \)). A \emph{state} is an element \( a^{\text{u/l}} \in \cdkh \left( D, \gamma \right) \) of the form
\begin{equation*}
	\begin{aligned}
		a^{\text{u}} &= v^{\text{u}}_{\overset{\left(\bullet\right)}{\pm}} \otimes v^{\text{u}}_{\overset{\left(\bullet\right)}{\pm}} \otimes \cdots \otimes v^{\text{u}}_{\overset{\left(\bullet\right)}{\pm}} \\
		a^{\ell} &= v^{\ell}_{\overset{\left(\bullet\right)}{\pm}} \otimes v^{\ell}_{\overset{\left(\bullet\right)}{\pm}} \otimes \cdots \otimes v^{\ell}_{\overset{\left(\bullet\right)}{\pm}}
	\end{aligned}
\end{equation*}
where the \( \pm \) and \( \left( \bullet \right) \) are determined by the decorations of the associated circle of \( \mathscr{S} \) (under the identification of the circles of \( \mathscr{S} \) and the tensorands as described in \Cref{3Def:dottedcomplex}).
\end{definition}

\begin{definition}[Gradings]\label{3Def:gradings}
Let \( D \) be a diagram of an oriented link \( L \hookrightarrow \Sigma_g \times I \) with \( n_- \) negative crossings and writhe \( wr(D) \). Pick \( \gamma \in H^1 ( \Sigma_g ; \Z_2) \) and form the dotted complex \( \cdkh \left( D, \gamma \right) \) as in \Cref{3Def:dottedcomplex}.

We define three gradings on the states of \( \cdkh \left( D, \gamma \right) \). Let \( a^{\text{u/l}} \) be a state associated to the smoothing \( \mathscr{S} \). The \emph{homological grading}, \( i \), is defined as
\begin{equation}\label{3Eq:homgrading}
	i ( a^{\text{u} / \ell} ) = | \mathscr{S} | = \# \left( 1\text{-resolutions in}~ \mathscr{S} \right) - n_- .
\end{equation}
The \(i\)-grading does not depend on the \( \text{u} / \ell \) superscript, nor the dotting associated to \( \gamma \). The \emph{quantum grading}, \( j \), is defined as
\begin{equation}\label{3Eq:quantumgrading}
	\begin{aligned}
	j ( a^{\text{u}} ) &= \# \left( v^{\text{u}}_{\overset{\left(\bullet\right)}{+}} \text{'s in}~a^{\text{u}} \right) - \# \left( v^{\text{u}}_{\overset{\left(\bullet\right)}{-}} \text{'s in}~a^{\text{u}} \right) + i \left( a^{\text{u} } \right) + wr(D) \\
	j ( a^{\ell} ) &= \# \left( v^{\ell}_{\overset{\left(\bullet\right)}{+}} \text{'s in}~a^{\ell} \right) - \# \left( v^{\ell}_{\overset{\left(\bullet\right)}{-}} \text{'s in}~a^{\ell} \right) + i ( a^{\ell } ) + wr(D) - 1
	\end{aligned}
\end{equation}
(note this is simply the grading of \( \mathcal{A}^{\otimes k} \) described in \Cref{3Def:dottedcomplex} with a particular shift). The \( j \)-grading depends on the \( \text{u} / \ell \) superscript, but not the dotting associated to \( \gamma \). The \emph{dotted grading}, \( c \), is defined as
\begin{equation}\label{3Eq:dottedgrading}
	c ( a^{\text{u} / \ell} ) = \# \left( v^{\text{u} / \ell}_{\dm} \text{'s in}~a^{\text{u} / \ell} \right) - \# \left( v^{\text{u} / \ell}_{\dpl} \text{'s in}~a^{\text{u} / \ell} \right) + \dfrac{1}{2} j ( a^{\text{u} / \ell} ) .
\end{equation}
The \( c \)-grading depends on both the \( \text{u} / \ell \) superscript and the dotting associated to \( \gamma \). The \( i \)-\ and \( j \)-gradings are \( \Z \)-gradings, while the \( c \)-grading is a \( \Z \left[ \frac{1}{2} \right] \)-grading.
\end{definition}

\begin{figure}
\begin{minipage}{\textwidth}\small
We denote by \( \Delta : \underline{\phantom{X}} \rightarrow \bullet \otimes \bullet \) a \( \Delta \) map taking an undotted circle to two dotted circles, and so forth. The \( \text{u} / \ell \) superscripts are suppressed for the \( m \) and \( \Delta \) maps, as they do not interact with them.

\begin{equation*}\label{3Eq:dkkdiff1}
	m : \underline{\phantom{X}} \otimes \underline{\phantom{X}} \rightarrow \underline{\phantom{X}} \left\lbrace
	\begin{aligned}
		v_{++} & \xrightarrow{m^{0}} v_+ \qquad & v_{++} & \xrightarrow{m^{+2}} 0 \qquad & v_{++} & \xrightarrow{m^{+4}} 0 \\
		v_{+-}, v_{-+} & \xrightarrow{m^{0}} v_- \qquad & v_{+-}, v_{-+} & \xrightarrow{m^{+2}} 0 \qquad & v_{+-}, v_{-+} & \xrightarrow{m^{+4}} 0 \\
		v_{--} & \xrightarrow{m^{0}} 0 \qquad & v_{--} & \xrightarrow{m^{+2}} 0 \qquad & v_{--} & \xrightarrow{m^{+4}} v_+
	\end{aligned}\right.	
\end{equation*}

\begin{equation*}\label{3Eq:dkkdiff2}
	m : \bullet \otimes \bullet \rightarrow \underline{\phantom{X}} \left\lbrace
	\begin{aligned}
		v_{\dpl \dpl} & \xrightarrow{m^{0}} 0 \qquad & v_{\dpl \dpl} & \xrightarrow{m^{+2}} v_+ \qquad & v_{\dpl \dpl} & \xrightarrow{m^{+4}} 0 \\
		v_{\dpl \dm}, v_{\dm \dpl} & \xrightarrow{m^{0}} v_- \qquad & v_{\dpl \dm}, v_{\dm \dpl} & \xrightarrow{m^{+2}} 0 \qquad & v_{\dpl \dm}, v_{\dm \dpl} & \xrightarrow{m^{+4}} 0 \\
		v_{\dm \dm} & \xrightarrow{m^{0}} 0 \qquad & v_{\dm \dm} & \xrightarrow{m^{+2}} 0 \qquad & v_{\dm \dm} & \xrightarrow{m^{+4}} v_+
	\end{aligned}\right.	
\end{equation*}

\begin{equation*}\label{3Eq:dkkdiff3}
	m : \bullet \otimes \underline{\phantom{X}} \rightarrow \bullet \left\lbrace
	\begin{aligned}
		v_{\dpl +} & \xrightarrow{m^{0}} v_{\dpl} \qquad & v_{\dpl +} & \xrightarrow{m^{+2}} 0 \qquad & v_{\dpl +} & \xrightarrow{m^{+4}} 0 \\
		v_{\dpl -} & \xrightarrow{m^{0}} 0 \qquad & v_{\dpl -} & \xrightarrow{m^{+2}} v_{\dm} \qquad & v_{\dpl -} & \xrightarrow{m^{+4}} 0 \\
		v_{\dm +} & \xrightarrow{m^{0}} v_{\dm} \qquad & v_{\dm +} & \xrightarrow{m^{+2}} 0 \qquad & v_{\dm +} & \xrightarrow{m^{+4}} 0 \\
		v_{\dm -} & \xrightarrow{m^{0}} 0 \qquad & v_{\dm -} & \xrightarrow{m^{+2}} 0 \qquad & v_{\dm -} & \xrightarrow{m^{4}} v_{\dpl}
	\end{aligned}\right.	
\end{equation*}
\noindent\hrulefill
\begin{equation*}\label{3Eq:dkkdiff4}
	\Delta : \underline{\phantom{X}} \rightarrow \underline{\phantom{X}} \otimes \underline{\phantom{X}} \left\lbrace
	\begin{aligned}
		v_{+} & \xrightarrow{\Delta^{0}} v_{+-} + v_{-+} \qquad & v_{\dpl} & \xrightarrow{\Delta^{+2}} 0 \qquad & v_{\dpl} & \xrightarrow{\Delta^{+4}} 0 \\
		v_{-} & \xrightarrow{\Delta^{0}} v_{- -} \qquad & v_{\dm} & \xrightarrow{\Delta^{+2}} 0 \qquad & v_{\dm} & \xrightarrow{\Delta^{+4}} v_{+ +}
	\end{aligned}\right.	
\end{equation*}

\begin{equation*}\label{3Eq:dkkdiff5}
	\Delta : \bullet \rightarrow \bullet \otimes \underline{\phantom{X}} \left\lbrace
	\begin{aligned}
		v_{\dpl} & \xrightarrow{\Delta^{0}} v_{\dpl -} \qquad & v_{\dpl} & \xrightarrow{\Delta^{+2}} v_{\dm +} \qquad & v_{\dpl} & \xrightarrow{\Delta^{+4}} 0 \\
		v_{\dm} & \xrightarrow{\Delta^{0}} v_{\dm -} \qquad & v_{\dm} & \xrightarrow{\Delta^{+2}} 0 \qquad & v_{\dm} & \xrightarrow{\Delta^{+4}} v_{\dpl +}
	\end{aligned}\right.	
\end{equation*}

\begin{equation*}\label{3Eq:dkkdiff6}
	\Delta : \underline{\phantom{X}} \rightarrow \bullet \otimes \bullet \left\lbrace
	\begin{aligned}
		v_{+} & \xrightarrow{\Delta^{0}} v_{\dpl \dm} + v_{\dm \dpl} \qquad & v_{+} & \xrightarrow{\Delta^{+2}} 0 \qquad & v_{+} & \xrightarrow{\Delta^{+4}} 0 \\
		v_{-} & \xrightarrow{\Delta^{0}} 0 \qquad & v_{-} & \xrightarrow{\Delta^{+2}} v_{\dm \dm} \qquad & v_{+} & \xrightarrow{\Delta^{+2}} v_{\dpl \dpl}
	\end{aligned}\right.	
\end{equation*}
\noindent\hrulefill
\begin{equation*}\label{3Eq:dkkdiff7}
	\eta : \underline{\phantom{X}} \rightarrow \underline{\phantom{X}} \left\lbrace
	\begin{aligned}
		v^{\text{u}}_{+} & \xrightarrow{\eta^{0}} v^{\ell}_{+} \qquad & v^{\text{u}}_{\dpl} & \xrightarrow{\eta^{+2}} 0 \qquad & v^{\text{u}}_{\dpl} & \xrightarrow{\eta^{+4}} 0 \\
		v^{\ell}_{+} & \xrightarrow{\eta^{0}} 2 v^{\text{u}}_{-} \qquad & v^{\ell}_{\dpl} & \xrightarrow{\eta^{+2}} 0 \qquad & v^{\ell}_{\dpl} & \xrightarrow{\eta^{+4}} 0 \\
		v^{\text{u}}_{-} & \xrightarrow{\eta^{0}} v^{\ell}_{-} \qquad & v^{\text{u}}_{\dm} & \xrightarrow{\eta^{+2}} 0 \qquad & v^{\text{u}}_{\dm} & \xrightarrow{\eta^{+4}} 0  \\
		v^{\ell}_{-} & \xrightarrow{\eta^{0}} 0 \qquad & v_{\dm \dm} & \xrightarrow{\eta^{+2}} 0 \qquad & v_{\dm \dm} & \xrightarrow{\eta^{+4}} 2 v^{\text{u}}_{\dpl}
	\end{aligned}\right.	
\end{equation*}
The \( \eta \) map changes the superscript globally; for example, \( \eta \otimes \text{id} \left( \vlp \otimes \vlp \right) = 2 \vum \otimes \vup \).

\begin{equation*}\label{3Eq:dkkdiff8}
	\eta : \bullet \rightarrow \bullet \left\lbrace
	\begin{aligned}
		v^{\text{u}}_{\dpl} & \xrightarrow{\eta^{0}} v^{\ell}_{\dpl} \qquad & v^{\text{u}}_{\dpl} & \xrightarrow{\eta^{+2}} 0 \qquad & v^{\text{u}}_{\dpl} & \xrightarrow{\eta^{+4}} 0 \\
		v^{\ell}_{\dpl} & \xrightarrow{\eta^{0}} 0 \qquad & v^{\ell}_{\dpl} & \xrightarrow{\eta^{+2}} 2 v^{\text{u}}_{\dm} \qquad & v^{\ell}_{\dpl} & \xrightarrow{\eta^{+4}} 0 \\
		v^{\text{u}}_{\dm} & \xrightarrow{\eta^{0}} v^{\ell}_{\dm} \qquad & v^{\text{u}}_{\dm} & \xrightarrow{\eta^{+2}} 0 \qquad & v^{\text{u}}_{\dm} & \xrightarrow{\eta^{+4}} 0 \\
		v^{\ell}_{\dm} & \xrightarrow{\eta^{0}} 0 \qquad & v_{\dm \dm} & \xrightarrow{\eta^{+2}} 0 \qquad & v_{\dm \dm} & \xrightarrow{\eta^{+4}} 2 v^{\text{u}}_{\dpl}
	\end{aligned}\right.	
\end{equation*}
\end{minipage}
\caption{Differential components.}\label{3Fig:diffcomp}
\end{figure}

Notice that the maps \( m^0 \), \( \Delta^0 \), and \( \eta^0 \) are all \(j\)- and \(c\)-graded of degree \( 0 \), so that \( \cdkh \left( D, \gamma \right) \) is a trigraded chain complex. The chain homotopy equivalence class of \( \cdkh \left( D, \gamma \right) \) depends only on the link represented by \( D \).

\begin{theorem}[Theorem \(3.4\) of \cite{Rushworth2017b}]
The chain homotopy equivalence class of \( \cdkh \left( D , \gamma \right) \) is an invariant of \( L \), the link represented by \( D \), so that its homology is also. We denote this homology \( \dkh \left( L, \gamma \right) \) and refer to it as the \emph{doubled Khovanov homology of \( L \) with respect to \( \gamma \)}.
\end{theorem}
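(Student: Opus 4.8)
The plan is to reduce to invariance under the three Reidemeister moves. Since two diagrams on \( \Sigma_g \) represent the same link if and only if they are related by a finite sequence of Reidemeister moves performed inside disc neighbourhoods, and since \( \cdkh(-,\gamma) \) is assembled from a cube of smoothings in exactly the manner of classical Khovanov homology, it suffices to produce, for each Reidemeister move, a local chain homotopy equivalence between the old and new complexes and to verify that it is compatible with the rest of the diagram. The undotted case (\( \gamma = 0 \), i.e.\ doubled Khovanov homology) is handled in \cite{Rushworth2017}; the task here is to incorporate the dotting and to confirm that the extra edge maps \( \eta^0 \) do not disrupt the standard homotopies.

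The first step is to localise the effect of \( \gamma \). A Reidemeister move is supported in an embedded disc \( B \subset \Sigma_g \), and for every vertex of the cube of smoothings the circles of the corresponding smoothing that lie inside \( B \) are null-homotopic in \( \Sigma_g \); hence they have zero image under \( \gamma \) and carry no dots. The arcs of a smoothing meeting \( \partial B \) are closed up identically outside \( B \) before and after the move, so the circles that cross \( \partial B \) are in natural bijection and represent the same class in \( H_1(\Sigma_g;\Z_2) \); in particular their dotting is unchanged. Consequently, in the local picture near the move every relevant circle is undotted, the local instances of \( m^0 \), \( \Delta^0 \), \( \eta^0 \) coincide with the undotted maps of \Cref{3Fig:diffcomp}, and the dots reside only on the tensor factors associated to global circles, on which any local homotopy acts as the identity.

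With this in hand, the second step is to run, move by move, the doubled analogue of the classical argument: a delooping/Gaussian-elimination collapse of the acyclic summand created by a Reidemeister \(1\) kink (together with the grading shift by \( wr \) and \( n_- \) that \Cref{3Def:gradings} builds in), cancellation of the acyclic subcomplex for Reidemeister \(2\), and the standard zig-zag homotopy for Reidemeister \(3\). Each of these homotopies has a block form with respect to the \( \mathrm{u}/\ell \) decomposition — built from the classical Khovanov homotopies and from \( \eta^0 \) — and, because it is supported on the local (undotted) tensor factors and is the identity on the global (possibly dotted) factors, it is automatically a chain map, respectively a chain homotopy, for all of \( \cdkh(D,\gamma) \). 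Since \( m^0 \), \( \Delta^0 \), \( \eta^0 \) are \( (j,c) \)-graded of degree \(0\), all of this takes place within the trigraded category, so the chain homotopy equivalence class of \( \cdkh(D,\gamma) \) is a link invariant, and therefore so is its homology \( \dkh(L,\gamma) \).

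The main obstacle is the ingredient hidden inside the step above: checking that the new \( \eta^0 \) edge maps (the single cycle smoothings responsible for the doubling) are compatible with the classical homotopies — concretely, that \( m^0, \Delta^0 \) form the usual Frobenius structure, that \( \eta^0 \) is a module map over \( m^0 \) and a comodule map over \( \Delta^0 \), and that \( \eta^0 \) interacts with itself and with \( m^0, \Delta^0 \) in the way forced by \( d^2 = 0 \), so that the doubled homotopies are genuine homotopies and not merely homotopies of the diagonal (classical) part. This is the computational heart of the argument and is carried out in \cite{Rushworth2017,Rushworth2017b}; the \( \gamma \)-dotting, by contrast, contributes nothing beyond the localisation remark of the first step.
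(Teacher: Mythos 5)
There is a genuine gap, and it sits exactly in your localisation step. You claim that near a Reidemeister disc ``every relevant circle is undotted, the local instances of \( m^0 \), \( \Delta^0 \), \( \eta^0 \) coincide with the undotted maps, and the dots reside only on global circles on which any local homotopy acts as the identity.'' Circles contained in the disc are indeed null-homotopic and hence undotted, but the through-strands crossing \( \partial B \) can perfectly well be dotted, and they \emph{do} participate in the elementary merges, splits and single cycle smoothings out of which the Reidemeister homotopies are built (the R1 kink circle merges into a through-strand; the R2 and R3 homotopies are compositions of \( m \)'s and \( \Delta \)'s that change the labels on through-strands, so they are certainly not the identity on the ``global'' tensor factors). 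This matters because the degree-zero dotted maps are \emph{not} the undotted maps with decorations carried along: from \Cref{3Fig:diffcomp} one has, for instance, \( m^0 ( v_{\dpl} \otimes v_- ) = 0 \) whereas \( m^0 ( v_+ \otimes v_- ) = v_- \), and \( \Delta^0 ( v_{\dpl} ) = v_{\dpl -} \) whereas \( \Delta^0 ( v_+ ) = v_{+-} + v_{-+} \). Indeed the paper stresses that the assignment is not a (Turaev--Turner) TQFT, so your appeal to ``the usual Frobenius structure'' for the dotted factors is not available. Consequently \( \cdkh ( D, \gamma ) \) is genuinely a different complex from the undotted doubled Khovanov complex -- that is the whole point of the \( c \)-grading and of total nontriviality -- and the classical/doubled homotopies cannot be imported verbatim by declaring the dotting invisible.

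So the concluding assertion that ``the \( \gamma \)-dotting contributes nothing beyond the localisation remark'' is precisely what fails: the content of Theorem \(3.4\) of \cite{Rushworth2017b} (which this paper cites rather than reproves) is the verification that the Reidemeister chain homotopy equivalences can be rebuilt using the dotted maps \( m^0 \), \( \Delta^0 \), \( \eta^0 \), in all the dotting configurations that can occur for the strands entering the disc, and that the resulting equivalences are still \( j \)- and \( c \)-graded of degree \(0\). Your sketch correctly reduces the problem to Reidemeister moves and correctly isolates the \( \eta^0 \)-compatibility as a computational ingredient to be taken from the references, but it waves away the second, equally essential, case analysis over dotted through-strands; as written, the homotopies you propose would simply be wrong on states such as \( v_{\dpl} \otimes v_- \). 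To repair the argument you would either have to redo the R1--R3 checks with the dotted maps, or exhibit a structural mechanism (e.g.\ an isomorphism of the dotted local algebra with a known reduced/unreduced mixture) explaining why the undotted homotopies deform to dotted ones -- neither of which follows from the localisation remark alone.
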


\subsection{Perturbations}\label{3Subsec:perturb}
For our purposes we do not need the full doubled Khovanov homology of a link, only a perturbation of it. As in the case of classical Khovanov homology we add terms to the differential to produce the desired perturbed theory.

\begin{definition}[Totally reduced homology]\label{3Def:trh}
Let \( D \) be a diagram of an oriented link \( L \hookrightarrow \Sigma_g \times I \). Given \( \gamma \in H^1 \left( \Sigma_g ; \Z_2 \right) \) let \( \cdkh '' \left( D , \gamma \right) \) denote the chain complex whose chain spaces are those of \( \cdkh \left( D , \gamma \right) \) but with an altered differential. This differential is obtained from that of \( \cdkh \left( D , \gamma \right) \) by adding the terms denoted \( m^{+2} \), \( m^{+4} \), \( \Delta^{+2} \), \( \Delta^{+4} \), \( \eta^{+2} \) and \( \eta^{+4} \) in \Cref{3Fig:diffcomp}; we write \( m '' = m^{0} + m^{2} + m^{4} \), and similarly for \( \Delta '' \), \( \eta '' \). The chain complex \( \cdkh '' \left( D , \gamma \right) \) is known as the \emph{totally reduced complex of \( D \) with respect to \( \gamma \)}.  
\end{definition}

Notice that the maps \( m^{+2} \), \( \Delta^{+2} \), and \( \eta^{+2} \) are \(j\)-graded of degree \(0\), and \(c\)-graded of degree \( +2 \). The maps \( m^{+4} \), \( \Delta^{+4} \), and \( \eta^{+4} \) \(j\)-graded of degree \(+4\) and \(c\)-graded of degree \(0\). It follows that \( \cdkh '' \left( D , \gamma \right) \) is filtered in both the \(j\)-\ and \(c\)-gradings. We abuse notation and denote by \( j \) and \( c \) the induced filtration gradings on \( \cdkh '' \left( D , \gamma \right) \).

\begin{theorem}[Theorem \(3.8\) of \cite{Rushworth2017b}]\label{3Thm:trhinv}
Let \( D \) be a diagram of an oriented link \( L \hookrightarrow \Sigma_g \times I \). The chain homotopy equivalence class of \( \cdkh '' \left( D , \gamma \right) \) is an invariant of \( L \), so that its homology is also. This homology is denoted \( \dkh '' \left( L , \gamma \right) \), and known as the \emph{totally reduced homology of \( L \) with respect to \( \gamma \)}.
\end{theorem}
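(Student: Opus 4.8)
The plan is to deduce invariance of \( \cdkh'' ( D , \gamma ) \) from the invariance of the unperturbed doubled Khovanov complex \( \cdkh ( D , \gamma ) \) (Theorem~3.4 of \cite{Rushworth2017b}) by treating the totally reduced differential as a \emph{filtered perturbation} of the doubled Khovanov differential, and then transporting the Reidemeister-move chain homotopy equivalences through that perturbation.

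First I would make the filtered structure explicit. By \Cref{3Def:trh} the chain spaces of \( \cdkh'' ( D , \gamma ) \) coincide with those of \( \cdkh ( D , \gamma ) \), and the differential is \( d'' = d^0 + \phi \), where \( d^0 = m^0 + \Delta^0 + \eta^0 \) is the doubled Khovanov differential and \( \phi \) collects the terms \( m^{+2}, m^{+4}, \Delta^{+2}, \Delta^{+4}, \eta^{+2}, \eta^{+4} \) of \Cref{3Fig:diffcomp}. Filter the complex by \( j + c \). The grading data recorded after \Cref{3Def:trh} show that \( d^0 \) preserves \( j + c \) while every term of \( \phi \) strictly raises it (the \( +2 \) terms by \(2\), the \( +4 \) terms by \(4\)); since the complex is finite-dimensional this filtration is bounded, and its associated graded complex is exactly \( \cdkh ( D , \gamma ) \). (That \( (d'')^2 = 0 \) is part of the construction in \cite{Rushworth2017b}; grouped by filtration level it amounts to a finite list of local identities among the maps of \Cref{3Fig:diffcomp} on one and two circles.)

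Next I would transport the Reidemeister equivalences. For a Reidemeister move relating diagrams \( D \) and \( D' \) on \( \Sigma_g \), the proof of Theorem~3.4 of \cite{Rushworth2017b} supplies chain homotopy equivalences between \( \cdkh ( D , \gamma ) \) and \( \cdkh ( D' , \gamma ) \), built from the local maps of \Cref{3Fig:diffcomp}. I would define maps between \( \cdkh'' ( D , \gamma ) \) and \( \cdkh'' ( D' , \gamma ) \) by the same recipe, replacing \( m^0, \Delta^0, \eta^0 \) by the full perturbed maps \( m'', \Delta'', \eta'' \). Since the perturbation terms only raise \( j + c \), these are filtered maps whose associated graded maps are precisely the Theorem~3.4 equivalences; that they are genuine chain maps for \( d'' \) — and that the candidate homotopies are genuine homotopies for \( d'' \) — again follows by splitting the relevant identities by filtration level and invoking the local relations of \cite{Rushworth2017b}. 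Finally I would invoke the standard fact that a filtered chain map between bounded filtered complexes over a field whose associated graded is a chain homotopy equivalence is itself a filtered chain homotopy equivalence (boundedness makes the required successive corrections terminate), concluding \( \cdkh'' ( D , \gamma ) \simeq \cdkh'' ( D' , \gamma ) \). Invariance under reversing the orientation of a component is, as in the unperturbed theory, an overall grading shift coming from the change in \( n_- \) and \( wr ( D ) \), which does not interact with \( d'' \). As any two diagrams of \( L \) on \( \Sigma_g \) differ by finitely many Reidemeister moves supported in discs, the chain homotopy equivalence class of \( \cdkh'' ( D , \gamma ) \), and hence its homology \( \dkh'' ( L , \gamma ) \), depends only on \( L \) and \( \gamma \).

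I expect the bookkeeping in the second step to be the main obstacle: one must verify that the perturbed local pieces really do assemble into chain maps and chain homotopies for \( d'' \), i.e. check the ``mixed'' local relations pairing \( m^0, \Delta^0, \eta^0 \) with their \( +2 \) and \( +4 \) perturbations. These are finitely many but numerous local computations, largely inherited from \cite{Rushworth2017b}; everything else is formal once the filtered structure is in place and the perturbation lemma is in hand. An alternative, more computational route — in the spirit of Lee's treatment of her deformation of Khovanov homology — would be to write down the Reidemeister chain homotopies for \( d'' \) explicitly and verify them directly, bypassing the abstract lemma.
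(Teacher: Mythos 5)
The paper itself contains no proof of this statement: it is imported verbatim as Theorem \(3.8\) of \cite{Rushworth2017b}, so your proposal can only be measured against that reference, not against anything in this text. Your framing is sound as far as it goes. Since \( m^0, \Delta^0, \eta^0 \) have \( (j,c) \)-bidegree \( (0,0) \) while the \( +2 \) maps have bidegree \( (0,+2) \) and the \( +4 \) maps bidegree \( (+4,0) \), the single filtration by \( j+c \) is strictly raised by every perturbation term, its associated graded is \( \cdkh \left( D , \gamma \right) \), and over \( \Q \) with finite-dimensional chain spaces the closing step (a filtered map inducing an equivalence on associated gradeds is a filtered homotopy equivalence) is standard. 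Completing the argument this way would in fact give slightly more than the statement asks: filtered equivalences, hence invariance of the induced \( j \)- and \( c \)-filtration gradings on \( \dkh'' \), which is what the paper actually uses later (e.g.\ in \Cref{4Def:tnt} and \Cref{3Lem:missing}).

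The gap is that the decisive step is asserted rather than proved. Knowing that the associated graded complexes of \( \cdkh'' \left( D , \gamma \right) \) and \( \cdkh'' \left( D' , \gamma \right) \) are homotopy equivalent does not determine the filtered (or even absolute) homotopy type of the perturbed complexes: a two-dimensional complex with a filtration-raising differential and the same space with zero differential have identical associated gradeds but different homology. So the abstract machinery only starts once you have produced an honest chain map for \( d'' \) lifting the Theorem \(3.4\) equivalence, and your recipe of substituting \( m'', \Delta'', \eta'' \) into the unperturbed Reidemeister maps is a guess: whether it yields chain maps and homotopies for \( d'' \) is precisely the content of the theorem. The required identities mix the \( 0 \)-, \( +2 \)- and \( +4 \)-components and are not among the relations already established for \( \cdkh \left( D , \gamma \right) \); the Reidemeister equivalences also involve unit/counit (birth/death) maps that do not appear in \Cref{3Fig:diffcomp} at all; and one must track how the dotting with respect to \( \gamma \) behaves on circles created or destroyed inside the disc supporting the move. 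In general such naive substitutions need higher-filtration correction terms, so the formula may simply be wrong as written. Until those verifications are carried out (or the explicit Lee-style computation of your alternative route is done), the proposal is an outline of the standard strategy rather than a proof.
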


In \cite[Section \(3\)]{Rushworth2017} a homology theory of virtual links is constructed, analogous to the Lee homology of classical links. A virtual link is an equivalence class of links in thickened surfaces, up to self-diffeomorphism of the surface and certain permitted handle additions. As such, the homology theory constructed in \cite{Rushworth2017} descends to a well-defined homology theory of links in thickened surfaces. Given an oriented link \( L \hookrightarrow \Sigma_g \times I \) we denote by \( \dkh ' ( L ) \) the \emph{doubled Lee homology of \( L \)}. For full details see \cite[Section \(3\)]{Rushworth2017}.

\begin{proposition}\label{3Prop:leeiso}
Forgetting the \(c\)-grading, \( \dkh '' \left( L , \gamma \right) \) is isomorphic to \( \dkh ' ( L ) \).
\end{proposition}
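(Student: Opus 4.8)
The plan is to prove the isomorphism by a direct two-step comparison of chain complexes, with no homological-algebra input beyond the invariance statement of \Cref{3Thm:trhinv}. The first step is to \emph{forget the dotting}: define a map \( \Phi \colon \cdkh '' \left( D, \gamma \right) \to \cdkh '' \left( D, 0 \right) \) which, on each circle carrying a dot, applies the relabelings \( v^{\mathrm{u}/\ell}_{\dpl} \mapsto v^{\mathrm{u}/\ell}_{+} \) and \( v^{\mathrm{u}/\ell}_{\dm} \mapsto v^{\mathrm{u}/\ell}_{-} \), is the identity on undotted circles, and is extended as a tensor-product isomorphism at each vertex of the cube and then over the whole cube. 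By \Cref{3Def:gradings} the dot decoration affects the \( c \)-grading of a state but not its \( i \)- or \( j \)-grading (the \( j \)-grading formula counts \( v_{\overset{(\bullet)}{\pm}} \) irrespective of the dot), so \( \Phi \) is a bijective \( (i,j) \)-graded linear map that does not respect the \( c \)-grading — which is exactly why the statement must pass to the quotient by that grading. To see that \( \Phi \) is a chain map one checks, component by component, that each of \( m'' \), \( \Delta'' \), \( \eta'' \) has the same matrix in the dotted and undotted cases after the relabeling; this is a direct inspection of \Cref{3Fig:diffcomp}. For instance, summing the \( m^{0}, m^{+2}, m^{+4} \) rows on \( \bullet \otimes \bullet \to \underline{\phantom{X}} \) gives \( m''(v_{\dpl\dpl}) = v_+ \), \( m''(v_{\dpl\dm}) = m''(v_{\dm\dpl}) = v_- \), \( m''(v_{\dm\dm}) = v_+ \), which under \( \dpl \leftrightarrow +,\ \dm \leftrightarrow - \) is the undotted \( m'' \) on \( \underline{\phantom{X}}\otimes\underline{\phantom{X}} \to \underline{\phantom{X}} \); the same holds for the mixed \( m \), for all dotted variants of \( \Delta'' \), and for \( \eta'' \) (including the factors of \( 2 \)). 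The cube-edge signs are assigned in the standard way and are unaffected by the relabeling, so \( \Phi \) is a chain isomorphism and \( \dkh '' ( L, \gamma ) \cong \dkh '' ( L, 0 ) \) as \( (i,j) \)-graded groups.

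The second step is to identify \( \cdkh '' ( D, 0 ) \) with the doubled Lee complex \( \cdkh ' ( D ) \) of \cite[Section 3]{Rushworth2017}. That complex has the doubled Khovanov chain spaces together with the Lee-deformed structure maps, which on \( \mathcal{A} = \Q[X]/X^2 \) are diagonalised by the generators \( \mathbf{a} = v_+ + v_- \), \( \mathbf{b} = v_+ - v_- \). A short computation with \Cref{3Fig:diffcomp} shows that the undotted totally reduced maps \( m'' = m^{0} + m^{+4} \), \( \Delta'' = \Delta^{0} + \Delta^{+4} \), \( \eta'' = \eta^{0} + \eta^{+4} \) are precisely these deformed maps: \( m''(\mathbf{a}\otimes\mathbf{a}) = 2\mathbf{a} \), \( m''(\mathbf{a}\otimes\mathbf{b}) = 0 \), \( m''(\mathbf{b}\otimes\mathbf{b}) = 2\mathbf{b} \); \( \Delta''(\mathbf{a}) = \mathbf{a}\otimes\mathbf{a} \), \( \Delta''(\mathbf{b}) = -\mathbf{b}\otimes\mathbf{b} \); and \( \eta'' \) is diagonal in the \( \mathbf{a}/\mathbf{b} \) splitting on each of the \( \mathrm{u}/\ell \) copies. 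Thus the change of basis to \( \{\mathbf{a},\mathbf{b}\} \) on every circle identifies \( \cdkh '' ( D, 0 ) \) with \( \cdkh ' ( D ) \), possibly after rescaling generators to match the normalization fixed in \cite{Rushworth2017}, giving \( \dkh '' ( L, 0 ) \cong \dkh ' ( L ) \). Composing the isomorphisms of the two steps yields the proposition.

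The main obstacle is purely bookkeeping rather than conceptual: one must verify the component-by-component agreement of the differentials for \emph{every} variant in \Cref{3Fig:diffcomp} — all dotted/undotted combinations of \( m \) and \( \Delta \), and the superscript-shuffling \( \eta \) maps with their factors of \( 2 \) — and then confirm that the Koszul and edge signs, the grading shift \( \{-1\} \) between the two copies of \( \mathcal{A}^{\otimes k} \), and the precise normalization of the generators are all consistent with the definition of the doubled Lee complex in \cite{Rushworth2017}. Once this is pinned down the isomorphism is forced; no filtration or spectral-sequence argument is required, and the \( c \)-grading genuinely cannot survive because \( \Phi \) moves dotted generators to undotted ones of different \( c \)-degree.
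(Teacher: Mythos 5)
Your proposal is correct and follows essentially the same route as the paper, whose proof simply says to compare the differential components of the doubled Lee complex with those in \Cref{3Fig:diffcomp} and to note that the \(j\)-grading of \Cref{3Def:gradings} is independent of the dotting; your two-step relabelling argument is just this comparison written out in detail.
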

\begin{proof}
Compare the differential components of doubled Lee homology, given in \cite[Definition \(3.1\)]{Rushworth2017}, to those of the totally reduced homology (given in \Cref{3Fig:diffcomp}). Also notice that the \(j\)-grading, as defined in \Cref{3Def:gradings}, does not depend on the dotting with respect to \( \gamma \).
\end{proof}

\begin{remark}
In spite of \Cref{3Prop:leeiso}, the totally reduced homology is not an invariant of virtual links. Specifically, the \(c\)-grading is not invariant under self-diffeomorphism of the surface or the permitted handle additions.
\end{remark}

In \cite[Section \(3\)]{Rushworth2017} distinguished generators of doubled Lee homology are described, which yield generators of the totally reduced homology by \Cref{3Prop:leeiso}. These generators come in quadruples; for the remainder of this work \( x^{\text{u}}, \overline{x}^{\text{u}}, x^{\ell}, \overline{x}^{\ell} \) shall denote such a quadruple.

\begin{remark}
The reader familiar with the Lee homology of classical links will recall that there are distinguished generators, \( \sg \), \(\overline{\sg}\), corresponding to alternately colourable smoothings of the argument diagram. The generators \( x^{\text{u/l}} \) above are \( x^{\text{u/l}} = \sg \pm \overline{\sg} \), and \( \overline{x}^{\text{u/l}} = \sg \mp \overline{\sg} \).
\end{remark}

\subsection{Functoriality}\label{3Subsec:func}
Although the totally reduced homology is not constructed using a TQFT, it is functorial with respect to link cobordism. For full details see \cite[Section \(3.2\)]{Rushworth2017} and \cite[Section \(3.4\)]{Rushworth2017b}.

\begin{definition}\label{3Def:cobmap}
Let \( \left( S, \Sigma_g \times I \right) \) be a strict concordance between oriented links \( L_1 \hookrightarrow \Sigma_{g} \times I \) and \( L_2 \hookrightarrow \Sigma_{g} \times I \). There is a map \( \phi_S : \dkh '' \left( L_1 , \gamma \right) \rightarrow \dkh '' \left( L_2 , \gamma \right) \) induced by \( \left( S, \Sigma_g \times I \right) \), for all \( \gamma \in H^1 \left( \Sigma_g ; \Z_2 \right) \).

Further, if \( \left( S , M \right) \) is a cobordism between \( L_1 \hookrightarrow \Sigma_{g_1} \times I \) and \( L_2 \hookrightarrow \Sigma_{g_2} \times I \), there is a map \( \widetilde{\phi}_S : \dkh ' ( L_1 ) \rightarrow \dkh ' ( L_2 ) \).
\end{definition}
Similar to the case of the cobordism maps on Lee homology, \( \phi_S \) is filtered of \(j\)-degree \( \chi ( S ) \), and \( c \)-degree \( \frac{1}{2} \chi ( S ) \).

By construction, the map assigned to a cobordism factors through the maps assigned to cobordisms it may be decomposed into.
\begin{proposition}\label{3Prop:factoring}
Let \( \left( S_1 , M_1 \right) \) be a cobordism from \( L_1 \hookrightarrow \Sigma_{g_1} \times I \) to \( L_2 \hookrightarrow \Sigma_{g_2} \times I \), and \( \left( S_2 , M_2 \right) \) a cobordism from \( L_2 \hookrightarrow \Sigma_{g_2} \times I \) to \( L_3 \hookrightarrow \Sigma_{g_3} \times I \). Then \( ( S , M ) = \left( S_1 \cup S_2 , M_1 \cup M_2 \right) \) is a cobordism from \( L_1 \) to \( L_3 \), and \( \widetilde{\phi}_S = \widetilde{\phi}_{S_2} \circ \widetilde{\phi}_{S_1} \). If \( M_1 = M_2 = \Sigma_{g_1} \times I \) then \( \phi_S = \phi_{S_2} \circ \phi_{S_1} \) also. 
\end{proposition}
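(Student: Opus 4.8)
The plan is to reduce the statement to the decomposition-invariance of the cobordism maps already built in \cite{Rushworth2017,Rushworth2017b}. First I would verify that $(S,M)$ is genuinely a cobordism from $L_1$ to $L_3$: gluing $M_1$ and $M_2$ along their common boundary component $\Sigma_{g_2}$ produces a compact orientable $3$-manifold $M$ with $\partial M = \Sigma_{g_1} \sqcup \Sigma_{g_3}$ (after the usual smoothing of corners), and since $M \times I = (M_1 \times I) \cup_{\Sigma_{g_2} \times I} (M_2 \times I)$, the surfaces $S_1$ and $S_2$, which agree along $L_2 \subset \Sigma_{g_2} \times I$, glue to a compact orientable surface $S$ properly embedded in $M \times I$ with $\partial S = L_1 \sqcup L_3$. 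This is routine point-set topology.

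The substantive step is to recall how $\widetilde{\phi}_S$ is computed: one chooses a Morse function on $M \times I$ compatible with the $I$-direction, slices $S$ into elementary cobordisms (births, deaths, saddles, (de)stabilising handle attachments of the ambient surface, single cycle smoothings, and so on), assigns to each the corresponding elementary map on doubled Lee homology, and composes. The key input --- established as part of the functoriality in \cite{Rushworth2017,Rushworth2017b} --- is that the resulting composite does not depend on the chosen decomposition. Granting this, I would choose Morse functions on $M_1 \times I$ and $M_2 \times I$ realising decompositions of $S_1$ and $S_2$; arranging that near $\Sigma_{g_2} \times I$ both are products without critical points, these glue to a Morse function on $M \times I$ whose induced decomposition of $S$ is the concatenation of the two. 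Reading off the associated composite of elementary maps, the factors coming from the $S_1$-part assemble to $\widetilde{\phi}_{S_1}$ and those from the $S_2$-part to $\widetilde{\phi}_{S_2}$, so $\widetilde{\phi}_S = \widetilde{\phi}_{S_2} \circ \widetilde{\phi}_{S_1}$ for this particular decomposition, hence --- by invariance --- for every decomposition.

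For the final clause, when $M_1 = M_2 = \Sigma_{g_1} \times I$ we also have $M = \Sigma_{g_1} \times I$, so the refined map $\phi_S$ (which retains the $c$-grading information and, by \Cref{3Def:cobmap}, is only defined for strict cobordisms) is available for $S$, $S_1$, and $S_2$ simultaneously. The same concatenation-of-decompositions argument applies verbatim, now staying within strict cobordisms throughout, and yields $\phi_S = \phi_{S_2} \circ \phi_{S_1}$.

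The main obstacle I anticipate is purely bookkeeping: ensuring the two Morse functions can be glued (via collars of $\Sigma_{g_2}$ in $M_1$ and $M_2$ on which everything is a product) so that the elementary decompositions concatenate cleanly, and checking that the sign and orientation conventions used to define the elementary maps are consistent across the gluing. Since invariance of $\widetilde{\phi}_S$ (respectively $\phi_S$) under change of decomposition is already in hand, no new chain-level computation is required.
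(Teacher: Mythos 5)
Your proposal is correct and matches the paper's reasoning: the paper offers no separate proof, asserting the factoring holds ``by construction'' because the cobordism maps of \cite{Rushworth2017,Rushworth2017b} are defined by decomposing into elementary pieces and composing, independently of the chosen decomposition. Your elaboration --- gluing the cobordisms, concatenating decompositions via product collars near \( \Sigma_{g_2} \), and invoking decomposition-invariance, with the strict case handled identically for \( \phi_S \) --- is exactly the argument the paper leaves implicit.
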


Concordances induce isomorphisms on doubled Lee homology. If a concordance is strict, it induces an isomorphism on the totally reduced homology.
\begin{theorem}[Theorem \(3.21\) of \cite{Rushworth2017}, Proposition \(3.15\) of \cite{Rushworth2017b}]\label{3Thm:isos}
If \( \left( S , M \right) \) is a concordance then \( \widetilde{\phi}_S \) is an isomorphism. If \( M = \Sigma_g \times I \) then \( \phi_S \) is an isomorphism also.
\end{theorem}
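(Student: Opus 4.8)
The plan is to prove both statements by breaking the concordance into elementary pieces, checking each piece separately, and reassembling via functoriality (\Cref{3Prop:factoring}), using the distinguished generators \( x^{\text{u}},\overline{x}^{\text{u}},x^{\ell},\overline{x}^{\ell} \) of doubled Lee homology (which also generate the totally reduced homology, by \Cref{3Prop:leeiso}) as the device for recognising isomorphisms. As a preliminary sanity check, since a concordance surface \( S \) is a disjoint union of annuli we have \( \chi(S)=0 \), so \( \widetilde\phi_S \) --- and \( \phi_S \) when it is defined --- is a filtered map of \( j \)- and \( c \)-degree \( 0 \); the issue is only bijectivity. To decompose \( (S,M) \) I would fix a Morse function on \( M \) and a compatible one on \( S \), exhibiting \( (S,M) \) as a composite of elementary cobordisms of three types: (a) ambient isotopies of \( S \) inside a product piece \( \Sigma_g\times I\times I \) (Reidemeister moves on diagrams, isotopies of the surface); (b) \emph{surface (de)stabilizations}, i.e.\ a \(3\)-dimensional \(1\)- or \(2\)-handle added to \( M \), with \( S \) meeting the relevant level in the complement of the (co)core; and (c) births, deaths, and orientable saddles of \( S \) inside a product piece. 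Since \( S \) is orientable, no once-punctured M\"obius pieces occur, so the elementary cobordism maps involve only \( \iota,\epsilon,m,\Delta \) and not \( \eta \).

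Next I would analyse each type on doubled Lee homology. Type (a) induces an isomorphism by invariance of \( \dkh' \) (the Lee-homology analogue of \Cref{3Thm:trhinv}). Type (b) induces an isomorphism because \( \dkh' \) is, by construction, an invariant of virtual links and hence is left unchanged by precisely the handle additions occurring in type (b); one checks that the cobordism map of such a piece coincides with this identification isomorphism. Type (c) is the substantive point: each of \( \iota,\epsilon,m,\Delta \) sends a distinguished generator of \( \dkh' \) to a (possibly zero) scalar multiple of a distinguished generator of the target --- \( \iota \) appends the appropriate generator of \( \mathcal A \), \( \epsilon \) contracts against one, and \( m,\Delta \) act on \( \{x,\overline x\} \) in each superscript as Lee (co)multiplication acts on \( \{\sg,\overline{\sg}\} \), the output being a generator or zero according to whether orientations match across the saddle --- so the composite \( \widetilde\phi_S \) carries the set of distinguished generators of \( \dkh'(L_1) \) into the set of scalar multiples of distinguished generators of \( \dkh'(L_2) \).

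It then remains to pin down the scalars using the annulus structure of \( S \). The annuli give a bijection between the components --- hence the orientations, hence the distinguished generators --- of \( L_1 \) and \( L_2 \), and every orientation of \( L_1 \) extends over \( S \), since an orientation of one boundary circle of an annulus extends to the whole annulus. After bringing cancelling births and deaths of \( S \) together with the saddles that cancel them (a standard normal-form reorganisation of the Morse function on \( S \)), the type-(c) pieces occur only as \((\text{birth followed by a merging saddle})\) and \((\text{splitting saddle followed by a death})\) units --- trivial-tube/neck-cutting moves --- each inducing an isomorphism that acts as \( \pm1 \) on distinguished generators. Hence \( \widetilde\phi_S \) carries the distinguished-generator basis of \( \dkh'(L_1) \) bijectively, up to nonzero scalars, onto that of \( \dkh'(L_2) \), and so is an isomorphism.

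Finally, if \( M=\Sigma_g\times I \) then \( \phi_S \) is defined (\Cref{3Def:cobmap}) and there are no type-(b) pieces, so by \Cref{3Prop:leeiso} the distinguished generators lift to \( \dkh''(L_i,\gamma) \); the only extra point is that the additional differential terms \( m^{+2},m^{+4},\Delta^{+2},\Delta^{+4},\eta^{+2},\eta^{+4} \) of \Cref{3Fig:diffcomp} also act predictably on distinguished generators, after which the bookkeeping above applies to give that \( \phi_S \) is an isomorphism. I expect the main obstacle to be exactly the type-(c) bookkeeping of the third paragraph: verifying that in a concordance the births, deaths, and saddles really do assemble into isomorphism-inducing units, and tracking the distinguished generators --- with their signs and dotting --- correctly through them. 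This is the content imported from \cite[Theorem 3.21]{Rushworth2017} and \cite[Proposition 3.15]{Rushworth2017b}.
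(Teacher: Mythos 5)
There is a genuine gap, and it sits exactly where you suspected. The paper itself does not reprove this statement (it imports it from \cite[Theorem 3.21]{Rushworth2017} and \cite[Proposition 3.15]{Rushworth2017b}), but the imported argument is Rasmussen-style, and your third paragraph does not reproduce it correctly. Two problems. First, your claim that each of \( \iota,\epsilon,m,\Delta \) sends a distinguished generator to a scalar multiple of a distinguished generator fails for births: in the diagonal basis \( \lbrace r,g \rbrace \) the unit is \( \iota(1)=v_+=r+g \), so a birth sends a canonical generator to a \emph{sum of two} canonical generators (one for each colour of the new circle), not a multiple of one. Second, the device you use to repair this --- rearranging the movie so that every birth is immediately cancelled by a merge saddle and every split saddle by a death, i.e.\ so that type-(c) pieces occur only as trivial-tube units --- is unjustified and false in general. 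Morse theory on \( S \) lets you order critical points by index, but it does not let you geometrically cancel births against saddles inside \( M\times I \) for an arbitrary concordance; asserting such a normal form amounts to assuming the concordance is ribbon-like, which is precisely the kind of assumption this circle of ideas (Slice-Ribbon, ascent vs.\ descent) is designed to avoid.

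The correct route, and the one the cited proofs follow, needs no normal form: work throughout in the diagonal basis of \Cref{3Eq:rgbasis}, where every elementary cobordism map is ``monochromatic'' --- \( m''(r\otimes g)=0 \), while merges, splits, births and deaths of matching colour contribute nonzero scalars. Consequently \( \widetilde\phi_S \) applied to the canonical generator determined by a colouring of the circles of \( L_1 \) equals a sum, over colourings of the components of \( S \) extending that colouring, of nonzero multiples of the canonical generators of \( L_2 \) determined by the induced boundary colourings (this is the analogue of Rasmussen's Proposition 4.1: it only requires every component of \( S \) to have a boundary component in \( L_1 \)). For a concordance each component of \( S \) is an annulus with one boundary circle in each of \( L_1 \) and \( L_2 \), so each colouring of \( L_1 \) extends uniquely and the extensions biject colourings of \( L_1 \) with colourings of \( L_2 \); hence \( \widetilde\phi_S \) carries a basis bijectively to nonzero multiples of a basis and is an isomorphism. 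Your remaining structure is sound: types (a) and (b) are handled as you say (and for strict or pseudostrict pieces there are no type-(b) moves to worry about), your observation that orientability of \( S \) excludes \( \eta \)-type saddles is correct, and in the strict case \Cref{3Eq:rgbasis} shows the full differentials and cobordism maps act diagonally regardless of the dotting, so the same colouring argument runs verbatim for \( \phi_S \) on \( \dkh''(-,\gamma) \), compatibly with \Cref{3Prop:leeiso} and \Cref{3Prop:factoring}.
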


In addition to strict cobordisms, the totally reduced homology enjoys functoriality with respect to pseudostrict cobordisms. To establish this, we show that if \( ( S, M ) \) is a pseudostrict cobordism, passing a critical point of \( M \) does not affect the totally reduced homology. We may then concatenate the maps assigned to the strict pieces of \( ( S, M ) \).
\begin{proposition}\label{3Prop:pseudoiso}
Let \( L_1 \hookrightarrow \Sigma_{g} \times I \) and \( L_2 \hookrightarrow \Sigma^{\prime}_{g} \times I \) be links and \( ( S , M ) \) a pseudostrict concordance between them, such that \( S \) is a product cobordism and \( M \) contains exactly one critical point. Then there is an isomorphism
\begin{equation}\label{3Eq:homiso}
	f : H^1 \left( \Sigma_{g} ; \Z_2 \right) \longrightarrow H^1 \left( \Sigma^{\prime}_{g} ; \Z_2 \right)
\end{equation}
and the chain complexes \( \cdkh '' ( D_1 , \gamma ) \) and \( \cdkh '' ( D_2 , f ( \gamma ) ) \) are identical for all diagrams \( D_1 \), \( D_2 \) of \( L_1 \) and \( L_2 \). Thus \( \dkh '' ( L_1 , \gamma ) \) and \( \dkh '' ( L_2 , f ( \gamma ) ) \) are identical also.
\end{proposition}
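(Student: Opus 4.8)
The plan is to reduce to the four types of critical point permitted in a pseudostrict cobordism (\Cref{2Def:strict}) and, in each case, to realise \( L_1 \) and \( L_2 \) by \emph{literally the same} diagram drawn in the two surfaces. Fix a Morse function on \( M \) with the single critical point \( p \); it presents \( \Sigma'_g \) as the result of a surgery on \( \Sigma_g \) supported in a region \( R \subset \Sigma_g \) (replaced by a region \( R' \subset \Sigma'_g \)): a \(2\)-sphere component that is capped off or born (index \(0\) or \(3\)), an annular neighbourhood \( N(c) \) of a separating simple closed curve \( c \) exchanged for two disks (index \(1\)), or a pair of disks in distinct components exchanged for a connecting tube (index \(2\)). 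In all cases \( \Sigma_g \setminus R = \Sigma'_g \setminus R' \) as surfaces. First I would use the hypothesis that \( S \) is a product cobordism to isotope \( L_1 \) off the handle, so that \( L_1 \subset \Sigma_g \setminus R \); then a diagram \( D_1 \) of \( L_1 \) may be taken in \( \Sigma_g \setminus R \), and since every smoothing of \( D_1 \) lies in a regular neighbourhood of \( D_1 \), the whole dotted cube of \Cref{3Def:dottedcube} lives in \( \Sigma_g \setminus R = \Sigma'_g \setminus R' \). Viewing this cube inside \( \Sigma'_g \) exhibits \( D_1 \) as a diagram \( D_2 \) of \( L_2 \) whose cube of smoothings is combinatorially identical: same circles, same edges, hence the same maps \( m^0, \Delta^0, \eta^0 \) and the same higher terms from \Cref{3Fig:diffcomp}. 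Only the dottings remain to be compared.

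To build \( f \) I would observe that, in each of the four cases, the inclusions \( \Sigma_g \setminus R \hookrightarrow \Sigma_g \) and \( \Sigma'_g \setminus R' \hookrightarrow \Sigma'_g \) induce isomorphisms on \( H_1(-;\Z_2) \). This is clear for a sphere birth/death; for the tube case one removes one disk from each of two \emph{distinct} closed components (and, dually, the core of the tube is separating in \( \Sigma'_g \)); for the index-\(1\) case a short computation with the long exact sequence of the pair shows that deleting a neighbourhood of a \emph{separating} curve from a closed surface is an \( H_1(-;\Z_2) \)-isomorphism — it is precisely the ``separating attaching sphere'' and ``distinct components'' clauses of \Cref{2Def:strict}, and the resulting genus-preservation, that make these statements true. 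Dualising, set
\begin{equation*}
	f\colon H^1(\Sigma_g;\Z_2)\xrightarrow{\ \cong\ }H^1(\Sigma_g\setminus R;\Z_2)=H^1(\Sigma'_g\setminus R';\Z_2)\xleftarrow{\ \cong\ }H^1(\Sigma'_g;\Z_2).
\end{equation*}
Then for any circle \( \alpha \) occurring in a smoothing of \( D_1 \) — which lies in \( \Sigma_g \setminus R = \Sigma'_g \setminus R' \) — the classes \( [\alpha]_{\Sigma_g} \) and \( [\alpha]_{\Sigma'_g} \) correspond under these identifications, so \( \langle f(\gamma),[\alpha]\rangle = \langle\gamma,[\alpha]\rangle \) and \( \alpha \) is dotted in \( \llbracket D_2, f(\gamma) \rrbracket \) exactly when it is in \( \llbracket D_1, \gamma \rrbracket \). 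Therefore the trigraded chain complexes \( \cdkh''(D_1,\gamma) \) and \( \cdkh''(D_2, f(\gamma)) \) coincide, and invariance of the totally reduced homology (\Cref{3Thm:trhinv}) then gives \( \dkh''(L_1,\gamma) = \dkh''(L_2, f(\gamma)) \) regardless of which diagrams are used to compute the two sides.

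The main obstacle is the geometric reduction in the first paragraph: that the single handle of \( M \) can be arranged disjoint from \( S \), equivalently that \( L_1 \) can be isotoped off \( R \). I would argue this from the definition of product cobordism — \( S \) is isotopic in \( M \times I \) to the vertical cylinder \( L_1 \times I \) — together with the fact that the critical point \( p \) is supported in an arbitrarily small handle neighbourhood whose attaching and belt spheres meet a generic level surface in a \(1\)-complex that a \(1\)-manifold can be pushed off; propagating \( L_1 \) downward along the Morse flow while avoiding the stable and unstable manifolds of \( p \) then keeps it disjoint from \( R \) at every level. Granting this, the remaining work is routine bookkeeping: enumerating the four cases, identifying \( R \) and \( R' \), and checking the \( \Z_2 \)-homology isomorphisms above; the fact that the differentials of \Cref{3Fig:diffcomp} depend only on the local picture of the smoothings and their dots then makes the identification of complexes automatic.
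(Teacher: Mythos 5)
The algebraic core of your argument is the paper's own: realise the two links by the same diagram in the common subsurface, note that the cube, its edges and the maps of \Cref{3Fig:diffcomp} are unchanged, and define \( f \) by the splitting of \( H^1(\Sigma_g;\Z_2) \) induced by the surgery locus (your zig-zag through the complement is the same isomorphism as the paper's decomposition along the separating curve, and your check that the dottings with respect to \( \gamma \) and \( f(\gamma) \) agree is exactly what is needed). The paper treats index \(0\)/\(3\) as immediate and index \(2\) by reversing the cobordism, while you treat the four cases uniformly; that difference is cosmetic.

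The genuine problem is your final paragraph, where you try to \emph{prove} by general position that \( L_1 \) can be isotoped off the surgery region \( R \). This step, as argued, fails: the relevant locus in the thickened level surface \( \Sigma_g\times I \) is the vertical annulus \( \sigma\times I \) (codimension \(1\)), a \(1\)-manifold meets it generically in points, and such intersections cannot be removed by perturbation when they are essential --- even for \( \sigma \) separating, a link can intersect \( \sigma\times I \) essentially. (Equivalently, in \( M\times I \) the stable/unstable sets of \( p \) crossed with \( I \) are \(3\)-dimensional and \( S \) is \(2\)-dimensional, so transversality gives \(1\)-dimensional intersections, not disjointness.) Indeed, if a push-off were always available, \Cref{3Lem:missing}, \Cref{4Prop:attaching} and \Cref{4Thm:ascent} would be vacuous: the whole point of the paper is that totally nontrivial links \emph{cannot} be made disjoint from such annuli. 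The correct justification is not an isotopy at all: disjointness from \( N(\sigma)\times I \) at the critical level is forced by (indeed is essentially the content of) the hypothesis that \( S \) is a product cobordism properly embedded across the single critical point of \( M \) --- a point of the link lying in the surgered-out region has no continuation into the next level surface, so the vertical surface traced by the link would fail to be a properly embedded product. Once you replace your general-position paragraph by this observation (which the paper leaves implicit), the rest of your argument goes through and coincides with the published proof.
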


\begin{proof}
If the critical point of \( M \) is of index \( 0 \) or \( 3 \) the result is clear from the construction of the totally reduced homology: the number of circles in a smoothing, the edges of the cube of resolutions, and the dotting are unchanged.

Suppose that the critical point of of index \( 1 \) (the index \( 2 \) case is obtained by reversing the cobordism and applying the following proof). As \( ( S , M ) \) is pseudostrict, the attaching sphere of the handle corresponding to the critical point must be a separating curve; denote it by \( \sigma \). Distinguishing this curve induces a direct sum decomposition of \( H^1 \left( \Sigma_{g} ; \Z_2 \right) \) as follows. Let \( \Sigma_g = F_1 \cup_{\sigma} F_2 \), where \( F_1 \), \( F_2 \) are compact orientable surfaces with boundary \( S^1 \). Denote by \( \Sigma_{g_i} \) the result of collapsing \( F_i \) to a point. Notice that \( \Sigma^{\prime}_{g} = \Sigma_{g_1} \sqcup \Sigma_{g_2} \). We have
\begin{equation*}
	\begin{aligned}
		H^1 \left( \Sigma_{g} ; \Z_2 \right) &\cong H^1 \left( \Sigma_{g_1} ; \Z_2 \right) \oplus H^1 \left( \Sigma_{g_2} ; \Z_2 \right) \\
		&\cong H^1 \left( \Sigma^{\prime}_{g} ; \Z_2 \right)
	\end{aligned}
\end{equation*}
Denote the isomorphism described by \( f \). It is clear that the number of circles in a smoothing and the edges of the cube of resolutions are unchanged, and that the dotting with respect to \( \gamma \) and \( f ( \gamma ) \) are equivalent.
\end{proof}

\begin{definition}\label{3Def:psmap}
Let \( L_1 \hookrightarrow \Sigma_{g} \times I \) and \( L_2 \hookrightarrow \Sigma^{\prime}_{g} \times I \) be links and \( ( S , M ) \) a pseudostrict cobordism between them. There is a map \( \phi_S : \dkh '' \left( L_1 , \gamma \right) \rightarrow \dkh '' \left( L_2 , \gamma \right) \) induced by \( \left( S, \Sigma_g \times I \right) \), for all \( \gamma \in H^1 \left( \Sigma_g ; \Z_2 \right) \) (we have suppressed the notation \( f ( \gamma ) \) of \Cref{3Eq:homiso}). This map is defined by splitting \( ( S , M ) \) into strict pieces, and concatenating the maps assigned to these pieces using the identification of their domains and codomains given by \Cref{3Prop:pseudoiso}.
\end{definition}
The map assigned to a pseudostrict cobordism enjoys the factoring property, described in \Cref{3Prop:factoring}, by construction. In addition, if \( ( S , M ) \) is a pseudostrict cobordism then \( \phi_S \) is filtered of \(j\)-degree \( \chi ( S ) \), and \( c \)-degree \( \frac{1}{2} \chi ( S ) \).

The following is a corollary to \Cref{3Thm:isos}.
\begin{corollary}\label{3Cor:iso}
If \( ( S , M ) \) is a pseudostrict concordance then \( \phi_S \) is an isomorphism.
\end{corollary}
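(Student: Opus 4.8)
The plan is to reduce everything to the strict concordance case already covered by \Cref{3Thm:isos}. By \Cref{3Def:psmap}, $\phi_S$ is built by fixing a Morse function $f$ on $M$ and decomposing the pseudostrict concordance $(S,M)$ along regular level surfaces of $f$ into pieces, each of which is either a strict cobordism over a product region $\Sigma_g \times I$ of $M$ or a pseudostrict cobordism containing a single critical point of $M$, the latter arranged so that $S$ restricts to a product there. The single-critical-point pieces contribute only the identifications of chain complexes from \Cref{3Prop:pseudoiso}, which are isomorphisms; by the factoring property of \Cref{3Prop:factoring} (and its pseudostrict version) $\phi_S$ is the composite of the maps attached to all these pieces. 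So it suffices to show that the map $\phi_{S_i}$ assigned to each strict piece $(S_i, \Sigma_g \times I)$ is an isomorphism.

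The key step is then to verify that each such strict piece is itself a (strict) \emph{concordance}, so that the clause ``if $M = \Sigma_g \times I$ then $\phi_S$ is an isomorphism also'' of \Cref{3Thm:isos} applies to it. Since $(S,M)$ is a concordance, $S$ is a disjoint union of annuli, each with one boundary circle in $L_1$ and one in $L_2$. The level surfaces along which we cut sit in product regions of $M$ adjacent to the critical points, where $S$ has been made a product; because $\partial S = L_1 \sqcup L_2$ lies in $\partial M \times I$ and hence is disjoint from the interior level surfaces $f^{-1}(t) \times I$, each such cutting surface meets $S$ transversely in a disjoint union of circles (no arcs), and the local product structure forces each of these circles to be core-parallel in the annular component of $S$ that contains it. Cutting an annulus along a disjoint union of core-parallel circles yields a disjoint union of annuli, each of which still meets both boundary copies of $\Sigma_g$; hence $S_i$ is a disjoint union of annuli of the required type and $(S_i, \Sigma_g \times I)$ is a strict concordance. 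Applying \Cref{3Thm:isos} to each piece and composing, $\phi_S$ is an isomorphism.

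The part I expect to require the most care is the arrangement underlying \Cref{3Def:psmap}: one must justify that, after isotoping $S$ in $M \times I$ (keeping it transverse to the $I$-factor, as in \Cref{2Def:exceeding}) and choosing $f$ suitably, every critical point of $M$ lies over a region on which $S$ is a product cobordism, so that the decomposition into strict and single-critical-point pieces is legitimate and the core-parallelism used above genuinely holds; this is where the freedom to slide $S$ across product regions of $M$ is used. If that arrangement is awkward to pin down, there is a fallback at the level of underlying groups: by \Cref{3Prop:leeiso}, forgetting the $c$-grading identifies $\dkh''(L_1,\gamma)$ and $\dkh''(L_2,\gamma)$ (via \Cref{3Eq:homiso}) with $\dkh'(L_1)$ and $\dkh'(L_2)$, under which $\phi_S$ becomes $\widetilde{\phi}_S$, an isomorphism by \Cref{3Thm:isos}; and since $\chi(S)=0$ the map $\phi_S$ is filtered of $c$-degree $0$, so it is an isomorphism of the totally reduced homology groups.
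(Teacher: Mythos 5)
Your primary route has a genuine gap at the step where you cut \( S \) along the chosen level surfaces. The assertion that ``the local product structure forces each of these circles to be core-parallel in the annular component of \( S \) that contains it'' does not follow: a product structure for \( S \) in a thin collar of a regular level \( f^{-1}(t) \times I \) controls \( S \) only inside that collar, and says nothing about how the intersection circles sit in the annulus globally. An annulus of the concordance may dip across a cutting level and return (a finger), producing an intersection circle that is inessential in \( S \); cutting then yields a disc component on one side and a planar piece with three boundary circles on the other. So the strict pieces \( (S_i, \Sigma_g \times I) \) are in general only strict cobordisms, not strict concordances, and the clause of \Cref{3Thm:isos} reserved for strict concordances cannot be applied piece by piece; removing such inessential intersections would itself need an isotopy argument you do not supply, and it is not clear one always exists for a fixed decomposition.

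Your fallback, however, is correct, and it is essentially the paper's argument: the corollary is recorded there with no separate proof beyond being a consequence of \Cref{3Thm:isos}, the point being that \( (S,M) \) is a concordance, so \( \widetilde{\phi}_S \) is an isomorphism by the first clause of that theorem; since the maps \( m'' \), \( \Delta'' \), \( \eta'' \) are independent of the dotting, under the identification of \Cref{3Prop:leeiso} the map \( \phi_S \) of \Cref{3Def:psmap} has the same underlying map as \( \widetilde{\phi}_S \), hence is bijective and so an isomorphism (the filtration statement being recorded separately after \Cref{3Def:psmap}). This transfer from \( \widetilde{\phi} \) to \( \phi \) via \Cref{3Prop:leeiso} is exactly what the paper does in the proof of \Cref{4Prop:gen0tnt}. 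So your proposal does establish the corollary, but only through the backup argument; the cut-into-strict-concordances route should be dropped or repaired, since as written its key claim is false in general.
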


\subsection{Properties}\label{3Subsec:properties}
We conclude this section by determining properties of the totally reduced homology we require in \Cref{4Sec:ascent}.

Vertical annuli within \( \Sigma_g \times I \) represent available handle destabilizations, or equivalently index \( 1 \) Morse critical points within cobordisms. Given \( L \hookrightarrow \Sigma_g \times I \), suppose that there exists a vertical annulus \( \sigma \times I \) in \( \Sigma_g \times I \), such that \( L \cap ( \sigma \times I ) = \emptyset \). If \( \sigma \) represents the Poincar\'e dual to \( \gamma \), then the \(c\)-grading of \( \dkh '' ( L , \gamma ) \) is determined by the \( j \)-grading.
\begin{lemma}\label{3Lem:missing}
Let \( L \hookrightarrow \Sigma_g \times I \) be a link and \( \gamma \in H^1 \left( \Sigma_g ; \Z_2 \right) \). Suppose there exists a diagram, \( D \), of \( L \) and a simple closed curve \( \sigma \) on \( \Sigma_g \), representing (the Poincar\'e dual to) \( \gamma \), with \( D \cap \sigma = \emptyset \). Then
\begin{equation*}
	c ( x^{\text{u/l}} ) = \dfrac{1}{2} j ( x^{\text{u/l}} )
\end{equation*}
for all \( x^{\text{u/l}} \in \dkh '' \left( L , \gamma \right) \).
\end{lemma}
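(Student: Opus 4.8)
The plan is to reduce the statement to the observation that, under the hypothesis, the dotting of \( \llbracket D, \gamma \rrbracket \) with respect to \( \gamma \) is \emph{trivial}: no circle appearing in any smoothing of \( D \) receives a dot. Granting this, every state \( a^{\text{u/l}} \) of \( \cdkh '' ( D, \gamma ) \) is a tensor product of the undotted generators \( v^{\text{u/l}}_{+}, v^{\text{u/l}}_{-} \) only, so \( \#( v^{\text{u/l}}_{\dm} \text{'s in } a^{\text{u/l}} ) = \#( v^{\text{u/l}}_{\dpl} \text{'s in } a^{\text{u/l}} ) = 0 \), and \Cref{3Eq:dottedgrading} immediately gives \( c( a^{\text{u/l}} ) = \tfrac{1}{2} j( a^{\text{u/l}} ) \) for every state. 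This identity on all generators of the filtered complex \( \cdkh '' ( D, \gamma ) \) then passes to the induced filtrations on the homology \( \dkh '' ( L, \gamma ) \), so in particular \( c( x^{\text{u/l}} ) = \tfrac{1}{2} j( x^{\text{u/l}} ) \) for the distinguished generators. (If one prefers, the same conclusion follows from the fact that the \( x^{\text{u/l}} \) are represented by cycles supported on smoothings of \( D \), which are undotted by the claim verified below.)

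To verify the claim I would first recall, from \Cref{3Def:dottedcube}, that a circle \( C \) in a smoothing of \( D \) is dotted precisely when \( \gamma([C]) \neq 0 \) in \( \Z_2 \); since \( \sigma \) represents the Poincar\'e dual of \( \gamma \), this evaluation equals the mod-\(2\) intersection number \( [C] \cdot [\sigma] \). Now \( D \cap \sigma = \emptyset \), so after a small isotopy I may take \( \sigma \) to be disjoint from the finitely many discs in which the crossings of \( D \) are resolved. Every smoothing \( \mathscr{S} \) of \( D \) agrees with \( D \) outside these discs and consists of arcs inside them, hence \( \mathscr{S} \cap \sigma = \emptyset \); in particular every circle of every smoothing is disjoint from \( \sigma \) and so has \( [C] \cdot [\sigma] = 0 \), i.e.\ is undotted.

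There is no genuine obstacle here — the lemma is bookkeeping once the right picture is in hand. The only points deserving care are the identification of the dotting condition \( \gamma([C]) \neq 0 \) with the intersection number \( [C] \cdot [\sigma] \) (so that disjointness from \emph{a curve representing \( \gamma \)}, rather than from \( \gamma \) itself, does the job), the harmless isotopy of \( \sigma \) off the crossing discs, and the routine fact that an equality of gradings holding on every generator of a filtered complex descends to the induced filtration on its homology.
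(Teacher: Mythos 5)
Your proposal is correct and follows essentially the same route as the paper: the paper's proof simply observes that \( D \cap \sigma = \emptyset \) forces every circle in every smoothing of \( \llbracket D, \gamma \rrbracket \) to be undotted, whence the claimed equality is immediate from \Cref{3Eq:dottedgrading}. Your additional remarks (isotoping \( \sigma \) off the crossing discs, identifying the dotting condition with the mod-\(2\) intersection number, and passing from chain-level generators to the induced filtration gradings on homology) merely make explicit steps the paper leaves implicit.
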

\begin{proof}
If \( D \cap \sigma = \emptyset \) then no circles within \( \llbracket D , \gamma \rrbracket \) acquire dots with respect to \( \gamma \). The result is then clear from \Cref{3Eq:dottedgrading}.
\end{proof}  

The generators of classical Lee homology come in pairs, and the grading of one is prescribed by the grading of the other. In the case of the totally reduced homology the generators come in quadruples, and the grading of any one of them prescribes the gradings of the others.
\begin{lemma}\label{3Lem:gradingshift}
We have
\begin{equation*}
	\begin{aligned}
		j ( x^{\text{u}} ) &= j ( x^{\ell} ) + 1 \\
		c ( x^{\text{u}} ) &= c ( x^{\ell} ) + \dfrac{1}{2} 
	\end{aligned}
\end{equation*}
for all \( x^{\text{u/l}} \in \dkh '' \left( L , \gamma \right) \).
\end{lemma}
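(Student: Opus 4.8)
The plan is to derive both identities from the explicit description of the distinguished generators together with the grading conventions of \Cref{3Def:gradings}; essentially it is a bookkeeping computation, and the only substantive input comes from outside this excerpt. That input is the precise shape of the quadruples \( x^{\text{u}}, \overline{x}^{\text{u}}, x^{\ell}, \overline{x}^{\ell} \) as constructed in \cite[Section 3]{Rushworth2017} and carried over to \( \cdkh''(D,\gamma) \) through \Cref{3Prop:leeiso}: within each quadruple the ``lower'' generator \( x^{\ell} \) is the tautological superscript swap of the ``upper'' generator \( x^{\text{u}} \). Concretely, expressing \( x^{\text{u}} \) as a \( \Q \)-linear combination of states supported on a single smoothing \( \mathscr{S} \), the element \( x^{\ell} \) is the same combination with every tensor factor \( v^{\text{u}}_{\overset{\left( \bullet \right)}{\pm}} \) replaced by the corresponding \( v^{\ell}_{\overset{\left( \bullet \right)}{\pm}} \); so \( x^{\text{u}} \) and \( x^{\ell} \) sit on the same smoothing, with the same circle count, the same \( \pm \)-decorations and the same dotting with respect to \( \gamma \) term by term. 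The same relation holds between \( \overline{x}^{\text{u}} \) and \( \overline{x}^{\ell} \).

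First I would isolate the effect of the superscript swap on a single state. Let \( a^{\text{u}} \) be a state on a smoothing \( \mathscr{S} \) and \( a^{\ell} \) its superscript swap. Then \( \mathscr{S} \), the counts of \( v_{+} \)'s and \( v_{-} \)'s, the counts of \( v_{\dpl} \)'s and \( v_{\dm} \)'s, and \( wr(D) \) are unchanged, and the homological grading \( i \) is insensitive to the \( \text{u}/\ell \) superscript. Hence \Cref{3Eq:quantumgrading} gives \( j(a^{\text{u}}) - j(a^{\ell}) = 1 \), the whole discrepancy being the constant \( -1 \) that appears only in the \( \ell \)-line of that formula, and then \Cref{3Eq:dottedgrading} gives \( c(a^{\text{u}}) - c(a^{\ell}) = \tfrac12\bigl(j(a^{\text{u}}) - j(a^{\ell})\bigr) = \tfrac12 \). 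The point worth emphasising is that these shifts are uniform: the same value \( 1 \) (resp. \( \tfrac12 \)) for every state, independently of its labelling.

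Then I would conclude. Since \( j \) and \( c \) on \( \cdkh''(D,\gamma) \) are the induced filtration gradings, the \( j \)-degree (resp. \( c \)-degree) of an element is read off from the \( j \)-degrees (resp. \( c \)-degrees) of the states occurring in it. Passing from \( x^{\text{u}} \) to \( x^{\ell} \) replaces every such state by its superscript swap, lowering each state's \( j \) by exactly \( 1 \) and its \( c \) by exactly \( \tfrac12 \); therefore \( j(x^{\ell}) = j(x^{\text{u}}) - 1 \) and \( c(x^{\ell}) = c(x^{\text{u}}) - \tfrac12 \), which is the claim. The identical argument applies to the pair \( \overline{x}^{\text{u}}, \overline{x}^{\ell} \), so the conclusion holds for every distinguished generator.

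The only genuine obstacle is the imported fact of the first paragraph: one must know that \( x^{\ell} \) really is the tautological superscript swap of \( x^{\text{u}} \), with the smoothing and all \( \pm \)- and dot-decorations held fixed, rather than a more complicated cycle. This is precisely what the construction of the distinguished generators of doubled Lee homology in \cite{Rushworth2017} provides; granting it, the rest of the argument is the elementary computation above, and (as in the classical Rasmussen setting) the inhomogeneity of the generators in \( j \) and \( c \) is harmless because one works throughout with the filtration gradings.
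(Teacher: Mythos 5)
Your state-by-state computation is fine as far as it goes: the only discrepancy between the two lines of \Cref{3Eq:quantumgrading} is the constant \( -1 \), so a state and its superscript swap differ by \( 1 \) in \( j \) and by \( \tfrac{1}{2} \) in \( c \) via \Cref{3Eq:dottedgrading}, and the description of \( x^{\ell} \) as (up to signs on its terms) the superscript swap of \( x^{\text{u}} \) is the correct input from \cite{Rushworth2017}. The gap is in your final step. The gradings \( j \) and \( c \) on \( \dkh'' ( L , \gamma ) \) are the \emph{induced filtration} gradings, so the grading of a homology class is an extremum over \emph{all} cycle representatives; it cannot simply be ``read off from the states occurring in'' one chosen representative. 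Your computation therefore only yields one-sided inequalities relating \( j ( x^{\text{u}} ) \) and \( j ( x^{\ell} ) \), not the claimed equalities, and you cannot repair this by transporting an optimal representative of \( x^{\text{u}} \) to one of \( x^{\ell} \), because the superscript swap is not a chain map: \( \eta'' \) sends \( r^{\text{u}} \mapsto r^{\ell} \) but \( r^{\ell} \mapsto 2 r^{\text{u}} \) and \( g^{\ell} \mapsto -2 g^{\text{u}} \), so it fails to commute with the swap. This is precisely the subtlety that makes Rasmussen-type invariants nontrivial: in classical Lee theory the chain-level filtration degree of the canonical cycle depends on the diagram, while the induced grading of its class does not, so chain-level gradings of canonical representatives do not determine homology-level filtration gradings.

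The paper closes exactly this gap by citing the Gaussian elimination of \cite[Proof of Theorem 3.5]{Rushworth2017}: \( \cdkh'' ( D , \gamma ) \) is filtered chain homotopy equivalent to a complex with \emph{vanishing} differential whose chain spaces are spanned by the \( x^{\text{u/l}}, \overline{x}^{\text{u/l}} \), and which splits as an unshifted summand together with its \( \lbrace -1 \rbrace \)-shifted copy. In that reduced complex homology coincides with chains, so the grading shift is immediate from \Cref{3Eq:quantumgrading,3Eq:dottedgrading}, and since a filtered homotopy equivalence (with filtered inverse and homotopies) preserves the induced filtration gradings on homology, the shift descends to \( \dkh'' ( L , \gamma ) \). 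Your argument becomes correct if you perform your bookkeeping in that reduced complex rather than on chosen cycle representatives in \( \cdkh'' ( D , \gamma ) \); as written, the step equating representative-level and homology-level gradings is unjustified.
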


\begin{proof}
In \cite[Proof of Theorem \(3.5\)]{Rushworth2017} Gaussian elimination is used to produce a complex that is chain homotopy equivalent to \( \cdkh '' \left( D , \gamma \right) \), with chain spaces spanned by the set of all \( x^{\text{u/l}}, \overline{x}^{\text{u/l}} \), and with vanishing differential. Therefore the new complex splits as a direct sum of upper and lower terms, and one may obtain the gradings of the shifted part of its homology from the unshifted part via \Cref{3Eq:quantumgrading,3Eq:dottedgrading}. It follows that \( \dkh '' \left( D , \gamma \right) \) splits likewise.
\end{proof}

\begin{lemma}\label{3Lem:gradingbar}
We have
\begin{equation*}
	\begin{aligned}
		j ( x^{\text{u/l}} ) &= j ( \overline{x}^{\text{u/l}} ) \pm 2 \\
		c ( x^{\text{u/l}} ) &= c ( \overline{x}^{\text{u/l}} ) \pm 1 
	\end{aligned}
\end{equation*}
for all \( x^{\text{u/l}} \in \dkh '' \left( L , \gamma \right) \). The \( \pm \) signs are independent.
\end{lemma}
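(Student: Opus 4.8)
The plan is a short direct computation from the explicit shape of the canonical generators together with the grading formulas \Cref{3Eq:quantumgrading} and \Cref{3Eq:dottedgrading}. It suffices to treat one of the two superscripts, say $\text{u}$, since \Cref{3Lem:gradingshift} then transports the statement to the $\ell$-case. Fix an oriented diagram \( D \) of \( L \) and let \( \mathscr{S} \) be its oriented resolution; reversing all components of \( L \) leaves the oriented resolution unchanged, so both \( \sg \) and \( \overline{\sg} \) are supported on \( \mathscr{S} \). Label the circles of \( \mathscr{S} \) as \( C_1, \dots, C_n \), of which \( d \) are dotted with respect to \( \gamma \) and \( m = n - d \) are not. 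Each tensor factor of \( \sg \) then has the form \( v_{+} + \epsilon_i\, v_{-} \) for a sign \( \epsilon_i \in \{ \pm 1 \} \) recording the colouring of \( C_i \) (with the \( v_{\pm} \) carrying a dot exactly when \( C_i \) does), and \( \overline{\sg} \) is obtained from \( \sg \) by replacing each \( \epsilon_i \) by \( -\epsilon_i \). Expanding the tensor products, \( x^{\text{u}} = \sg + \overline{\sg} \) is supported exactly on the monomials of \( \mathscr{S} \) carrying an \emph{even} number of \( v_{-} \)/\( v_{\dm} \) factors, and \( \overline{x}^{\text{u}} = \sg - \overline{\sg} \) exactly on those carrying an \emph{odd} number, each with coefficient \( \pm 2 \).

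Next I would record the two gradings on monomials. For a monomial of \( \mathscr{S} \) with \( w \) factors of \( v_{-} \)/\( v_{\dm} \), of which \( p \) lie on dotted circles and \( q = w - p \) on undotted circles, \Cref{3Eq:quantumgrading} gives \( j = n - 2w + \kappa \) and \Cref{3Eq:dottedgrading} gives \( c = p - q + \kappa' \), with \( \kappa, \kappa' \) depending only on \( \mathscr{S} \) and \( D \). Since the totally reduced complex is filtered in both \( j \) and \( c \), the \( j \)-grading of a sum of monomials equals \( n - 2 w_{\max} + \kappa \), where \( w_{\max} \) is the largest weight occurring with nonzero coefficient, and its \( c \)-grading equals \( (p - q)_{\min} + \kappa' \). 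The weight is maximised uniquely by the all-\( v_{-} \)/\( v_{\dm} \) monomial (weight \( n \)), and \( p - q \) is minimised uniquely by the monomial carrying \( v_{\dpl} \) on every dotted circle and \( v_{-} \) on every undotted circle (weight \( m \), with \( p = 0 \), \( q = m \)). The first monomial lies in \( x^{\text{u}} \) precisely when \( n \) is even, and in \( \overline{x}^{\text{u}} \) precisely when \( n \) is odd; the second lies in \( x^{\text{u}} \) precisely when \( m \) is even, and in \( \overline{x}^{\text{u}} \) precisely when \( m \) is odd.

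Finally I would assemble these. Whichever of \( x^{\text{u}}, \overline{x}^{\text{u}} \) contains the all-\( v_{-} \) monomial realises the minimal \( j \)-value \( -n + \kappa \) on \( \mathscr{S} \); the other lacks that monomial but contains all \( n \) distinct weight-\( (n-1) \) monomials (these have the opposite weight-parity, hence are present), which cannot cancel one another, so its \( j \)-grading is exactly \( 2 \) larger. Running the identical argument with \( m \) in place of \( n \), using the weight-\( m \) monomial above and the weight-\( (m \pm 1) \) monomials that realise the next value \( p - q = -m + 1 \), shows that the \( c \)-gradings of \( x^{\text{u}} \) and \( \overline{x}^{\text{u}} \) differ by exactly \( 1 \). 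The sign in the \( j \)-statement is governed by the parity of \( n \) and that in the \( c \)-statement by the parity of \( m = n - d \); since these differ by \( d \) they are logically independent, which is precisely the independence of the two \( \pm \) signs. The only point that needs justification rather than bookkeeping — and the one I would regard as the real content — is that these monomial-level numbers genuinely compute the \( j \)- and \( c \)-filtration gradings of the \emph{homology classes} \( x^{\text{u}}, \overline{x}^{\text{u}} \), i.e.\ that the canonical generators are filtration-optimal cycle representatives; this proceeds exactly as in the proof of \Cref{3Lem:gradingshift} and \cite{Rushworth2017}.
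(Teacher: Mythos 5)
There is a genuine gap, and it sits exactly at the step you set aside as routine. Everything before your final sentence is a chain-level computation: it determines the minimal \( j \)- and \( c \)-degrees of the monomials occurring in the particular cycles \( \sg \pm \overline{\sg} \). The gradings in the statement, however, are the filtration gradings induced on homology, where the grading of a class is extracted over all of its representatives; the chain-level numbers are only one-sided bounds. The deferred claim that the canonical cycles are filtration-optimal representatives is not bookkeeping — it is the entire content of the lemma — and it is false in general. Already in the classical Lee theory underlying the \( j \)-grading (via \Cref{3Prop:leeiso}): for the diagram of the unknot with a single negative kink, the oriented resolution has two circles, the cycle \( \sg + \overline{\sg} \) contains the monomial \( v_- \otimes v_- \) and so has chain-level quantum filtration \( -3 \), whereas every nonzero class in the Lee homology of the unknot has filtration grading at least \( -1 \). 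So the canonical representative need not be optimal, and the chain-level minima do not compute the class gradings. For the same reason your sign rules cannot be correct at the level of classes: the parity of \( n \) (and of \( m \)) changes under a Reidemeister I move performed in a disc, while \( j ( x^{\text{u}} ) - j ( \overline{x}^{\text{u}} ) \) and \( c ( x^{\text{u}} ) - c ( \overline{x}^{\text{u}} ) \) are invariants, so a rule reading the sign off these parities is already inconsistent across diagrams of the same link.

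The appeal to the proof of \Cref{3Lem:gradingshift} does not close the gap: that argument exploits the splitting of the complex into an upper summand and a lower summand equal to the upper one shifted by \( \lbrace -1 \rbrace \), a symmetry interchanging \( x^{\text{u}} \) and \( x^{\ell} \); there is no analogous symmetry interchanging \( x \) and \( \overline{x} \), which is precisely why this lemma is the delicate one. The paper itself does not argue as you do: the \( j \)-statement is imported from doubled Lee homology via \Cref{3Prop:leeiso} together with \cite[Lemma 4.2]{Rushworth2017} (the doubled analogue of \cite[Lemma 3.5]{Rasmussen2010}), and the \( c \)-statement is proved by running that argument with the \( c \)-filtration in place of \( j \). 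Any self-contained proof must supply what those arguments supply and a monomial count cannot: a two-sided estimate, namely that the class gradings of \( x \) and \( \overline{x} \) differ by at most \( 2 \) (respectively \( 1 \)) and that they are not equal. (A secondary point: you only treat the oriented resolution of the chosen orientation, whereas the lemma concerns every quadruple of canonical generators, one for each alternately colourable smoothing; that is easily repaired, the optimality step is not.)
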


The proof of the \( j \)-grading statement follows from \Cref{3Prop:leeiso} and \cite[Lemma \(4.2\)]{Rushworth2017}, while proof of the \(c\)-grading statement is essentially identical to that of \cite[Lemma \(4.2\)]{Rushworth2017} (see also \cite[Lemma \( 3.5 \)]{Rasmussen2010}).

The final property we require concerns the interaction between the maps assigned to cobordisms and the upper and lower superscripts.
\begin{lemma}\label{3Lem:upperlower}
Let \( \widetilde{\phi}_S : \dkh ' ( L_1 ) \rightarrow \dkh ' ( L_2 ) \) be a map induced by a cobordism. Then
	\begin{equation*}
		\widetilde{\phi}_S ( x^{\text{u}} ) = y^{\text{u/l}} \Leftrightarrow \widetilde{\phi} ( x^{\ell} ) = y^{\text{l/u}}
	\end{equation*}
up to a nonzero scalar. A map induced by a strict cobordism \( \phi_S : \dkh '' ( L_1, \gamma ) \rightarrow \dkh '' ( L_2, \gamma ) \) behaves similarly.
\end{lemma}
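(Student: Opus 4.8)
The plan is to decompose $S$ into elementary cobordisms and to track through each piece the single global \emph{upper/lower label} carried by the canonical generators, showing that every piece either preserves this label or globally reverses it.

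First I would recall the relevant feature of \Cref{3Def:dottedcomplex}: the space attached to a smoothing with $k$ circles is $\mathcal{A}^{\otimes k} \oplus \left( \mathcal{A}^{\otimes k} \lbrace -1 \rbrace \right)$, an \emph{upper} summand direct-summed with a \emph{lower} summand, and every state — in particular each canonical generator $x^{\text{u}}, \overline{x}^{\text{u}}$ (respectively $x^{\ell}, \overline{x}^{\ell}$) — lies wholly in the upper (respectively lower) part. Inspecting \Cref{3Fig:diffcomp}, the maps $m^{0}, m^{+2}, m^{+4}, \Delta^{0}, \Delta^{+2}, \Delta^{+4}$ ignore the $\text{u}/\ell$ superscript and so respect this splitting, whereas $\eta^{0}, \eta^{+2}, \eta^{+4}$ map the upper summand into the lower summand and the lower into the upper; the unit and counit maps used for births and deaths of circles are the classical ones, applied compatibly with the splitting, and hence respect it too.

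Next I would invoke the factoring property of \Cref{3Prop:factoring} to write $\widetilde{\phi}_S$ as a composite of maps induced by elementary cobordisms: births and deaths of circles ($0$- and $2$-handles on $S$), merge/split saddles and single-cycle (punctured-M\"obius-band) saddles ($1$-handles on $S$), and, when $M \neq \Sigma_{g_1} \times I$, the pseudostrict handle additions of $M$ (index $0$ and $3$, separating index $1$, disjoint-component index $2$) together with the (de)stabilizing handles of $M$. By the previous paragraph, births, deaths and merge/split saddles send all-upper elements to all-upper elements and all-lower to all-lower — call such a piece \emph{label-preserving} — and the pseudostrict handles of $M$ do likewise, acting as identifications on doubled Lee homology as in \Cref{3Prop:pseudoiso}; a single-cycle saddle carries $\eta$ and so sends all-upper elements to all-lower and conversely — call it \emph{label-reversing} (the (de)stabilizing handles are treated in the last paragraph). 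A composite of elementary pieces is label-preserving if it contains an even number of label-reversing pieces and label-reversing otherwise, so $\widetilde{\phi}_S$ is of one of the two types. If it is label-preserving then, since $x^{\text{u}}$ is all-upper and $x^{\ell}$ all-lower, the hypothesis $\widetilde{\phi}_S \left( x^{\text{u}} \right) = y^{\text{u/l}}$ forces the image to be upper and $\widetilde{\phi}_S \left( x^{\ell} \right)$ to be lower, hence equal (up to a nonzero scalar) to $y^{\ell}$; if it is label-reversing the two conclusions swap — which is the claimed equivalence. (When $\widetilde{\phi}_S$ of a canonical generator is not itself a single generator the statement is vacuous; the content is the displayed equivalence.) For a strict cobordism only births, deaths and saddles occur, and the identical argument applies to $\phi_S$ on $\cdkh '' \left( \underline{\phantom{X}} , \gamma \right)$, since $m''$ and $\Delta''$ remain label-preserving and $\eta'' = \eta^{0} + \eta^{+2} + \eta^{+4}$ label-reversing.

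The hard part will be the (de)stabilizing handles of $M$. Such a handle is attached along a curve disjoint from the link, so removing it leaves the diagram intact but converts some two-circle smoothings into one-circle smoothings and conversely, reorganising the $\eta$-edges of the cube; one must trace the construction of the induced isomorphism in \cite{Rushworth2017} and check that it preserves or globally interchanges the upper/lower decomposition in a uniform way, and — more delicately — that it sends a given canonical generator to the \emph{corresponding} member of a target quadruple, not merely to some generator of the right parity. Granting this, the (de)stabilizing pieces are label-preserving or label-reversing like the others and the argument above goes through; everything else is bookkeeping over which entries of \Cref{3Fig:diffcomp} involve the $\text{u}/\ell$ superscript.
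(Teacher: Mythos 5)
Your overall strategy---decompose \( \widetilde{\phi}_S \) into maps assigned to elementary pieces and track their interaction with the upper/lower structure---is the same as the paper's, but the way you close the argument has a genuine gap. Your bookkeeping only records in \emph{which summand} (upper or lower) the image of an element lies: ``label-preserving'' and ``label-reversing'' tell you that \( \widetilde{\phi}_S ( x^{\ell} ) \) lands in the lower (resp.\ upper) part whenever \( \widetilde{\phi}_S ( x^{\text{u}} ) \) lands in the upper (resp.\ lower) part. From this you conclude that \( \widetilde{\phi}_S ( x^{\ell} ) \) is ``hence equal (up to a nonzero scalar) to \( y^{\ell} \)'', but that step is a non sequitur: the lower summand of \( \dkh ' ( L_2 ) \) is spanned by many canonical generators (for an \( n \)-component link there are \( 2^{n} \) lower generators), so knowing the image is a lower-type element does not identify it with the partner \( y^{\ell} \) of the specific \( y^{\text{u}} \) you obtained from \( x^{\text{u}} \), nor even with a single generator rather than a combination. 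The content of the lemma is precisely this identification, and parity of \( \eta \)-edges cannot see it. The paper gets it by passing to the Bar-Natan--Morrison basis \( \lbrace r, g \rbrace \) (and its dotted analogues), in which the canonical generators are pure \( r/g \)-tensors with a superscript, and by the explicit formulas for \( m'' \), \( \Delta '' \), \( \eta'' \) in that basis: each elementary map sends an upper input and its lower partner to the \emph{same} \( r/g \)-expression up to a nonzero scalar and a uniform superscript toggle (e.g.\ \( \eta'' ( r^{\text{u}} ) = r^{\ell} \) while \( \eta'' ( r^{\ell} ) = 2 r^{\text{u}} \)). The biconditional then holds for each elementary map and is stable under composition, which is the whole proof.

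Separately, you explicitly defer the (de)stabilizing handles of \( M \) (``Granting this\,\dots''), so that case is conceded rather than proved. In the paper this does not require a new analysis: for the doubled Lee theory the moves on \( M \) do not contribute new algebraic maps beyond identifications, and every elementary cobordism map acts as one of \( m'' \), \( \Delta'' \), \( \eta'' \) (see \cite[Section 3.2]{Rushworth2017} and \cite[Section 3.4]{Rushworth2017b}), so the \( r,g \)-basis computation already covers it. If you replace your parity argument with the per-map statement ``\( f ( x^{\text{u}} ) = y^{\text{u/l}} \Leftrightarrow f ( x^{\ell} ) = y^{\text{l/u}} \) up to a nonzero scalar'' verified from \Cref{3Fig:diffcomp} in the \( \lbrace r, g \rbrace \) basis, both gaps close and your decomposition argument becomes the paper's proof.
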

\begin{proof}
There is a convenient basis of \( \cdkh '' ( L , \gamma ) \), first given in the case of Lee homology by Bar-Natan and Morrison \cite{Bar-Natan2006}. Let \( \lbrace r, g \rbrace \) be the basis of \( \mathcal{A} \) where
\begin{equation*}
	\begin{aligned}
		r = \dfrac{v_+ + v_-}{2} \\
		g = \dfrac{v_+ - v_-}{2}
	\end{aligned}
\end{equation*}
and similarly for \( \overset{\bullet}{r} \), \( \overset{\bullet}{g} \). We denote the corresponding generators of \( \mathcal{A} \oplus \left( \mathcal{A}\lbrace -1 \rbrace \right) \) as \( r^\text{u} \), \( r^\ell \), \( g^\text{u} \), and \( g^\ell \). The distinguished generators \( x^{\text{u/l}}, \overline{x}^{\text{u/l}} \) are expressed in this basis. The maps \( m'' \), \( \Delta '' \), and \( \eta '' \) have the following form, regardless of the dotting of the argument:
\begin{equation}\label{3Eq:rgbasis}
	\begin{aligned}
		m'' ( r \otimes r ) &= r \qquad &\Delta'' ( r ) &= 2r \otimes r \qquad &\eta'' ( r^\text{u}) &= r^\ell \\
		m'' ( g \otimes g ) &= g \qquad &\Delta'' ( g ) &= -2g \otimes g \qquad &\eta'' ( g^\text{u}) &= g^\ell \\
		m'' ( r \otimes g ) &= m'' ( g \otimes r ) = 0 & & \qquad &\eta'' ( r^\ell) &= 2r^\text{u} \\
		& & & &\eta'' ( g^\ell) &= -2g^\text{u}
	\end{aligned}
\end{equation}
As described in \cite[Section \(3.2\)]{Rushworth2017} and \cite[Section \(3.4\)]{Rushworth2017b} the map \( \widetilde{\phi} \) acts as a composition of the maps assigned to elementary cobordisms. These elementary cobordism maps act as either \( m '' \), \( \Delta '' \) or \( \eta '' \). Let \( f \in \lbrace m '' , \Delta '' , \eta '' \rbrace \): it is clear from \Cref{3Eq:rgbasis} that \( f ( x^{\text{u}} ) = y^{\text{u/l}} \Leftrightarrow f ( x^{\ell} ) = y^{\text{l/u}} \), up to a nonzero scalar. As \( \widetilde{\phi} \) is a composition of such maps, the result follows.
\end{proof}

\section{Detecting ascent concordance}\label{4Sec:ascent}
In \Cref{4Subsec:obstruction} we demonstrate that the totally reduced homology may be used to obstruct descent concordance, and provide examples of ascent concordant links in \Cref{4Subsec:examples}.

\subsection{Totally reduced homology obstructs descent concordance}\label{4Subsec:obstruction}
First, we define the \emph{totally nontrivial} property of links. As \Cref{4Prop:attaching} shows, a totally nontrivial link must intersect the attaching sphere of a destabilizing handle. Next, in \Cref{4Prop:conctnt,4Prop:gen0tnt} we verify that a totally nontrivial link cannot be made disjoint to an attaching sphere of a destabilizing handle at any stage of a pseudostrict concordance. Destabilizing handles correspond to index \( 1 \) Morse critical points, and it follows that totally nontrivial links cannot be destabilized, up to pseudostrict concordance.

\begin{definition}[Totally nontrivial]\label{4Def:tnt}
An oriented link \( L \hookrightarrow \Sigma_g \times I \) is \emph{totally nontrivial} if for every non-identity element \( \gamma \in H^1 \left( \Sigma_g ; \Z_2 \right) \) there exists \( x \in \dkh '' \left( D, \gamma \right) \) such that
\begin{equation*}
c ( x ) \neq \dfrac{1}{2} j ( x ).
\end{equation*}
\end{definition}

A totally nontrivial link must intersect the attaching spheres of destabilizing handles.
\begin{lemma}\label{4Prop:attaching}
Let \( \sigma \) be the attaching sphere of a destabilizing handle on \( \Sigma_g \). If \( L \hookrightarrow \Sigma_g \times I \) is totally nontrivial, then \( D \cap \sigma \neq \emptyset \) for all diagrams \( D \) of \( L \).
\end{lemma}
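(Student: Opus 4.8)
The plan is to argue by contraposition: I will show that if some diagram $D$ of $L$ satisfies $D \cap \sigma = \emptyset$, then $L$ is not totally nontrivial, the obstructing cohomology class being the Poincar\'e dual of $\sigma$.

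First I would record the relevant topology of the attaching sphere. By the discussion preceding \Cref{2Def:strict}, a destabilizing handle is a $3$-dimensional $2$-handle attached along a simple closed curve $\sigma$ on the level surface immediately preceding the corresponding index $1$ critical point, and calling it \emph{destabilizing} means exactly that this surgery lowers the genus by one. A surgery along a simple closed curve on a surface lowers the genus precisely when the curve is non-separating; hence $\sigma$ is non-separating, and therefore $\gamma := \mathrm{PD}(\sigma)$ is a non-identity element of $H^1(\Sigma_g; \Z_2)$ (it is non-zero already in the cohomology of the component of $\Sigma_g$ containing $\sigma$, and $H^1(\Sigma_g;\Z_2)$ is the direct sum of the contributions of the components).

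Now suppose $D$ is a diagram of $L$ with $D \cap \sigma = \emptyset$. Then $\sigma$ is a simple closed curve on $\Sigma_g$ representing $\gamma$ and disjoint from $D$, so the hypotheses of \Cref{3Lem:missing} are satisfied. In fact, as in the proof of that lemma, $D \cap \sigma = \emptyset$ forces no circle in the dotted cube $\llbracket D, \gamma \rrbracket$ to acquire a dot; consequently, by \Cref{3Eq:dottedgrading}, \emph{every} state of $\cdkh'' ( D, \gamma )$ satisfies $c = \tfrac12 j$, and hence so does every homogeneous class of $\dkh'' ( L, \gamma )$ --- in particular all of the distinguished generators $x^{\text{u/l}}, \overline{x}^{\text{u/l}}$. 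Thus there is no $x \in \dkh'' ( D, \gamma )$ with $c(x) \neq \tfrac12 j(x)$, so $L$ fails the condition of \Cref{4Def:tnt} for the non-identity class $\gamma$, i.e.\ $L$ is not totally nontrivial.

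This contradicts the hypothesis, which completes the contrapositive: every diagram $D$ of $L$ must satisfy $D \cap \sigma \neq \emptyset$. The only genuinely geometric input is the observation that the attaching curve of a destabilizing handle is non-separating, so that $\gamma \neq 0$ and \Cref{3Lem:missing} becomes applicable; I expect this --- together with the (minor) care needed when $\Sigma_g$ is disconnected --- to be the one place the argument could slip, while the remainder is a direct application of the previously established grading lemma.
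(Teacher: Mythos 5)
Your proposal is correct and follows essentially the same route as the paper: the paper's proof also takes \( \gamma \) to be the Poincar\'e dual of \( \sigma \), invokes total nontriviality to produce an element with \( c(x) \neq \tfrac12 j(x) \), and concludes via the contrapositive of \Cref{3Lem:missing}. Your explicit remark that a destabilizing attaching curve is non-separating, so that \( \gamma \) is a non-identity class (as required by \Cref{4Def:tnt}), is a point the paper leaves implicit, but it is the same argument.
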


\begin{proof}
Suppose \( \sigma \) represents (the Poincar\'e dual to)  \( \gamma \in H^1 \left( \Sigma_g ; \Z_2 \right) \). As \( L \) is totally nontrivial, \( \dkh '' \left( L , \gamma \right) \) must contain an element \( x \) such that
\begin{equation*}
c ( x ) \neq \dfrac{1}{2} j ( x ).
\end{equation*}
The result then follows from the contrapositive to \Cref{3Lem:missing}.
\end{proof}
It follows that if \( L \) is totally nontrivial and \( \sigma \) is the attaching sphere of destabilizing handle, then \( L \cap ( \sigma \times I ) \neq \emptyset \). Thus a totally nontrivial link does not support a destabilizing handle. We are interested in concordance, however, and therefore must verify that the totally nontrivial property interacts well with particular concordances.

\begin{proposition}\label{4Prop:conctnt}
Let \( L_1 \) and \( L_2 \) be strictly concordant links. Then \( L_1 \) is totally nontrivial if and only if \( L_2 \) is totally nontrivial.
\end{proposition}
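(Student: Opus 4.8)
The plan is to show that strict concordance induces a graded isomorphism that forces the totally nontrivial property to transfer. Recall from \Cref{3Def:cobmap} and \Cref{3Thm:isos} that a strict concordance $(S, \Sigma_g \times I)$ between $L_1$ and $L_2$ induces, for every $\gamma \in H^1(\Sigma_g;\Z_2)$, an isomorphism $\phi_S : \dkh''(L_1,\gamma) \to \dkh''(L_2,\gamma)$. Since $S$ is a disjoint union of annuli, $\chi(S) = 0$, so $\phi_S$ is filtered of $j$-degree $0$ and $c$-degree $0$; its inverse (the map induced by the reversed concordance) has the same property. First I would argue that, because $L_1$ and $L_2$ are concordant over the \emph{same} ambient surface $\Sigma_g$, the relevant cohomology group $H^1(\Sigma_g;\Z_2)$ is literally the same for both, so no identification of coefficient data is needed here — the statement is symmetric in $L_1$ and $L_2$, and it suffices to prove one direction.

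The key step is to pin down where the distinguished generators $x^{\text{u/l}}, \overline{x}^{\text{u/l}}$ go under $\phi_S$. By \Cref{3Lem:upperlower}, $\phi_S$ sends the quadruple of generators of $\dkh''(L_1,\gamma)$ to (nonzero scalar multiples of) the quadruple of generators of $\dkh''(L_2,\gamma)$, respecting the u/$\ell$ structure. Combined with the fact that $\phi_S$ is a $(j,c)$-bigraded isomorphism, this means that the multiset of values $\{(j(x), c(x)) : x \text{ a distinguished generator of } \dkh''(L_1,\gamma)\}$ equals the corresponding multiset for $L_2$. In particular, there exists a distinguished generator $x$ of $\dkh''(L_1,\gamma)$ with $c(x) \neq \tfrac12 j(x)$ if and only if the same holds for $\dkh''(L_2,\gamma)$. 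Running this over all non-identity $\gamma \in H^1(\Sigma_g;\Z_2)$ gives that $L_1$ is totally nontrivial iff $L_2$ is.

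One point that needs care: \Cref{4Def:tnt} asks for \emph{some} $x \in \dkh''(L,\gamma)$ with $c(x) \neq \tfrac12 j(x)$, not necessarily a distinguished generator. I would note that since $\phi_S$ is a filtered isomorphism of $(j,c)$-bigraded (or bifiltered) vector spaces with a bigraded inverse, it carries the set $\{x : c(x) \neq \tfrac12 j(x)\}$ bijectively — more precisely, the condition "$\dkh''(L,\gamma)$ is supported off the diagonal $c = \tfrac12 j$" is preserved, because a degree-$(0,0)$ isomorphism preserves the bigraded pieces. So the definition can be tested on the whole homology or on the distinguished generators interchangeably; either way the argument closes. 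The main obstacle I anticipate is being careful about whether the $c$-grading is an honest grading or only a filtration on $\dkh''$ (the excerpt says filtration); if it is only a filtration, I would phrase the invariance in terms of the induced filtration and use that $\phi_S$ and $\phi_S^{-1}$ are both filtered of degree $0$, hence $\phi_S$ restricts to an isomorphism on each associated graded piece, which is exactly what is needed to compare $c(x)$ against $\tfrac12 j(x)$.
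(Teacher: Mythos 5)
Your proposal is correct and takes essentially the same route as the paper, whose entire proof is to invoke the isomorphism \( \phi_S \) induced by a strict concordance (\Cref{3Thm:isos}), filtered of bidegree \( (0,0) \) since \( \chi(S)=0 \), to transfer the condition \( c(x) \neq \tfrac{1}{2} j(x) \) for each non-identity \( \gamma \). Your additional bookkeeping via the distinguished generators and \Cref{3Lem:upperlower} is more than the paper records, but the essential step is the same.
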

\begin{proof}
Invoke the isomorphism on the totally reduced homologies induced by a strict concordance, as stated in \Cref{3Thm:isos}.
\end{proof}

\Cref{4Prop:conctnt} is not enough to obstruct descent concordance, however. To see this, let \( ( S, M ) \) be a concordance with initial link \( L \hookrightarrow \Sigma_g \times I \). Traverse the concordance, starting at \( L \), until a critical point corresponding to a (de)stabilizing handle is met. Cutting open \( ( S, M ) \) immediately before this critical point yields \( ( S' , \Sigma_g \times I ) \), a pseudostrict genus \( 0 \) cobordism (not necessarily a concordance). It follows that a destabilization may occur at a link that is merely pseudostrictly genus \( 0 \) cobordant to \( L \). 

As such, we must verify that the totally nontrivial property obstructs index \(1\) critical points within pseudostrict genus \( 0 \) cobordisms obtained by cutting open concordances. To ease exposition we prove the case of strict genus \( 0 \) cobordisms; the pseudostrict case follows identically.
\begin{proposition}\label{4Prop:gen0tnt}
Let \( ( S_1 , \Sigma_{g_1} \times I ) \) be a strict genus \( 0 \) cobordism from \( L_1 \hookrightarrow \Sigma_{g_1} \times I \) to \( J \hookrightarrow \Sigma_{g_1} \times I \), and \( ( S_2 , M ) \) a cobordism from \( J \hookrightarrow \Sigma_{g_1} \times I \) to \( L_2 \hookrightarrow \Sigma_{g_2} \times I \). Suppose that \( ( S, M' ) = ( S_1 \cup_J S_2 , \Sigma_{g_1} \times I \cup M ) \) is a concordance from \( L_1 \hookrightarrow \Sigma_{g_1} \times I \) to \( L_2 \hookrightarrow \Sigma_{g_2} \times I \).

If \( L_1 \) is totally nontrivial and \( \sigma \) is the attaching sphere of a destabilizing handle on \( \Sigma_{g_1} \), then \( J \cap ( \sigma \times I ) \neq \emptyset \).
\end{proposition}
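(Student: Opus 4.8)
The plan is to argue by contradiction: suppose \( J \cap ( \sigma \times I ) = \emptyset \). Since \( \sigma \) is the attaching sphere of a destabilizing handle it is a non-separating curve on \( \Sigma_{g_1} \), so its Poincaré dual \( \gamma \in H^1 ( \Sigma_{g_1} ; \Z_2 ) \) is non-zero. Because \( J \) has a diagram disjoint from \( \sigma \), no circle in the associated dotted cube carries a \( \gamma \)-dot, and so \Cref{3Lem:missing} --- in the form supplied by its proof, i.e.\ on the nose at the level of states --- shows that \emph{every} non-zero class of \( \dkh'' ( J , \gamma ) \) satisfies \( c = \tfrac{1}{2} j \). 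On the other hand, since \( L_1 \) is totally nontrivial, some class of \( \dkh'' ( L_1 , \gamma ) \) fails this equality; as the \( j \)- and \( c \)-filtration levels of a class are the minima of those of the distinguished generators occurring in it, there is in fact a distinguished generator --- which, using \Cref{3Lem:gradingshift} and relabelling within its quadruple if necessary, we may take to be of the form \( x^{\text{u}} \) --- with \( c ( x^{\text{u}} ) \neq \tfrac{1}{2} j ( x^{\text{u}} ) \). The goal is to push this discrepancy through \( S_1 \) and land it in \( \dkh'' ( J , \gamma ) \).

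Next I extract the functorial input. The strict (hence pseudostrict) cobordism \( ( S_1 , \Sigma_{g_1} \times I ) \) and its reverse \( ( \overline{S_1} , \Sigma_{g_1} \times I ) \) induce maps \( \phi_{S_1} : \dkh'' ( L_1 , \gamma ) \to \dkh'' ( J , \gamma ) \) and \( \phi_{\overline{S_1}} : \dkh'' ( J , \gamma ) \to \dkh'' ( L_1 , \gamma ) \) (\Cref{3Def:psmap}), each filtered of \( j \)-degree \( \chi ( S_1 ) \) and \( c \)-degree \( \tfrac{1}{2} \chi ( S_1 ) \), and under \Cref{3Prop:leeiso} these correspond, after forgetting the \( c \)-grading, to the doubled Lee cobordism maps \( \widetilde{\phi}_{S_1} \), \( \widetilde{\phi}_{\overline{S_1}} \). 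Because \( ( S , M' ) \) is a concordance, \Cref{3Thm:isos} and \Cref{3Prop:factoring} give that \( \widetilde{\phi}_S = \widetilde{\phi}_{S_2} \circ \widetilde{\phi}_{S_1} \) is an isomorphism, whence \( \widetilde{\phi}_{S_1} \) is injective; applying the same to the reversed concordance, \( \widetilde{\phi}_{\overline{S_1}} \) is surjective. Now I invoke \( j \)-rigidity: by the red\slash green basis computation underlying \Cref{3Lem:upperlower}, doubled Lee cobordism maps act diagonally on distinguished generators, so --- exactly as in the proof that the Rasmussen invariant is a concordance invariant --- a concordance preserves the \( j \)-filtration level of each distinguished generator; feeding this into \( \widetilde{\phi}_S = \widetilde{\phi}_{S_2} \circ \widetilde{\phi}_{S_1} \) together with the filtered-degree bounds and \( \chi ( S_1 ) + \chi ( S_2 ) = \chi ( S ) = 0 \) forces all the intermediate inequalities to be equalities, giving \( j ( \phi_{S_1} ( x^{\text{u}} ) ) = j ( x^{\text{u}} ) + \chi ( S_1 ) \). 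An identical analysis of the reversed concordance produces a distinguished generator \( w \) of \( \dkh'' ( J , \gamma ) \) with \( \phi_{\overline{S_1}} ( w ) \) a non-zero multiple of \( x^{\text{u}} \) and \( j ( w ) = j ( x^{\text{u}} ) - \chi ( S_1 ) \). I expect this \( j \)-rigidity step to be the main obstacle: it is the one place where genuine work (the classical Lee/Rasmussen argument, adapted to the doubled setting) is required, the rest being bookkeeping with the stated lemmas; note in particular that this route sidesteps any need to control the topology of \( S_1 \), since the \( \chi ( S_1 ) \)-shifts cancel in the final squeeze.

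Finally I close the squeeze. By injectivity \( \phi_{S_1} ( x^{\text{u}} ) \) is a non-zero class of \( \dkh'' ( J , \gamma ) \), so by the first paragraph \( c ( \phi_{S_1} ( x^{\text{u}} ) ) = \tfrac{1}{2} j ( \phi_{S_1} ( x^{\text{u}} ) ) = \tfrac{1}{2} ( j ( x^{\text{u}} ) + \chi ( S_1 ) ) \); combined with the \( c \)-filtered bound \( c ( \phi_{S_1} ( x^{\text{u}} ) ) \geq c ( x^{\text{u}} ) + \tfrac{1}{2} \chi ( S_1 ) \) this yields \( c ( x^{\text{u}} ) \leq \tfrac{1}{2} j ( x^{\text{u}} ) \). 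Dually, \( w \) is a distinguished generator of \( \dkh'' ( J , \gamma ) \), so \( c ( w ) = \tfrac{1}{2} j ( w ) = \tfrac{1}{2} ( j ( x^{\text{u}} ) - \chi ( S_1 ) ) \), and the \( c \)-filtered bound for \( \phi_{\overline{S_1}} \), namely \( c ( x^{\text{u}} ) = c ( \phi_{\overline{S_1}} ( w ) ) \geq c ( w ) + \tfrac{1}{2} \chi ( S_1 ) \), yields \( c ( x^{\text{u}} ) \geq \tfrac{1}{2} j ( x^{\text{u}} ) \). Hence \( c ( x^{\text{u}} ) = \tfrac{1}{2} j ( x^{\text{u}} ) \), contradicting the choice of \( x^{\text{u}} \); therefore \( J \cap ( \sigma \times I ) \neq \emptyset \). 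The pseudostrict case promised in the surrounding text runs verbatim, with \Cref{3Cor:iso} replacing \Cref{3Thm:isos}.
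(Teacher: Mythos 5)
Your proposal is correct in outline and reaches the same contradiction as the paper, but through a genuinely different middle step. Both arguments share the shell: assume \( J \cap (\sigma \times I) = \emptyset \), use \Cref{3Lem:missing} at the level of states to see that every nonzero class of \( \dkh'' ( J, \gamma ) \) satisfies \( c = \tfrac12 j \), obtain injectivity of \( \phi_{S_1} \) from \Cref{3Thm:isos}, \Cref{3Prop:factoring} and \Cref{3Prop:leeiso}, and contradict total nontriviality of \( L_1 \). The paper, however, never pins down the exact filtration shift of \( \phi_{S_1} \): it treats the actual shifts of \( x^{\text{u}} \) as unknowns \( m_1, m_2 \), shows via \Cref{3Lem:gradingshift} and \Cref{3Lem:upperlower} (a mod-\(4\) argument) that \( x^{\ell} \) shifts by the same amounts, and then uses the fact that in the undotted complex of \( J \) the \( j \)- and \( c \)-filtration jumps within a homology class are locked together as \( (4t, 2t) \), forcing \( \tfrac12 m_1 - m_2 = 0 \) and hence \( k = \tfrac12 \); only the forward map is used. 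You instead establish the exact equality \( j ( \phi_{S_1} ( x^{\text{u}} ) ) = j ( x^{\text{u}} ) + \chi ( S_1 ) \) by a two-sided rigidity argument and a sandwich through \( \widetilde{\phi}_S = \widetilde{\phi}_{S_2} \circ \widetilde{\phi}_{S_1} \), and run the same sandwich on the reversed concordance to produce \( w \in \dkh'' ( J, \gamma ) \) mapping onto \( x^{\text{u}} \) with \( j ( w ) = j ( x^{\text{u}} ) - \chi ( S_1 ) \); the two \( c \)-filtered bounds then squeeze \( c ( x^{\text{u}} ) = \tfrac12 j ( x^{\text{u}} ) \). Your route avoids the paper's delicate coupling-of-jumps analysis and its upper/lower lemma, at the price of needing the reversed cobordism maps and an exact-preservation statement that the paper never has to prove.

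Two justifications need tightening, though neither is fatal. First, doubled Lee cobordism maps do not literally act diagonally on distinguished generators: \( S_1 \), and any elementary decomposition of \( S \), may contain births, and a birth sends a generator to a sum of two generators in the red/green basis. Diagonality is not needed: exact preservation of every \( j \)-filtration level by \( \widetilde{\phi}_S \) follows because \( \widetilde{\phi}_S \) and \( \widetilde{\phi}_{\overline{S}} \) are isomorphisms filtered of degree \( \chi ( S ) = 0 \) between finite-dimensional spaces, so the filtration subspaces \( F_a \) on both sides have equal dimension and \( \widetilde{\phi}_S ( F_a ) = F_a \) for all \( a \); this is the correct form of the ``Rasmussen-style'' argument you gesture at. Likewise \( w \) need not be a distinguished generator: take \( y \) with \( \widetilde{\phi}_{\overline{S}} ( y ) = x^{\text{u}} \) and set \( w = \widetilde{\phi}_{\overline{S_2}} ( y ) \), which suffices since \( c = \tfrac12 j \) holds for all nonzero classes of the undotted complex, not only for generators. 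Second, your claim that the filtration level of a class is the minimum of the levels of the distinguished generators occurring in it is not literally true (levels can jump under cancellation); the reduction of the totally nontrivial witness to a distinguished generator is harmless here, but only because the definition is invoked, exactly as in the paper's own proof, through the quadruples \( x^{\text{u}}, x^{\ell}, \overline{x}^{\text{u}}, \overline{x}^{\ell} \). With these repairs your argument goes through, including the verbatim transfer to the pseudostrict case via \Cref{3Def:psmap} and \Cref{3Cor:iso}.
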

This proposition requires the assumption that the genus \( 0 \) cobordism is obtained by cutting open a concordance: there exist totally nontrivial links that are genus \( 0 \) cobordant to the unknot (the relevant genus \( 0 \) cobordism may not appear within a concordance, therefore).

\begin{proof}[Proof of \Cref{4Prop:gen0tnt}]
The map \( \widetilde{\phi}_S : \dkh ' ( L_1 ) \rightarrow \dkh '' ( L_2 ) \) is an isomorphism by \Cref{3Thm:isos}, and \( \widetilde{\phi}_S = \widetilde{\phi}_{S_2} \circ \widetilde{\phi_{S_1}} \) by \Cref{3Prop:factoring}. Therefore \( \widetilde{\phi_{S_1}} \) is injective, and so is \( \phi_{S_1} \) by \Cref{3Prop:leeiso}.

We require a fact regarding \( \phi_{S_1} \) and the upper/lower superscripts. Suppose that
\begin{equation}\label{4Eq:phiupper}
	\begin{aligned}
		j ( \phi_{S_1} ( x^{\text{u}} ) ) &= j ( x^{\text{u}} ) + m_1 \\
		c ( \phi_{S_1} ( x^{\text{u}} ) ) &= c ( x^{\text{u}} ) + m_2.
	\end{aligned}
\end{equation}
(Note that the injectivity of \( \phi_{S_1} \) guarantees that \( \phi_{S_1} ( x^{\text{u/l}} ) \neq 0 \).) We claim that
\begin{equation}\label{4Eq:philower}
	\begin{aligned}
		j ( \phi_{S_1} ( x^{\ell} ) ) &= j ( x^{\ell} ) + m_1 \\
		c ( \phi_{S_1} ( x^{\ell} ) ) &= c ( x^{\ell} ) + m_2 
	\end{aligned}
\end{equation}
also. To see this, apply \Cref{3Lem:gradingshift} and \Cref{3Lem:upperlower} to \Cref{4Eq:phiupper} to obtain
\begin{equation*}
		j ( \phi_{S_1} ( x^{\ell} ) ) \pm 1 = j ( x^{\ell} ) + 1 + m_1
\end{equation*}
so that \( j ( \phi_{S_1} ( x^{\ell} ) ) = j ( x^{\ell} ) \mp 1 + 1 + m_1\). Recall that the differential of the totally reduced homology consists of a component of \(j\)-degree \( 0 \) and another of \(j\)-degree \( +4 \), and that \( \phi_{S_1} \) is \(j\)-graded (\(c\)-graded) of degree \( \chi ( S_1 ) \) (\( \frac{1}{2} \chi (S_1) \). Therefore we have \( m_1 = \chi (S_1) \mod 4 \). If \( j ( \phi_{S_1} ( x^{\ell} ) ) = j ( x^{\ell} ) + 2 + m_1 \), then \( 2 + m_1 = \chi (S_1) \mod 4 \) also, yielding a contradiction. The argument for the \(c\)-grading statement is essentially identical.

We now verify the proposition. Assume towards a contradiction that there exists \( \sigma \), the attaching sphere of a destabilizing handle, such that \( J \cap ( \sigma \times I ) = \emptyset \). Therefore there exists \( D \), a diagram of \( J \), such that \( D \cap \sigma = \emptyset \). Let \( \sigma \) represent the Poincar\'e dual to \( \gamma \in H^1 \left( \Sigma_g ; \Z_2 \right) \). In what follows we shall continue to denote the induced filtration gradings on \( \dkh '' ( J , \gamma ) \) by \( j \) and \( c \), and denote the honest gradings on \( \cdkh '' ( D , \gamma ) \) by \( \widetilde{j} \) and \( \widetilde{c} \).

As \( D \cap \sigma = \emptyset \), no smoothings within \( \llbracket D , \gamma \rrbracket \) acquire dots, and we have
\begin{equation}\label{4Eq:g1}
	\begin{aligned}
		\widetilde{c} ( y ) &= \dfrac{1}{2} \widetilde{j} ( y ) \\
		c ( [y] ) &= \dfrac{1}{2} j ( [y] )
	\end{aligned}
\end{equation}
for all \( y \in \cdkh '' ( D , \gamma ) \) by \Cref{3Eq:dottedgrading}.

As \( L_1 \) is totally nontrivial there exists an \( x^{\text{u/l}} \in \dkh '' \left( L , \gamma \right) \) such that \( c ( x^{\text{u/l}} ) \neq \frac{1}{2} j ( x^{\text{u/l}} ) \). Suppose that
\begin{equation}\label{4Eq:g2}
	c ( x^{\text{u/l}} ) = k j ( x^{\text{u/l}} ), \quad k \in \Z \left[ \frac{1}{2} \right] .
\end{equation}
Without loss of generality we may assume that \( j ( x^{\text{u/l}} ) \neq 0 \): if \( j ( x^{\text{u/l}} ) = 0 \) it follows from \Cref{3Lem:gradingshift,3Lem:gradingbar} that at least one of \( \lbrace x^{\text{l/u}}, \overline{x}^{\text{u}}, \overline{x}^{\ell} \rbrace \) is as desired. Via \Cref{4Eq:phiupper,4Eq:philower,4Eq:g1,4Eq:g2} we obtain
\begin{equation*}
	\begin{aligned}
		c ( \phi_{S_1} ( x^{\text{u/l}} ) ) &= c ( x^{\text{u/l}} ) + m_2 \\
		\dfrac{1}{2} j ( \phi_{S_1} ( x^{\text{u/l}} ) ) &= k j ( x^{\text{u/l}} ) + m_2 \\
		\dfrac{1}{2} \left( j ( x^{\text{u/l}} ) + m_1 \right) &= k j ( x^{\text{u/l}} ) + m_2
	\end{aligned}
\end{equation*}
so that
\begin{equation}\label{4Eq:p1}
	\dfrac{1}{2} m_1 - m_2 = \left( k - \dfrac{1}{2} \right) j ( x^{\text{u/l}} ).
\end{equation}

Suppose \( y_1, y_2 \in \cdkh '' ( D , \gamma )  \) are homologous and
\begin{equation*}
	\begin{aligned}
		\widetilde{j} ( y_1 ) &\geq \widetilde{j} ( y_2 ) \\
		\widetilde{c} ( y_1 ) &\leq \widetilde{c} ( y_2 ).
	\end{aligned}
\end{equation*}
But \( \widetilde{c} ( y_i ) = \dfrac{1}{2} \widetilde{j} ( y_i ) \) so that \( \widetilde{c} ( y_1 ) = \widetilde{c} ( y_2 ) \), and \( 
\widetilde{j} ( y_1 ) = \widetilde{j} ( y_2 ) \). Thus the \( j \)- and \( c \)-gradings may be realised on one element of a homology class. Further, we have
\begin{equation*}
	\begin{aligned}
		\widetilde{j} ( y_1 ) &= \widetilde{j} ( y_2 ) + 4 r_1 \\
		\widetilde{c} ( y_1 ) &= \widetilde{c} ( y_2 ) + 2 r_2
	\end{aligned}
\end{equation*}
for \( r_1, r_2 \in \N \), due to the form of the differential of \( \cdkh '' ( D , \gamma ) \). \Cref{4Eq:g1} implies that
\begin{equation*}
	\begin{aligned}
		\widetilde{j} ( y_1 ) &= \widetilde{j} ( y_2 ) + 4 r_1 \\
		2\widetilde{c} ( y_1 ) &= 2\widetilde{c} ( y_2 ) + 4 r_1 \\
		\widetilde{c} ( y_1 ) &= \widetilde{c} ( y_2 ) + 2 r_1 \\
	\end{aligned}
\end{equation*}
so that \( r_1 = r_2 \). It follows that
\begin{equation*}
	\begin{aligned}
		m_1 &= \chi ( S ) + 4t \\
		m_2 &= \dfrac{1}{2} \chi ( S ) + 2t
	\end{aligned}
\end{equation*}
for \( m_1 \) and \( m_2 \) as given in \Cref{4Eq:phiupper,4Eq:philower}, and \( t \in \N \). Then
\begin{equation}\label{4Eq:p2}
	\dfrac{1}{2} m_1 - m_2 = \dfrac{1}{2} \chi ( S ) + 2t - \dfrac{1}{2} \chi ( S ) - 2t = 0.
\end{equation}
Combining \Cref{4Eq:p1,4Eq:p2}, and recalling that \( j ( x^{\text{u/l}} ) \neq 0 \), we obtain \( k - \frac{1}{2} = 0 \), a contradiction.
\end{proof}

\begin{proposition}\label{4Prop:psgen0tnt}
Let \( ( S_1 , \Sigma_{g_1} \times I ) \) be a pseudostrict genus \( 0 \) cobordism from \( L_1 \hookrightarrow \Sigma_{g_1} \times I \) to \( J \hookrightarrow \Sigma_{g_1} \times I \), and \( ( S_2 , M ) \) a cobordism from \( J \hookrightarrow \Sigma_{g_1} \times I \) to \( L_2 \hookrightarrow \Sigma_{g_2} \times I \). Suppose that \( ( S, M' ) = ( S_1 \cup_J S_2 , \Sigma_{g_1} \times I \cup M ) \) is a concordance from \( L_1 \hookrightarrow \Sigma_{g_1} \times I \) to \( L_2 \hookrightarrow \Sigma_{g_2} \times I \).

If \( L_1 \) is totally nontrivial and \( \sigma \) is the attaching sphere of a destabilizing handle on \( \Sigma_{g_1} \), then \( J \cap ( \sigma \times I ) \neq \emptyset \).
\end{proposition}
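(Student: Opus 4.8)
The plan is to transcribe the proof of \Cref{4Prop:gen0tnt} almost verbatim, replacing each use of a property of \emph{strict} cobordisms by the corresponding property of \emph{pseudostrict} cobordisms established in \Cref{3Subsec:func}. The strict argument has three moving parts, and I would check each transfers. First, for the composite $(S,M')$ the map $\widetilde{\phi}_S$ is an isomorphism by \Cref{3Thm:isos} (which already allows arbitrary concordances), and $\widetilde{\phi}_S = \widetilde{\phi}_{S_2}\circ\widetilde{\phi}_{S_1}$ by \Cref{3Prop:factoring}; hence $\widetilde{\phi}_{S_1}$ is injective, and so is the pseudostrict cobordism map $\phi_{S_1}$ of \Cref{3Def:psmap} via \Cref{3Prop:leeiso}. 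Second---this is the only point needing genuine care---I would verify that $\phi_{S_1}$ has the same formal features used in the strict proof: that it is filtered of $j$-degree $\chi(S_1)$ and $c$-degree $\frac{1}{2}\chi(S_1)$ (recorded after \Cref{3Def:psmap}), and that at the chain level it is a composite of maps built from the elementary pieces $m''$, $\Delta''$, $\eta''$. The latter holds because, by \Cref{3Prop:pseudoiso}, passing a pseudostrict critical point leaves the totally reduced complex unchanged (under the identification of \Cref{3Eq:homiso}), so $\phi_{S_1}$ is the concatenation of the maps assigned to the strict pieces of $(S_1,\Sigma_{g_1}\times I)$, each of which is built from $m''$, $\Delta''$, $\eta''$ exactly as in the proof of \Cref{3Lem:upperlower}. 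Consequently \Cref{3Lem:upperlower} applies to $\phi_{S_1}$, and the $\bmod 4$ bookkeeping goes through.

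With these in hand the derivation of \Cref{4Eq:philower} from \Cref{4Eq:phiupper}---using \Cref{3Lem:gradingshift}, \Cref{3Lem:upperlower}, and the fact that the totally reduced differential has components of $j$-degree $0$ and $+4$ only---is word for word as in \Cref{4Prop:gen0tnt}. The third ingredient, that $J\cap\sigma=\emptyset$ forces no smoothing in $\llbracket D,\gamma\rrbracket$ to acquire a dot and hence $\widetilde c(y)=\frac{1}{2}\widetilde j(y)$ for every chain $y$, is a statement about $J$ alone and needs no change. So I would then run the two grading computations exactly as before: total nontriviality of $L_1$ transported through $\phi_{S_1}$ gives \Cref{4Eq:p1}, namely $\frac{1}{2} m_1 - m_2 = (k-\frac{1}{2})\,j(x^{\text{u/l}})$; the relation $\widetilde c = \frac{1}{2}\widetilde j$ on $\cdkh''(D,\gamma)$ forces $m_1 = \chi(S)+4t$ and $m_2 = \frac{1}{2}\chi(S)+2t$, hence \Cref{4Eq:p2}, namely $\frac{1}{2}m_1-m_2=0$; and since $j(x^{\text{u/l}})\neq 0$ we conclude $k=\frac{1}{2}$, contradicting total nontriviality.

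The main obstacle, as flagged, is the second step: making precise that the pseudostrict cobordism map $\phi_{S_1}$ of \Cref{3Def:psmap} is genuinely a composite of $m''$-, $\Delta''$-, $\eta''$-type maps with the stated filtration degrees, so that \Cref{3Lem:upperlower} and the modular arithmetic of the strict proof remain available; everything else is routine transcription. A minor bookkeeping remark: if the two copies of $\Sigma_{g_1}$ bounding the underlying $3$-manifold are identified only through the isomorphism $f$ of \Cref{3Eq:homiso} rather than literally, one carries $f(\gamma)$ through the argument, which by \Cref{3Prop:pseudoiso} alters nothing.
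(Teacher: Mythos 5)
Your proposal is correct and follows exactly the paper's route: the paper's own proof is the one-line observation that the argument of \Cref{4Prop:gen0tnt} goes through verbatim once the strict concordance map is replaced by the pseudostrict map of \Cref{3Def:psmap}, whose factoring property and filtration degrees are recorded there. Your additional verification that \( \phi_{S_1} \) is a concatenation of strict pieces (via \Cref{3Prop:pseudoiso}) so that \Cref{3Lem:upperlower} and the mod \(4\) bookkeeping still apply is exactly the implicit content of that remark.
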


\begin{proof}
The proof is almost identical to that of \Cref{4Prop:gen0tnt}: simply replace the map assigned to a strict concordance with that assigned to a pseudostrict concordance, given in \Cref{3Def:psmap}.
\end{proof}

With the case of pseudostrict genus \( 0 \) cobordisms complete we can obstruct descent concordance.
\begin{theorem}\label{4Thm:ascent}
Let \( \left( S , M \right) \) be a concordance from \( L_1 \hookrightarrow \Sigma_{g_1} \times I \) to \( L_2 \hookrightarrow \Sigma_{g_2} \times I \). Suppose that \( L_1 \) is totally nontrivial and either
\begin{enumerate}[(i)]
\item \( g_1 > g_2 \)
\end{enumerate}
or
\begin{enumerate}[(i), start=2]
\item \( g_1 = g_2 \) and \( \left( S , M \right) \) is not pseudostrict.
\end{enumerate}
Then \( ( S , M ) \) is ascent.
\end{theorem}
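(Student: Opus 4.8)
The plan is to argue by contradiction: suppose $(S,M)$ is \emph{descent}, so that $M$ carries a Morse function $f$ with no exceeding index $2$ critical point; normalise it by $f(\Sigma_{g_1}) = \{1\}$, $f(\Sigma_{g_2}) = \{0\}$ and put $(S,M)$ in Morse position with respect to $f$, so that $S$ is transverse to the $I$-factor of $M \times I$. In both cases (i) and (ii) we have $\max(g_1,g_2) = g_1$, so ``exceeding'' just means the subsequent level surface has genus $> g_1$; since descent rules out such critical points and the top level surface has genus $g_1$, the genus of level surfaces never exceeds $g_1$. I would first locate a genus-reducing (index $1$) critical point. In case (i) this is automatic, as the level surfaces must pass from genus $g_1$ to genus $g_2 < g_1$. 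In case (ii), were there no genus-changing critical point at all, every critical point of $f$ would be of index $0$ or $3$, of index $1$ with separating attaching sphere (a genus-preserving index $1$ handle is precisely one with separating attaching sphere), or of index $2$ attached between disjoint components (a genus-preserving index $2$ handle is precisely such a one), i.e.\ $f$ would be of pseudostrict type and $(S,M)$ pseudostrict, contradicting the hypothesis. So in both cases $f$ has a genus-changing critical point; reading the concordance from $L_1$, the first one cannot increase the genus --- that would be an index $2$ critical point after which the level surface has genus $g_1 + 1 > g_1$, hence an exceeding one --- so it is a genus-reducing index $1$ critical point $p$, at level $t^{\ast}$, with non-separating attaching sphere $\sigma \subset f^{-1}(t^{\ast} + \delta_0)$ for small $\delta_0 > 0$.

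Next I would cut the concordance just above $p$ and feed the pieces into \Cref{4Prop:psgen0tnt}. Choose a regular value $t_0$ with $t^{\ast} + \delta_0 < t_0$ and no critical value of $f$ in $(t^{\ast}, t_0]$, and set $M_1 = f^{-1}([t_0,1])$, $M_2 = f^{-1}([0,t_0])$, $S_i = S \cap (M_i \times I)$, and $J = S \cap (f^{-1}(t_0) \times I)$, a link in the thickened genus-$g_1$ surface $f^{-1}(t_0) \times I$. By minimality of $t^{\ast}$, every critical value of $f$ in $(t_0,1)$ exceeds $t^{\ast}$ and is therefore of pseudostrict type, so $(S_1,M_1)$ is a pseudostrict cobordism; it has genus $0$ because $S_1 \subseteq S$ is a union of annuli. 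Thus $(S_1,M_1)$ is a pseudostrict genus $0$ cobordism from $L_1$ to $J$, $(S_2,M_2)$ is a cobordism from $J$ to $L_2$, and $(S_1,M_1) \cup_J (S_2,M_2) = (S,M)$ is a concordance. Since $L_1$ is totally nontrivial, \Cref{4Prop:psgen0tnt} applies and gives: for the attaching sphere $\sigma'$ of \emph{every} destabilizing handle on $f^{-1}(t_0)$ one has $J \cap (\sigma' \times I) \neq \emptyset$; equivalently, no diagram of $J$ is disjoint from $\sigma'$.

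Finally I would contradict this using the destabilization at $p$. The gradient flow of $f$ over the critical-value-free interval $(t^{\ast}, t_0]$ carries $\sigma$ to a non-separating curve $\sigma_0 \subset f^{-1}(t_0)$, which is the attaching sphere of a destabilizing handle there. On the other hand, crossing $p$ forces $J$ to be isotopic in $f^{-1}(t_0) \times I \cong \Sigma_{g_1} \times I$ to a link disjoint from $\sigma_0 \times I$: after isotoping $S$, rel $\partial S$ and compatibly with the Morse position, so that it avoids the $1$-dimensional critical locus $\{p\} \times I$ inside the $4$-manifold $M \times I$, the fact that $S$ is transverse to the level sets $f^{-1}(\tau) \times I$ for all $\tau \in (t^{\ast}, t_0]$ makes the part of $S$ over that interval a product cobordism, which carries $J$ down to $S \cap (f^{-1}(t^{\ast}) \times I)$; the latter lies in the complement of the shrinking attaching annulus of $\sigma$, and flowing it back up to level $t_0$ exhibits a representative of $J$ disjoint from $\sigma_0 \times I$ --- contradicting the previous paragraph. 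Hence no Morse function on $M$ avoids an exceeding index $2$ critical point, so in particular $(S,M)$ is ascent.

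I expect the main obstacle to be that last step: formalising, within the transversality set-up surrounding \Cref{2Def:exceeding}, the intuitively obvious statement that a destabilization move requires the ambient link to support the corresponding destabilizing handle --- equivalently, that the link sitting just above an index $1$ critical point of $f$ is isotopic to one disjoint from that handle's vertical attaching annulus. The ingredient that makes this work is the observation that $S$ can be put into Morse position and simultaneously be made to miss the critical loci $\{p_i\} \times I$ of the extended function on $M \times I$, a generic condition since $\dim S + \dim(\{p_i\} \times I) = 3 < 4$; once $S$ misses $\{p\} \times I$, the product structure of $S$ over $(t^{\ast}, t_0]$ supplies the required isotopy, and everything else is bookkeeping with the genera of level surfaces and the definitions of pseudostrict and exceeding.
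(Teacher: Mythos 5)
Your argument is correct and is essentially the paper's own proof: take a Morse function on \( M \), locate the first genus-changing critical point, rule out its being a destabilization by cutting the concordance just above it and applying \Cref{4Prop:psgen0tnt} to the intermediate link \( J \) (using that \( L_1 \) is totally nontrivial), and conclude the critical point is a stabilization, hence exceeding. Your last paragraph simply fills in, via general position of \( S \) against \( \{p\} \times I \) and the product structure of \( S \) over a critical-value-free interval, the step the paper asserts in a single sentence (that \( J \cap (\sigma \times I) \neq \emptyset \) is incompatible with \( S \) being smoothly embedded across the destabilizing handle).
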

\begin{proof}
We prove Case (ii). Let \( f \) be a Morse function as in \Cref{2Def:exceeding}. As \( \left( S , M \right) \) is not pseudostrict, when traversing the concordance we must encounter a critical point corresponding to a (de)stabilizing handle addition. Let \( p \) be the first such critical point met. Assume towards a contradiction that \( p \) is a destabilizing index \(1\) critical point. At \( p \) a \(3\)-dimensional \(2\)-handle attachment occurs. Let \( \Sigma_{g} = f^{-1} \left( f(p) + \epsilon \right) \) be the level surface to which this handle is attached, and \( \sigma \) the attaching sphere (a nonseparating simple closed curve on \( \Sigma_{g} \)).

The intersection \( \left( \Sigma_{g} \times I \right) \cap S \) is a link in \( \Sigma_{g} \times I \), denoted \( J \). The link \( J \) satisfies the hypothesis of \Cref{4Prop:psgen0tnt}, so that \( J \cap (\sigma \times I) \neq \emptyset \). But if this intersection is non-empty then \( S \) is not smoothly embedded in \( M \), yielding a contradiction. It follows that the critical point \( p \) must be of index \( 2 \), and correspond to a stabilizing handle addition. Thus the genus of level surfaces appearing immediately after it is \( g_1 + 1 \). As \( g_1 + 1 > g_1 = g_2 \) the critical point \( p \) is exceeding, and \( \left( S , M \right) \) is ascent.

The proof for Case (i) is identical, as the fact that \( g_1 > g_2 \) guarantees that \( \left( S , M \right) \) is not pseudostrict.
\end{proof}

\begin{corollary}
Let \( L \hookrightarrow \Sigma_g \times I \) and \( L' \hookrightarrow \Sigma_g \times I \) be concordant and \( L \) totally nontrivial. If \( \dkh \left( L, \gamma \right) \neq \dkh \left( L', \gamma \right) \) for some \( \gamma \in H^1 \left( \Sigma_g ; \Z_2 \right) \), then \( L \) and \( L ' \) are ascent concordant.
\end{corollary}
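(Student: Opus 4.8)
The plan is to show that \emph{no} concordance from $L$ to $L'$ is descent; combined with the hypothesis that $L$ and $L'$ are concordant, this is precisely the statement that they are ascent concordant. So let $(S,M)$ be an arbitrary concordance from $L$ to $L'$, noting that $g_1 = g_2 = g$ here. The first move is to reduce to the pseudostrict case. Since $L$ is totally nontrivial and $g_1 = g_2$, Case (ii) of \Cref{4Thm:ascent} applies: if $(S,M)$ is not pseudostrict then it is ascent. Moreover the proof of \Cref{4Thm:ascent} actually produces an exceeding index $2$ critical point for \emph{any} Morse function on $M$ (with $S$ transverse to the $I$-factor), so such an $(S,M)$ is not descent. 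Hence any descent concordance from $L$ to $L'$ must be pseudostrict, and it remains to rule those out.

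The second move is to show that a pseudostrict concordance would force $\dkh(L,\gamma) \cong \dkh(L',\gamma)$ for every $\gamma$, contradicting the hypothesis. The proof of \Cref{3Prop:pseudoiso} shows that passing a single pseudostrict critical point of $M$ leaves unchanged the number of circles in each smoothing, the edges of the cube of resolutions, and the dotting (after the identification of cohomology classes of \Cref{3Eq:homiso}); since that argument never uses the particular differential, it applies verbatim with $\cdkh$ in place of $\cdkh''$. Decomposing a pseudostrict concordance into elementary pieces --- each a pseudostrict concordance with $S$ a product cobordism and $M$ containing exactly one critical point --- and composing the chain-level identifications contributed by the pieces (cf.\ \Cref{3Prop:factoring}), one concludes that $\cdkh(D,\gamma)$ and $\cdkh(D',\gamma)$ are \emph{identical} chain complexes for any diagrams $D$, $D'$ of $L$, $L'$, with cohomology classes matched by the suppressed identification of \Cref{3Def:psmap}. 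In particular $\dkh(L,\gamma) \cong \dkh(L',\gamma)$ for all $\gamma \in H^1(\Sigma_g ; \Z_2)$, contradicting $\dkh(L,\gamma) \neq \dkh(L',\gamma)$. Thus $(S,M)$ is not descent; as $(S,M)$ was arbitrary, $L$ and $L'$ are ascent concordant.

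The first move is immediate from \Cref{4Thm:ascent} once one records that its conclusion is incompatible with descent, so the only real work lies in the bookkeeping of the second move. One must check that an arbitrary pseudostrict concordance may be arranged to split into pieces to which \Cref{3Prop:pseudoiso} applies --- that on each slab between consecutive critical values of a Morse function on $M$ the surface $S$ restricts to a product cobordism, which uses that the genus, hence the intermediate link up to isotopy, is constant along a slab of a pseudostrict cobordism --- and, more delicately, that the cohomology identifications $f$ of \Cref{3Eq:homiso} supplied by the individual pieces compose to an automorphism of $H^1(\Sigma_g ; \Z_2)$ under which $\cdkh(D,\gamma)$ is matched with $\cdkh(D',\gamma)$ for one and the same $\gamma$, i.e.\ that passing through the disjoint unions of surfaces occurring mid-concordance does not scramble the class $\gamma$. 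This tracking of the class $\gamma$ through the pseudostrict concordance is the point I expect to require the most care.
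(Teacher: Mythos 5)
Your first move is fine and matches the paper: \Cref{4Thm:ascent}(ii) (whose proof, as you note, produces an exceeding critical point for \emph{every} Morse function on \( M \), hence rules out descent) reduces the problem to excluding a pseudostrict concordance between \( L \) and \( L' \). The gap is in your second move. You claim that an arbitrary pseudostrict concordance can be cut into elementary pieces in each of which \( S \) restricts to a \emph{product} cobordism, justified by the remark that the genus of level surfaces is constant along a slab, ``hence the intermediate link up to isotopy is constant''. This inference is false: constancy of the genus of the level surfaces of \( M \) says nothing about \( S \). Cutting a concordance along regular levels of a Morse function on \( M \) produces, on each slab, a strict cobordism whose surface is a collection of planar pieces of the annuli, generally carrying births, deaths and saddles; the links appearing at different levels of a slab are strictly cobordant, not isotopic. (Already for a strict concordance, \( M = \Sigma_g \times I \) with no critical points at all, your claim would force \( L \) and \( L' \) to be isotopic, which is far stronger than concordant.) Consequently \Cref{3Prop:pseudoiso} — which is a purely local statement about a single critical point of \( M \) with \( S \) a product nearby — cannot be globalised into the chain-level identity \( \cdkh(D,\gamma) = \cdkh(D',\gamma) \) that your argument needs, and your contradiction with \( \dkh(L,\gamma) \neq \dkh(L',\gamma) \) is not established.

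The paper does not argue at the chain level here. Its one-line proof invokes \Cref{3Cor:iso}: a pseudostrict concordance induces an isomorphism on the homology, which in turn rests on the functoriality of the theory under strict cobordisms (\Cref{3Def:cobmap}, \Cref{3Def:psmap}), the local identifications of \Cref{3Prop:pseudoiso} at the critical points of \( M \), and the nontrivial theorem that concordance-induced maps are isomorphisms (\Cref{3Thm:isos}, the Lee/Rasmussen-type input proved in the cited references). That isomorphism theorem is exactly the ingredient your product-decomposition was trying to avoid, and it cannot be avoided: the invariance under pseudostrict concordance is a statement about induced maps on homology, not an identity of complexes. To repair your write-up, replace the whole second move by an appeal to \Cref{3Cor:iso} (noting the identification of cohomology classes supplied by \Cref{3Eq:homiso}), which is precisely the paper's proof.
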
 
\begin{proof}
If \( \dkh \left( L, \gamma \right) \neq \dkh \left( L', \gamma \right) \) then \( L \) and \( L ' \) are not pseudostrictly concordant by \Cref{3Cor:iso}, and by \Cref{4Thm:ascent} any concordance between them is ascent.
\end{proof}

\subsection{Examples}\label{4Subsec:examples}
In this section we prove the theorem stated on \cpageref{1Thm:main}, presenting infinite families of ascent concordant links. First, we present a pair of links whose descent concordance may be obstructed using the totally reduced homology, or by an elementary method. Next, we present a pair of links to which this elementary method does not apply, necessitating the use of the totally reduced homology. We conclude by presenting further examples of ascent concordant links not amenable to the elementary method. Throughout this section we shall denote by \( \lbrace \alpha, \beta \rbrace \) a basis of \( H^1 ( \Sigma_1 ; \Z_2 ) \).

\subsubsection{First family}\label{Sec:1fam}
\begin{figure}
\includegraphics[scale=0.6]{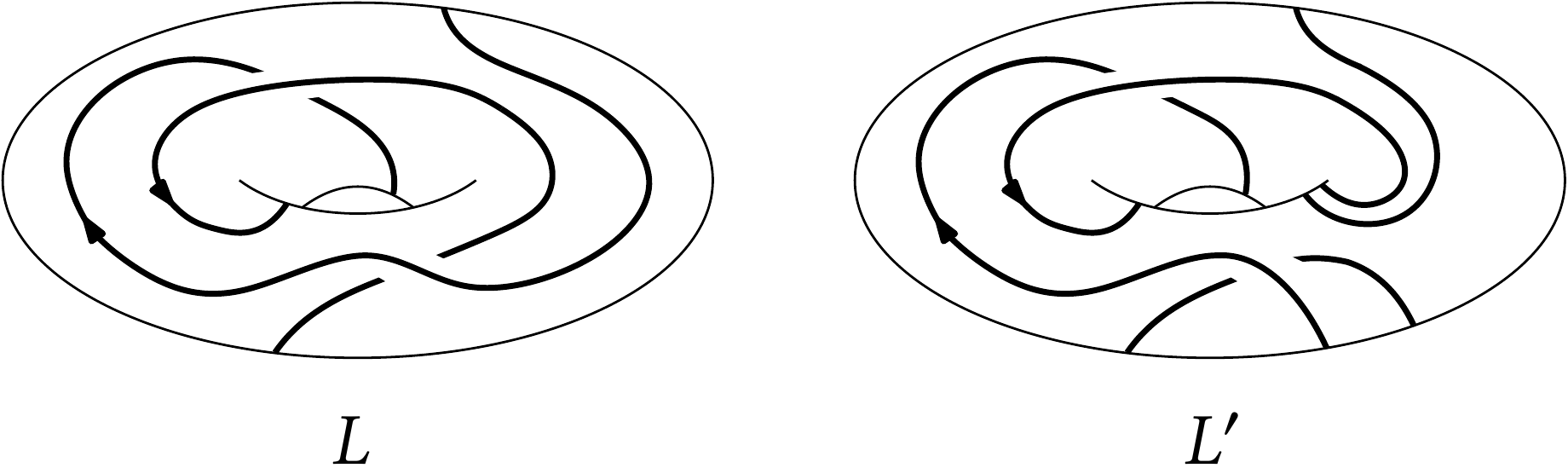}
\caption{A pair of ascent concordant links.}
\label{4Fig:links1}
\end{figure}

Consider the two-component links \( L \hookrightarrow \Sigma_1 \times I \) and \( L ' \hookrightarrow \Sigma_1 \times I \) given in \Cref{4Fig:links1}. The link \( L \) is totally nontrivial: the homologies \( \dkh '' ( L , \gamma ) \) possess a generator of \( (j , c ) \)-bidegree \( (-3, \frac{1}{2} ) \) for all \( \gamma \in \lbrace \alpha, \beta, \alpha + \beta \rbrace \).

Observe that \( L' \) is obtained from \( L \) via a Dehn twist; denote this twist \( \psi \). We may realise \( \psi \) as an ascent concordance, as described in \Cref{4Fig:movie}. The diagram labelled (1) is obtained from \( L \) via an isotopy, then:
\begin{enumerate}[align=left]
\item[(1) to (2):] Add an empty handle (pass an exceeding index \(2\) critical point).

\item[(2) to (3):] Slide the foot of the leftmost handle over the rightmost, via the red path.

\item[(3) to (4):] Destabilize along the blue curve.
\end{enumerate}
The diagram labeled (4) is isotopic to the diagram of \( L ' \) in \Cref{4Fig:links1}.
\begin{figure}[H]
\includegraphics[scale=0.45]{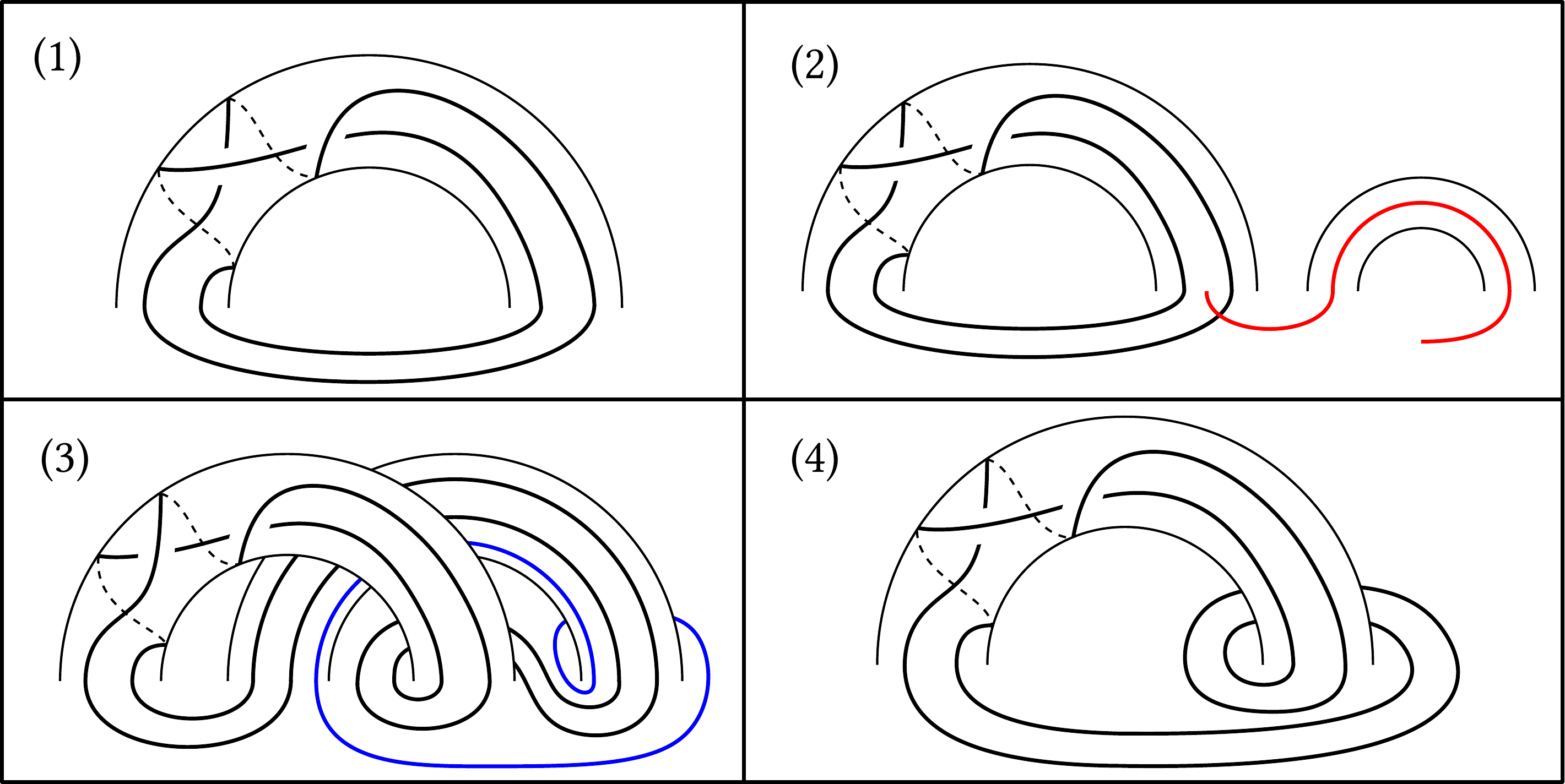}
\caption{Realizing a Dehn twist as a concordance.}
\label{4Fig:movie}
\end{figure}

As \( L \) and \( L' \) are both links in \( \Sigma_1 \times I \) we must obstruct their pseudostrict concordance in order to apply \Cref{4Thm:ascent}. At first glance the fact that \( L \) and \( L ' \) are related by a Dehn twist may lead one to attempt to show that they are strictly concordant, by making a particular choice of identification of the boundary of the target \(3 \)-manifold. However, as we are considering links in thickened surfaces up to isotopy only, such attempts fail.

For example, let \( D \looparrowright \Sigma_1 \) be the diagram of \( L \) given in \Cref{4Fig:links1}, and consider the cobordism \( ( S , M ) \) where \( S = D \times I \) and \( M = \Sigma_1 \times I \). Identify the boundary of \( M \) via \( \Psi : \partial M \longrightarrow \Sigma_1 \sqcup \Sigma_1 \) such that \( \Psi |_{\Sigma_1 \times \lbrace 0 \rbrace} \) is the identity and \( \Psi |_{\Sigma_1 \times \lbrace 1 \rbrace} = \psi \).

We have \( \partial S = D \sqcup \widetilde{D} \), where \( \widetilde{D} = \psi ( D ) \) is a diagram on \( \Sigma_1 \). As described in \Cref{2Subsec:lits}, two diagrams on surfaces may be compared only if the ambient surfaces are identical. This is a consequence of the fact that we consider links in thickened surfaces up to isotopy only. The diagrams \( D \) and \( \widetilde{D} \) appearing on the boundary of \( S \) do not have identical ambient space: the ambient space of \( \widetilde{D} \) is obtained from that of \( D \) via \( \psi \). It follows that, in order to compare \( D \) and \( \widetilde{D} \), we must apply \( \psi^{-1} \) to \( \widetilde{D} \). But \( \psi^{-1} ( \widetilde{D} ) = D \), so that \( ( S , M ) \) is a concordance from \( L \) to itself (as anticipated by the fact that \( S \) is a product cobordism). This argument applies \emph{mutatis mutandis} to other diagrams of \( L \) and to other boundary identifications.

We now obstruct the pseudostrict concordance of \( L \) and \( L' \). Notice that without loss of generality we may assume that a pseudostrict concordance does not contain index \( 0 \) or \( 3 \) critical points.
\begin{lemma}\label{4Prop:pseudostrict}
If there exists a pseudostrict concordance between two links, then there exist a pseudostrict concordance between them with only index \( 1 \) and \( 2 \) critical points.
\end{lemma}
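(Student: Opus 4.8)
The plan is to normalize the Morse function by standard handle cancellation, using that, as recorded above, an index $i$ critical point of $f$ is a $(3-i)$-handle in the handle decomposition of $M$ built on $\Sigma_{g_1}\times\{1\}$: we must remove the $0$-handles (index $3$) and the $3$-handles (index $0$) while keeping the cobordism pseudostrict and leaving $S$ undisturbed.

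First I would invoke the usual rearrangement theorem for Morse functions on cobordisms to assume the critical values occur in order of handle index, so that, reading from $L_1$, one meets first the index-$3$ critical points, then the index-$2$, then the index-$1$, then the index-$0$ ones. An index-$3$ critical point creates a new $2$-sphere component $C$ of the level surfaces. Pseudostrictness forces $C$ never to acquire genus: the only subsequent modifications of its component are trivial splittings (index-$1$ critical points, along curves that necessarily bound discs on $C$) and absorptions into another component by a tube (index-$2$ critical points attached to a disjoint component). Tracking $C$ forward, it (or one of its spherical descendants) is eventually absorbed by some index-$2$ critical point; after a handle slide this index-$3$/index-$2$ pair becomes a complementary $0$-handle/$1$-handle pair of $M$, which we cancel. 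Cancellation introduces no new critical points and leaves the attaching data of the remaining ones unchanged, so the cobordism stays pseudostrict; iterating removes every index-$3$ critical point. Applying the same argument to $1-f$ — which interchanges index $0$ and index $3$ and preserves the pseudostrictness conditions on the index-$1$ and index-$2$ points, since separating curves and disjoint-component tubes are unaffected — removes the index-$0$ critical points.

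All of the modifications involved (the rearranging isotopies, the handle slides, the cancellations) are supported in balls in $M$; after a preliminary general-position isotopy of $S$ rel $\partial$ inside $M\times I$ we may take these balls, crossed with the $I$-factor, disjoint from $S$ (the relevant spheres are inessential features of the level-surface movie, so the birth/death annuli they sweep out can be pushed off the annular cobordism $S$). Hence $S$ is untouched, and we obtain a pseudostrict concordance between the same two links with only index-$1$ and index-$2$ critical points. The delicate point — the combinatorial heart of the argument — is establishing that each sphere born at an index-$3$ critical point is genuinely absorbed by an adjacent index-$2$ critical point, rather than persisting unpaired to the terminal surface, together with the accompanying verification that the slide producing the complementary handle pair can be carried out in the complement of $S$.
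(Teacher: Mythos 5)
Your approach (rearrange the Morse function by index, pair each index-\(3\) critical point with an index-\(2\) critical point that absorbs the newborn sphere, cancel the resulting \(0\)-handle/\(1\)-handle pair of \(M\) in the complement of \(S\)) is genuinely different from the paper's, and as written it has a gap that you yourself flag. The claim that each sphere born at an index-\(3\) point is ``eventually absorbed by some index-\(2\) critical point'' is not established, and it is false in general: under pseudostrictness the sphere, or one of the spherical descendants created by index-\(1\) points along (automatically separating) curves on it, may instead be capped off by an index-\(0\) critical point, or persist to the terminal boundary when \(\Sigma_{g_2}\) has sphere components. In those cases there is no index-\(2\) partner and \(0\)/\(1\)-cancellation has nothing to act on; such configurations have to be handled differently (e.g.\ as closed or split pieces of \(M\) to be discarded after checking \(S\) cannot enter them), which is a separate argument you have not supplied. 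A further unexamined point: the preliminary rearrangement into index order requires sliding index-\(2\) points (\(1\)-handles) earlier past index-\(1\) points (\(2\)-handles attached along separating curves), which can place both feet of a \(1\)-handle on a single component and thus destroy the pseudostrictness you need to preserve.

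The second load-bearing assertion, that a general-position isotopy of \(S\) rel \(\partial\) pushes it off the balls (crossed with \(I\)) where the modifications take place, is also only asserted, and it is exactly the crux of the lemma: nothing prevents the annuli of \(S\) from running essentially through the region over the newborn sphere, and one cannot in general isotope them off a prescribed \(B^3\times I\). The paper's proof attacks precisely this point, in the opposite way: it uses pseudostrictness to observe that the sphere can only split into spheres, be tubed onto another component, or persist, so that whatever portions of \(S\) are supported over it can be \emph{re-embedded} over thickenings of disc neighbourhoods of the preceding level surface \(\Sigma_g\); one then deletes the index-\(3\) critical point together with all subsequent critical points interacting with it, and removes index-\(0\) points by reversing the cobordism. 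That is, the paper changes \(S\) and discards part of \(M\) rather than keeping \(S\) fixed and cancelling handle pairs. Your outline could perhaps be repaired along these lines, but as it stands the two steps you describe as ``the combinatorial heart'' are exactly the missing content.
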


\begin{proof}
Let \( ( S, M ) \) be a pseudostrict concordance. Traverse \( ( S , M ) \) from the initial link, until an index \( 3 \) or index \( 0 \) critical point is met. Suppose it is of index \( 3 \), and that \( \Sigma_g \) is the level surface preceding it: when passing such a critical point the level surface becomes \( S^2 \sqcup \Sigma_g \). As \( ( S, M ) \) is pseudostrict, in traversing the remainder of the concordance the \( S^2 \) component may only be attached to another level surface component, split into two disjoint copies of \( S^2 \), or remain unchanged. It follows that any regions of the cobordism surface \( S \) which require the presence of the index \( 3 \) critical point may equivalently be supported on (thickenings of ) disc neighbourhoods of \( \Sigma_g \). Produce a new pseudostrict concordance by reproducing such regions of \( S \) on a (thickenings of) disc neighbourhoods of \( \Sigma_g \), and removing the index \( 3 \) critical point (and any subsequent critical points interacting with it). Any remaining index \( 3 \) critical points may be removed by repeating this process. We may remove index \( 0 \) critical points by considering the reverse cobordism and repeating the above argument.
\end{proof}

Both \( L \) and \( L ' \) have \( \Sigma_1 \times I \) as ambient space; suppose a pseudostrict concordance between them, \( ( S , M ) \), contains an index \( 1 \) critical point which necessarily creates a disjoint \( S^2 \) component. As \( ( S , M ) \) is pseudostrict any regions of \( S \) which require the presence of this \( S^2 \) may be reproduced in (thickenings of) disc neighbourhoods of \( \Sigma_1 \), and by an argument similar to that given in the proof of \Cref{4Prop:pseudostrict} we may produce a new pseudostrict concordance without index \( 1 \) critical points. As this new concordance does not possess index \(0\), \(1\), or \(3\) critical points and \( \Sigma_1 \) is connected, it follows that it does not possess index \(2\) critical points between two distinct components. We conclude that if \( L \) and \( L ' \) are pseudostrictly concordant, then they are strictly concordant.

The links \( L \) and \( L ' \) are not strictly concordant as they are not homotopic in \( \Sigma_1 \times I \). This conclusion also makes use of the fact that the ambient spaces of the diagrams in \Cref{4Fig:links1} are identical. It follows that \( L \) and \( L ' \) are not pseudostrictly concordant, and by \Cref{4Thm:ascent} they are ascent concordant. One may produce an infinite family of ascent concordant links by iterating the Dehn twist.

\subsubsection{An elementary argument}\label{Sec:elem}
Although we used totally reduced homology to show that \( L \) and \( L ' \) are ascent concordant, there is an elementary method of doing so. We describe this method now, before presenting a pair of ascent links to which it cannot be applied.

Assume towards a contradiction that \( ( S , M ) \) is a descent concordance from \( L \) to \( L ' \) that is not pseudostrict. We may therefore decompose \( M \) as \( M = M_1 \cup_{F} M_2 \), where \( F \) is a disjoint union of \(2\)-spheres. The inclusion \( i : \Sigma_1 \rightarrow M_1 \) induces \( i_{\ast} : H_1 ( \Sigma_1 ; \mathbb{R} ) \rightarrow H_1 ( M_1 ; \mathbb{R} ) \) with \( \text{null} ( i_{\ast} ) = 1 \). Let \( S_1 = M_1 \cap S \); if \( \partial S_1 = L \sqcup \widetilde{L} \), then \( \widetilde{L} \hookrightarrow F \times I \) and \( [ \widetilde{L} ] = 0 \in H_1 ( F ; \mathbb{R} ) \) so that \( [ \widetilde{L} ] = 0 \in H_1 ( M ; \mathbb{R} ) \) also. As \( \partial S_1 = L \sqcup \widetilde{L} \), it follows that \( [ L ] = [ \widetilde{L} ] = 0 \in H_1 ( M ; \mathbb{R} ) \). But the components of \( L \) span a rank \( 2 \) subspace of \( H_1 ( \Sigma_1 ; \mathbb{R} ) \), so that \( 2 \leq \text{null} ( i_{\ast} ) \leq 1 \), a contradiction. As we have obstructed the pseudostrict concordance of \( L \) and \( L ' \), the argument above is enough to show that they are ascent concordant.

\subsubsection{Second family}\label{Sec:2fam}
\begin{figure}
\includegraphics[scale=0.6]{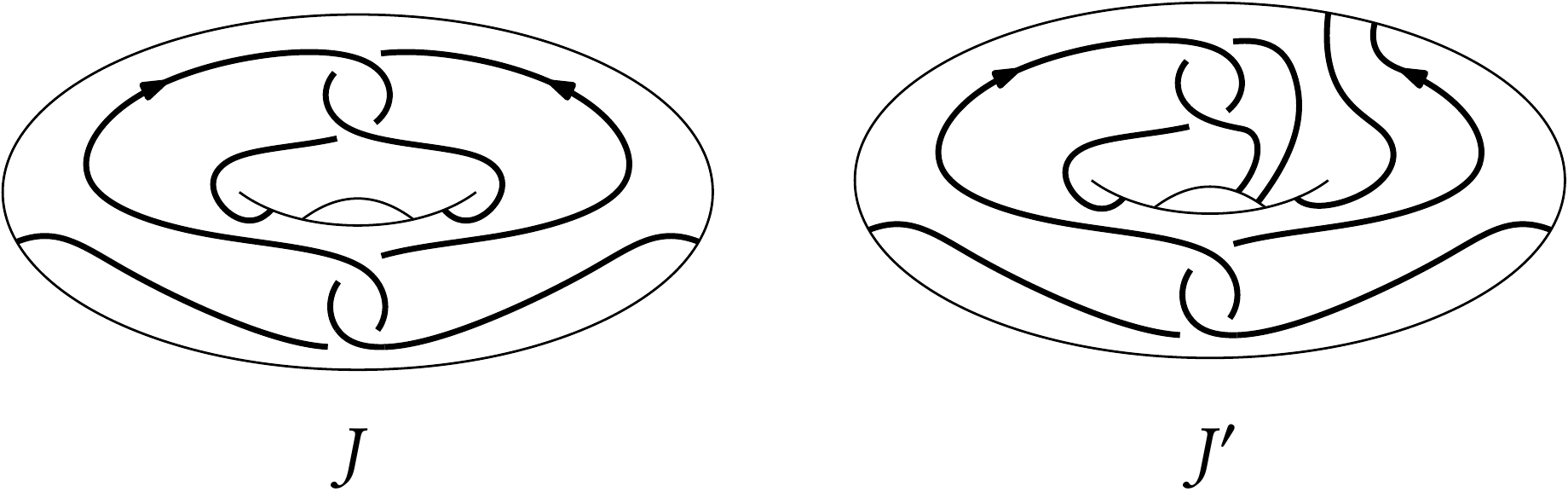}
\caption{A pair of ascent concordant links not amenable to the elementary method.}
\label{4Fig:links2}
\end{figure}

We now exhibit a pair of ascent concordance links to which the method of \Cref{Sec:elem} cannot be applied. Consider the links \( J \) and \( J ' \) depicted in \Cref{4Fig:links2}. The concordance described in \Cref{4Fig:movie} can be easily modified to show that \( J \) and \( J' \) are concordant.

The link \( J \) is totally nontrivial: \( \dkh '' ( J , \alpha ) \) and \( \dkh '' ( J , \beta ) \) have generators of \( ( j , c ) \)-bidegree \( ( -2, 1 ) \), and \( \dkh '' ( J , \alpha + \beta ) \) has a generator of bidegree \( ( -8, -2 ) \). The pseudostrict concordance of \( J \) and \( J ' \) may be obstructed exactly as in the case of \( L \) and \( L ' \) above.

Notice that the components of \( J \) span a rank \( 1 \) subspace of \( H_1 ( \Sigma_1 ; \mathbb{R} ) \), so that the method given in \Cref{Sec:elem} does not obstruct their descent concordance. However, \Cref{4Thm:ascent} may be applied to obstruct their descent concordance, so that \( J \) and \( J ' \) are ascent concordant. As with \( L \) and \( L ' \), one may produce an infinite family of ascent concordant links by iterating the Dehn twist on \( J \).

That \( J \) and \( J ' \) are ascent concordant reveals a subtly regarding concordance depicted in \Cref{4Fig:movie}\label{4Page:handles}; denote it by \( (S,M) \). The \(3\)-manifold \( M \) is equal to \( \Sigma_1 \times I \) in a non-minimal handle decomposition. To see this, notice that the the attaching sphere of the \( 2 \)-handle (the blue curve in the third panel) intersects the belt sphere of the \(1\)-handle added in second panel exactly once. These handles therefore form a cancelling pair. In turn, the induced decomposition of \( M \times I \) contains a cancelling pair of handles. In the cobordism described in \Cref{4Fig:movie}, \( S \) is a disjoint union of two annuli. The result that \( J \) and \( J ' \) are ascent concordant implies that the cancelling pair of handles of \( M \times I \) cannot be cancelled in the complement of (a neighbourhood) of \( S \). If they could, this would yield a new concordance from \( J \) to \( J ' \), \( (S', M')\), in which \( M ' \) is identically equal to \( \Sigma_1 \times I \). Such a concordance would not contain an exceeding critical point, contradicting the fact that \( J \) and \( J' \) are ascent concordant. Thus the non-minimal handle decomposition of \( M \times I \) cannot be simplified once \( S \) has been embedded.

In summary, a pair of cancelling handles are added to \( M = \Sigma_1 \times I \), and the disjoint union of annuli \( S \) is embedded into \( M \times I \). As \( J \) and \( J'\) are ascent concordant, the cancelling handles of \( M \times I\) cannot be cancelled in the complement of \( S \). The totally reduced homology is used to prove the ascent concordance of \( J \) and \( J '\); in light of the discussion above, we may interpret this result as an application of link homology to the study of knotted surfaces. In particular, the result demonstrates that the totally reduced homology is able to detect the subtle fact that a non-minimal handle decomposition of \(M \times I \) is required to realise a concordance from \( J \) to \( J ' \). This is evidence that the totally reduced homology contains interesting information regarding the topology of the complements of knotted surfaces.

\subsubsection{More examples}\label{Sec:evenmore}
\begin{figure}
\includegraphics[scale=0.6]{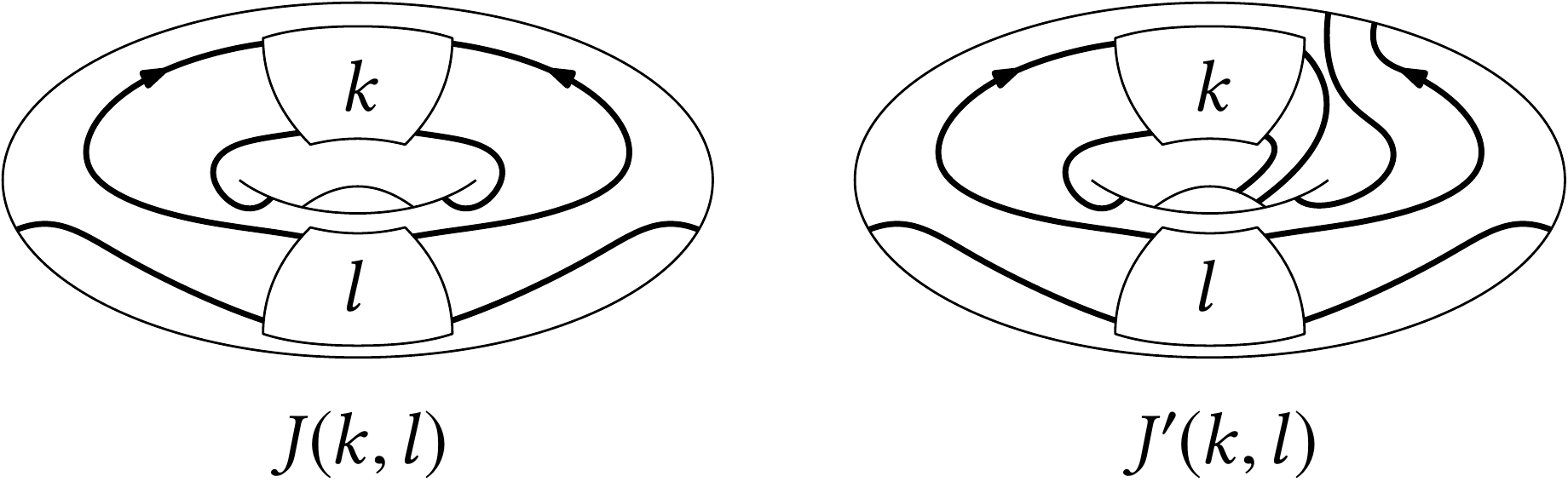}
\caption{The links \( J ( k ,l ) \) and \( J ' ( k ,l ) \). A box labelled \( i \) denotes \( i \) full twists.}
\label{4Fig:links3}
\end{figure}

We conclude by presenting more examples of ascent concordant links, that are not amenable to the method of \Cref{Sec:elem} (or an appropriate generalization of it). First, consider the links depicted in \Cref{4Fig:links3}: a box labelled \( i \) denotes \( i \in \Z \) full twists so that \( J ( -1 , -1 ) = J \), for \( J \) given in \Cref{4Fig:links2}. Suppose that \( k , l > 0 \). In this case \( \dkh '' ( J ( k ,l ) , \alpha ) \) and \( \dkh '' ( J ( k ,l ) , \alpha + \beta ) \) both have a generator of \( ( j , c ) \)-bidegree \( ( 0, 4-k-l ) \), and \( \dkh '' ( J ( k ,l ) , \beta ) \) has a generator of bidegree \( ( k + l - 2, 1 + \frac{1}{2} ( k + l ) \). Thus \( J ( k ,l ) \) is totally nontrivial for \( k + l \neq 4 \).

It follows that \Cref{4Thm:ascent} and the discussion in \Cref{Sec:1fam,Sec:2fam} also apply to prove that \( J ( k ,l ) \) and \( J ' ( k ,l ) \) are ascent concordant. Notice that the components of \( J ( k ,l ) \) span a rank \( 1 \) subspace of \( H_1 ( \Sigma_1 ; \mathbb{R} ) \) so that the method of \Cref{Sec:elem} does not apply.

\begin{figure}
\includegraphics[scale=0.625]{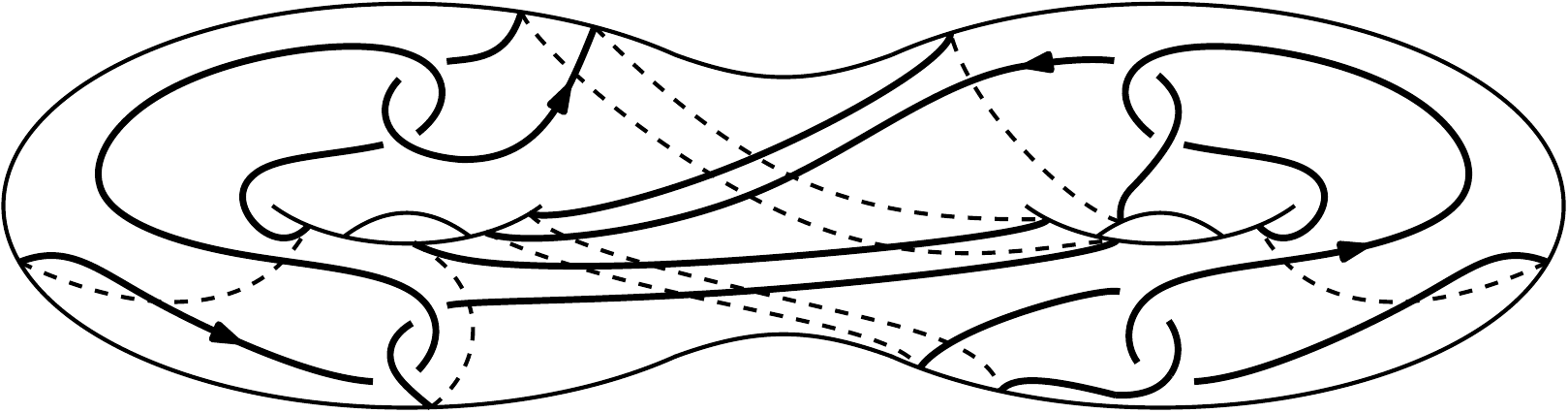}
\caption{A totally nontrivial link on a genus \(2\) surface.}
\label{4Fig:links4}
\end{figure}

Finally, consider the link depicted in \Cref{4Fig:links4}. This link is totally nontrivial so that performing Dehn twists, and appealing to \Cref{4Thm:ascent} and the discussion in \Cref{Sec:1fam,Sec:2fam}, one can produce a number of pairs of ascent concordant links. The components of this link span a rank \(2\) subspace of \( H_1 ( \Sigma_2 ; \mathbb{R} ) \), so that an appropriate generalization of the method given in \Cref{Sec:elem} cannot be used to obtain these ascent concordant pairs.

\bibliographystyle{plain}
\bibliography{library}

\begin{thebibliography}{10}

\bibitem{Asaeda2004}
M.~M. Asaeda, J.~H. Przytycki, and A.~S. Sikora.
\newblock Categorification of the {K}auffman bracket skein module of
  {$I$}-bundles over surfaces.
\newblock {\em Algebr. Geom. Topol.}, 4:1177--1210, 2004.

\bibitem{Baker16}
Kenneth~L. Baker.
\newblock A note on the concordance of fibered knots.
\newblock {\em J. Topol.}, 9(1):1--4, 2016.

\bibitem{Bar-Natan2006}
D.~Bar-Natan and S.~Morrison.
\newblock {The Karoubi envelope and Lee's degeneration of Khovanov homology}.
\newblock {\em Algebr. Geom. Topol.}, 6(3):1459--1469, 2006.

\bibitem{Boden2016}
H.~Boden and M.~Nagel.
\newblock {Concordance group of virtual knots}.
\newblock {\em Proc. Amer. Math. Soc.}, 145(12):5451--5461, 2017.

\bibitem{Dye2014}
H.~A. Dye, A.~Kaestner, and L.~H. Kauffman.
\newblock {Khovanov homology, Lee homology and a Rasmussen invariant for
  virtual knots}.
\newblock {\em J. Knot Theory Ramifications}, 26(03):1741001, 2017.

\bibitem{Gordon1981}
C.~McA. Gordon.
\newblock Ribbon concordance of knots in the {$3$}-sphere.
\newblock {\em Math. Ann.}, 257(2):157--170, 1981.

\bibitem{Hayden19}
K.~Hayden.
\newblock Cross-sections of unknotted ribbon disks and algebraic curves.
\newblock {\em Compos. Math.}, 155(2):413--423, 2019.

\bibitem{Kauffman1998c}
L.~H. Kauffman.
\newblock {Virtual knot cobordism}.
\newblock In {\em New ideas in low dimensional topology}, pages 335--377. World
  Scientific, 2015.

\bibitem{Kauffman2012hard}
L.~H. Kauffman and S.~Lambropoulou.
\newblock {Hard unknots and collapsing tangles}.
\newblock In {\em Introductory Lectures On Knot Theory: Selected Lectures
  Presented at the Advanced School and Conference on Knot Theory and Its
  Applications to Physics and Biology}, pages 187--247. World Scientific, 2012.

\bibitem{Kronheimer2019b}
P.~B. Kronheimer and T.~S. Mrowka.
\newblock A deformation of instanton homology for webs.
\newblock {\em Geom. Topol.}, 23(3):1491--1547, 2019.

\bibitem{Kronheimer2019}
P.~B. Kronheimer and T.~S. Mrowka.
\newblock Tait colorings, and an instanton homology for webs and foams.
\newblock {\em J. Eur. Math. Soc.}, 21(1):55--119, 2019.

\bibitem{Lee2005}
E.~S. Lee.
\newblock {An endomorphism of the Khovanov invariant}.
\newblock {\em Adv. Math.}, 197(2):554--586, 2005.

\bibitem{Levine19}
Adam~Simon Levine and Ian Zemke.
\newblock Khovanov homology and ribbon concordances.
\newblock {\em Bull. Lond. Math. Soc.}, 51(6):1099--1103, 2019.

\bibitem{LivingstonNaik}
C.~Livingston and S.~Naik.
\newblock {\em Introduction to knot concordance}.
\newblock \url{http://php.indiana.edu/~livingst/Book.pdf}.

\bibitem{Manturov2006}
V.~O. Manturov.
\newblock {Khovanov homology for virtual links with arbitrary coefficients}.
\newblock {\em J. Knot Theory Ramifications}, 16(03):343--377, 2007.

\bibitem{Rushworth2017b}
V.~O. Manturov and W.~Rushworth.
\newblock {Additional gradings on generalisations of Khovanov homology and
  invariants of embedded surfaces}.
\newblock {\em J. Knot Theory Ramifications}, 27(09):1842001, 2017.

\bibitem{Miyazaki98}
K.~Miyazaki.
\newblock Band-sums are ribbon concordant to the connected sum.
\newblock {\em Proc. Amer. Math. Soc.}, 126(11):3401--3406, 1998.

\bibitem{Miyazaki18}
K.~Miyazaki.
\newblock A note on genera of band sums that are fibered.
\newblock {\em J. Knot Theory Ramifications}, 27(12):1871002, 3, 2018.

\bibitem{Wedrich18}
H.~Queffelec and P.~Wedrich.
\newblock Khovanov homology and categorification of skein modules, 2018.
\newblock \url{arXiv.org/abs/1806.03416}.

\bibitem{Rasmussen2010}
J.~Rasmussen.
\newblock {Khovanov homology and the slice genus}.
\newblock {\em Invent. Math.}, 182(2):419--447, 2010.

\bibitem{Rushworth2017}
W.~Rushworth.
\newblock {Doubled Khovanov Homology}.
\newblock {\em Canad. J. Math.}, 70:1130--1172, 2018.

\bibitem{Tubbenhauer2014a}
D.~Tubbenhauer.
\newblock {Virtual Khovanov homology using cobordisms}.
\newblock {\em J. Knot Theory Ramifications}, 23(09):1450046, 2014.

\bibitem{TuraevCob}
V.~Turaev.
\newblock Cobordism of knots on surfaces.
\newblock {\em J. Topol.}, 1(2):285--305, 2008.

\bibitem{Turaev2006}
V.~Turaev and P.~Turner.
\newblock {Unoriented topological quantum field theory and link homology}.
\newblock {\em Algebr. Geom. Topol.}, 6(3):1069--1093, 2006.

\bibitem{Zemke19}
Ian Zemke.
\newblock Knot {F}loer homology obstructs ribbon concordance.
\newblock {\em Ann. of Math. (2)}, 190(3):931--947, 2019.

\end{thebibliography}

\end{document}